\numberwithin{equation}{section}
\setlist[enumerate]{label=(\alph*)}
\setlist[itemize]{leftmargin=20pt}
\theoremstyle{plain}
\newtheorem{theorem}{Theorem}[subsection]
\newtheorem{proposition}[theorem]{Proposition}
\newtheorem{conjecture}{Conjecture}[section]
\newtheorem{lemma}[theorem]{Lemma}
\newtheorem{corollary}[theorem]{Corollary}
\theoremstyle{definition}
\newtheorem{definition}[theorem]{Definition}
\newtheorem*{notation}{Notation}
\newtheorem{remark}[theorem]{Remark}
\newtheorem{example}[theorem]{Example}
\newcommand{\THH}{\operatorname{THH}}
\newcommand{\TC}{\operatorname{TC}}
\newcommand{\Ext}{\operatorname{Ext}}
\newcommand{\Tor}{\operatorname{Tor}}
\newcommand{\BP}{\mathrm{BP}}
\newcommand{\MU}{\mathrm{MU}}
\newcommand{\KU}{\mathrm{KU}}
\newcommand{\ku}{\mathrm{ku}}
\newcommand{\Id}{\operatorname{id}}
\newcommand\rightthreearrow{%
        \mathrel{\vcenter{\mathsurround0pt
                \ialign{##\crcr
                        \noalign{\nointerlineskip}$\rightarrow$\crcr
                        \noalign{\nointerlineskip}$\rightarrow$\crcr
                        \noalign{\nointerlineskip}$\rightarrow$\crcr
                }%
        }}%
}
\newcommand{\Fun}{\operatorname{Fun}}
\newcommand{\Sp}{\operatorname{Sp}}
\newcommand{\Tot}{\operatorname{Tot}}
\newcommand{\val}{\operatorname{val}}
\newcommand{\CB}{\operatorname{CB}}
\newcommand{\gr}{\operatorname{gr}}
\newcommand{\Eq}{\operatorname{Eq}}
\newcommand{\ol}{\overline}
\title[Integral topological Hochschild homology of $\ku$]{Integral topological Hochschild homology\\of connective complex K-theory}
\author{David Jongwon Lee}
\address{Department of Mathematics, Northwestern University, Evanston, IL, USA}
\email{davidlee@northwestern.edu}
\begin{document}
    \begin{abstract}
        We compute the homotopy groups of $\mathrm{THH}(\mathrm{ku})$ as a $\mathrm{ku}_\ast$-module using the descent spectral sequence for the map $\mathrm{THH}(\mathrm{ku})\to\mathrm{THH}(\mathrm{ku}/\mathrm{MU})$, which is the motivic spectral sequence for $\mathrm{THH}(\mathrm{ku})$ in the sense of Hahn--Raksit--Wilson. We reduce the computation of homotopy groups to the algebra of the universal formal group law, providing a systematic way to compute THH of quotients of MU. We compute the $E_2$-page of the motivic spectral sequence computing $\THH(\ku)$, and we show that it degenerates at the $E_2$-page.
        
    \end{abstract}
    \maketitle
    \tableofcontents
	\section{Introduction}
	\subsection{Motivation and previous works}
	Topological Hochschild homology of a ring spectrum $R$, defined as
	\[
	    \THH(R) = R\otimes_{R\otimes R^{op}}R
	\]
	is an object of interest largely due to its connection to the algebraic K-theory $K(R)$. The Dundas-Goodwillie-McCarthy theorem \cite[Thm. 7.2.2.1]{DGM} states that the difference between topological cyclic homology $\TC(R)$ and $K$-theory $K(R)$ is locally constant, therefore providing a way to compute $K(R)$. Computing $\THH(R)$ is the very first step towards computing $\TC(R)$. See \cite{NS} for a modern construction of $\TC$ from $\THH$.
	
	In this paper, we compute the homotopy groups of $\THH(\ku)$, where $\ku=\tau_{\geq0}\KU$ is the connective cover of the topological K-theory spectrum, in the hope that this would eventually lead to the computation of $K(\ku)$.
	
	The interest in $K(\ku)$ comes from the chromatic redshift philosophy which roughly suggests that the algebraic $K$ theory of a ``chromatic height $n$ theory'' is of ``chromatic height $n+1$''. In this perspective, Ausoni proved that $K(\ku)$ is of chromatic height $2$ in a suitable sense \cite{A-Kku}. Also, according to \cite{BDRR}, the cohomology theory $K(\ku)$ has a geometric meaning in terms of $2$-vector bundles.

    While the primary focus of this paper is $\THH(\ku)$, whose integral homotopy groups were previously unknown, we expect that the methods of this paper are more widely applicable and reduce the computation of $\THH$ of quotients of $\MU$ to mostly algebraic problems of formal group laws. In particular, using this method, it is also possible to recover the previously known homotopy groups of $\THH$ of $\mathbb F_p,\mathbb Z$, and $\ell$ (the Adams summand) and we hope that the method can be used to compute $\THH$ of truncated Brown--Peterson spectra $\BP\langle n\rangle$ in general. See also Section \ref{sec:FQ}.

    \begin{remark}
        The integral homotopy groups of $\THH(\ell)$ were computed by Angeltveit, Hill, and Lawson in \cite{AHL}. Their argument is topological in nature, depending on the fact that the unit map $\ell\to\THH(\ell)$ is a $K(1)$-local equivalence and, as a result, that the mod $p$ homotopy groups of $\THH(\ell)$ are sparse (\cite{MS}, \cite{AR-Hopf}). In this paper, we try to eliminate the use of these topological inputs or any sparsity arguments and reduce the computation of homotopy groups to the algebra of the universal formal group law, so that the method can be generalized to higher heights more easily. 

        We also note that, by \cite[Theorem 1.5]{A-THHku}, one can recover the homotopy groups of $\THH(\ell)$ by taking certain $\mathbb F_p^\times$-fixed points of the homotopy groups of $\THH(\ku)_p^\wedge$.
    \end{remark}

    \begin{remark}
The spectral sequence to be used in the paper is the descent spectral sequence for $\THH(\ku)\to\THH(\ku/\MU)$. This idea is from the work \cite{HW} of Hahn and Wilson, in which they descend along $\THH(\BP\langle n\rangle)\to\THH(\BP\langle n\rangle/\MU)$ to analyze $\TC(\BP\langle n\rangle)$. We shall see that our descent cover $\THH(\ku/\MU)$ has homotopy groups concentrated in even degrees (Prop. \ref{prop:hopf}). Computations of $\THH$ by descending from \emph{even rings} have successfully been carried out by many authors for $\THH$ of ring of integers of $p$-adic number fields and their quotient rings (\cite{BMS2}, \cite{KN}, \cite{LW}). The relation between these works and our work can be explained by the notion of \emph{even/motivic filtration} in the work \cite{HRW} of Jeremy Hahn, Arpon Raksit, and Dylan Wilson. See also Remark \ref{rmk:evenf}. In particular, this paper shows that even for non-discrete rings, motivic spectral sequences are very powerful for computations.
    \end{remark}

    \begin{remark}
        The homotopy groups of various quotients of $\THH(\ku)$ were computed by Ausoni in \cite{A-THHku}. We recover many of them in the course of our computation.

        We also note that several computations related to $\THH(\BP\langle n\rangle)$ have recently been made in the work \cite{AKCH} of Angelini--Knoll, Culver, and Höning.
    \end{remark}
 
	\subsection{Main Results}\label{ssec:main}
	    To describe $\THH_\ast(\ku)$ as a $\ku_\ast=\mathbb Z[\beta]$-module with $|\beta|=2$, we need to define two graded $\mathbb Z[\beta]$-modules $F$ and $T$.
        The graded $\mathbb Z[\beta]$-module $F$ is defined to be
        \[
            F:=\Sigma^3 \left(\mathbb Z\left\{\frac{\beta^k}{f(k)};k\geq 0\right\}\right)\subseteq\Sigma^3(\mathbb Q[\beta])
        \]
        where $f(k)$ is a sequence defined as
        \begin{align*}
            f(0)&=1\\
            f(k) &= \begin{cases}
                pf(k-1)&\text{if $k+2=p^m$ for some prime $p$}\\
                f(k-1)&\text{otherwise.}
            \end{cases}
        \end{align*}
        The graded module $T$ is defined to be the direct sum
        \[
            T := \bigoplus_{p:\mathrm{prime}} T(p)
        \]
        where $T(p)$ is a torsion $\mathbb Z_p[\beta]$-module described below.
        \begin{theorem}\label{thm:main}
            There is an isomorphism
            \[
                \THH_\ast(\ku) = \mathbb Z[\beta]\oplus F\oplus T
            \]
            as $\mathbb Z[\beta]$-modules.
        \end{theorem}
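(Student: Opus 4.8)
The plan is to reduce everything to the relative theory $\THH(\ku/\MU)$ and recover $\THH(\ku)$ by descent. First I would compute $\THH_\ast(\ku/\MU)$. Since $\ku$ is complex orientable, the orientation $\MU\to\ku$ exhibits $\ku_\ast=\mathbb Z[\beta]$ as a regular quotient of $\MU_\ast$, and a Hochschild--Kostant--Rosenberg-type computation then identifies $\THH_\ast(\ku/\MU)$ as a free $\ku_\ast$-module whose primary new generator is the class $\sigma\beta$ in degree $3$. This single degree-$3$ generator is what eventually produces the shift $\Sigma^3$ in the description of $F$.

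Next I would set up the descent spectral sequence attached to the map $\THH(\ku)\to\THH(\ku/\MU)$. As $\MU$ is descendable, $\THH(\ku)$ is recovered as the totalization of the cosimplicial spectrum obtained by smashing the relative theory against the cobar resolution of $\MU$, and the associated spectral sequence has the form
\[
    E_2(\THH(\ku))\Rightarrow\THH_\ast(\ku),
\]
with $E_2$-page a cobar-type cohomology of $\THH_\ast(\ku/\MU)$ as a comodule, and likewise with coefficients $M=\mathbb F_p,\mathbb Z_p,\ku_p^\wedge$. Rather than compute this $E_2$-page by brute force, I would determine it one coefficient system at a time. The mod-$p$ page $E_2(\THH(\ku;\mathbb F_p))$ is an exterior/divided-power algebra and is directly computable; I would then pass along the Bockstein spectral sequences
\[
    E_2(\THH(\ku;\mathbb F_p))\Rightarrow E_2(\THH(\ku;\mathbb Z_p))\Rightarrow E_2(\THH(\ku)_p^\wedge)
\]
whose differentials are exactly those recorded in Theorem \ref{thm:bbstn}. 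These differentials carry all of the arithmetic: the $p$-adic divisibilities they impose on the divided-power classes in odd degrees assemble, prime by prime, into the denominators $f(k)$ cutting out the lattice $F\subseteq\Sigma^3\mathbb Q[\beta]$, while their cokernels produce exactly the $p$-power torsion modules $T(p)$.

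With $E_2$ in hand I would prove that the descent spectral sequence degenerates there. The efficient route combines multiplicativity with a degree/sparseness argument: the algebra generators of $E_2$ are permanent cycles because they are already detected in $\THH(\ku/\MU)$ via the edge homomorphism, every other class is a product of these, and the known rationalization leaves no room for a nonzero differential respecting the module structure. I would then reassemble the $p$-complete answers into the integral statement by arithmetic fracture: the torsion-free summand is pinned down by its rationalization $\Sigma^3\mathbb Q[\beta]$ together with the $p$-local lattices dictated by the $f(k)$, giving $F$; the $p$-power torsion glues to $T=\bigoplus_p T(p)$; and the unit $\ku_\ast=\mathbb Z[\beta]$ splits off as the bottom summand.

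The main obstacle is twofold. The delicate computational heart is Theorem \ref{thm:bbstn}: one must determine not merely which classes support Bockstein differentials but their precise $p$-adic valuations, since it is exactly this information that forces the nonuniform recursion defining $f(k)$ and the exact orders of the torsion in $T(p)$. The second difficulty is excluding hidden extensions and higher differentials in the descent spectral sequence; in particular one must rule out any differential mixing the torsion-free and torsion classes, for this is what guarantees that the answer genuinely splits as $\mathbb Z[\beta]\oplus F\oplus T$ rather than assembling into a nontrivial extension.
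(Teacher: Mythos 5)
Your skeleton — descent along $\THH(\ku)\to\THH(\ku/\MU)$, mod-$p$ input, the two Bockstein spectral sequences carrying the arithmetic of $f(k)$ and $T(p)$, then arithmetic fracture — is the paper's strategy, but two of your steps are genuinely wrong as stated. First, $\THH_\ast(\ku/\MU)$ has no class ``$\sigma\beta$'' and no degree-$3$ generator: $\beta$ is the image of $x_1\in\MU_\ast$ and \emph{survives} to $\ku_\ast$, so it is not in the fiber of $\MU_\ast\to\ku_\ast$ and admits no suspension class. The relative theory is the divided power algebra $\Gamma_{\ku_\ast}[\sigma^2x_2,\sigma^2x_3,\dots]$ on even-degree generators (degrees $6,8,\dots$), and its evenness is the linchpin of the whole argument. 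The degree-$3$ class generating the bottom of $F$ is $\sigma b_1$ with $b_1\in\ku_\ast\ku$, and it lives in filtration $1$ of the descent spectral sequence, detected by the cocycle $\sigma^2b_1\in E_2^{1,4}$ (Remark \ref{rmk:desus}), not in filtration $0$. This error propagates into your degeneration argument: the classes constituting $F$ and $T$ sit in filtrations $1$ and $2$, so by definition they are \emph{not} detected by the edge homomorphism to $\THH_\ast(\ku/\MU)$, nor are they products of filtration-zero classes, so multiplicativity plus the edge map proves nothing about them; and rationalization cannot exclude differentials involving torsion classes. The paper's degeneration (Corollary \ref{cor:deg}) is pure sparseness: $E_2^{s,t}$ vanishes unless $0\leq s\leq 2$ and $t$ is even, while $d_r$ has bidegree $(r,r-1)$, so a nonzero differential would need $r$ odd (to preserve evenness of $t$) and $r\geq 2$, forcing the target filtration $s+r\geq 3$ to be empty. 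That sparseness statement in turn rests on the evenness of $\THH_\ast(\ku/\MU)$ and $\Gamma_\ast$ — precisely what your ``$\sigma\beta$ in degree $3$'' claim would destroy.

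Second, you treat all primes uniformly, but the paper's Bockstein machinery — Ausoni's computation of $\THH_\ast(\ku;\mathbb F_p)$ in the form used in Proposition \ref{prop:mainp}, the choice of generators and coaction formulas of Lemma \ref{lem:coaction}, and the differentials of Theorem \ref{thm:bbstn} — is developed only for odd $p$, and the answer at $2$ genuinely differs: $T(2)$ is a proper subquotient of $T_1(2)$, not $T_1(2)$ itself. The paper closes this gap not by its own method but by citing \cite{AHL}, using $\ku_2^\wedge=\ell_2^\wedge$. A blind ``prime by prime'' assembly therefore has a hole at $p=2$ that must be filled either by redoing the computation there or by invoking the known $\ell$ result. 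Finally, note that to run the fracture argument in odd degrees you must identify the bottom class of each lattice $F_p^\wedge\subseteq\Sigma^3\mathbb Q[\beta]$ with the \emph{same} integral class $\sigma b_1$ at every prime; the paper gets this from Remark \ref{rmk:desus} together with the proof of Theorem \ref{thm:mainp}, and it is exactly the point that is obscured if one misplaces $\sigma b_1$ in filtration $0$.
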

        
        Let us describe the torsion $\mathbb Z_p[\beta]$-module $T(p)$ for each prime $p$. We first define a $\mathbb Z_p[\beta]$-module $T_1(p)$ using generators $h_{m,j}$ of degree $2pm+2$ where $m$ varies over positive integers and $j$ is a nonnegative integer such that $0\leq j\leq \val_p(m)$. If we write $d=\val_p(m),d'=\val_p(m-p^d(p-1))$, the relations are
        \[
        \beta^{p^{d-j+1}-2}h_{m,j}=0
        \]
        and
	    \begin{align*}
	        ph_{m,j} =\begin{cases}
	        h_{m,j+1} + \beta^{p^{d+2}-p^{d+1}} h_{m-p^d(p-1), d'-d-1}&\text{if $j=0$, $m>p^d(p-1)$,}\\
	        &\text{and $d'>d$,}\\
	        h_{m,j+1}&\text{otherwise}.
	        \end{cases}
	    \end{align*}
	    If $j=\val_p(m)$, then the $h_{m,j+1}$'s on the right-hand side should read zero. Then, when $p$ is odd, we define $T(p)$ as $T_1(p)$, and if $p=2$,  then $T(2)$ is defined to be the subquotient of $T_1(2)$ generated by the elements of the form $h_{2m,j}$ with additional relations $h_{2m,\val_2(2m)}=0$ for all positive integers $m$.
        
        Figure \ref{fig:T3} shows the associated graded group of $T(3)$ with respect to the $(3,\beta)$-adic filtration in a range of degrees. The generators $h_{m,0}$ for $6\leq m\leq 17$ are detected by the dots in the bottom row. Compare this with \cite[Figure 1]{AHL}.
        
        \begin{figure}[htbp]
            \centering
            \includegraphics[clip, width=\textwidth, trim=2.1cm 17.6cm 2.1cm 1cm]{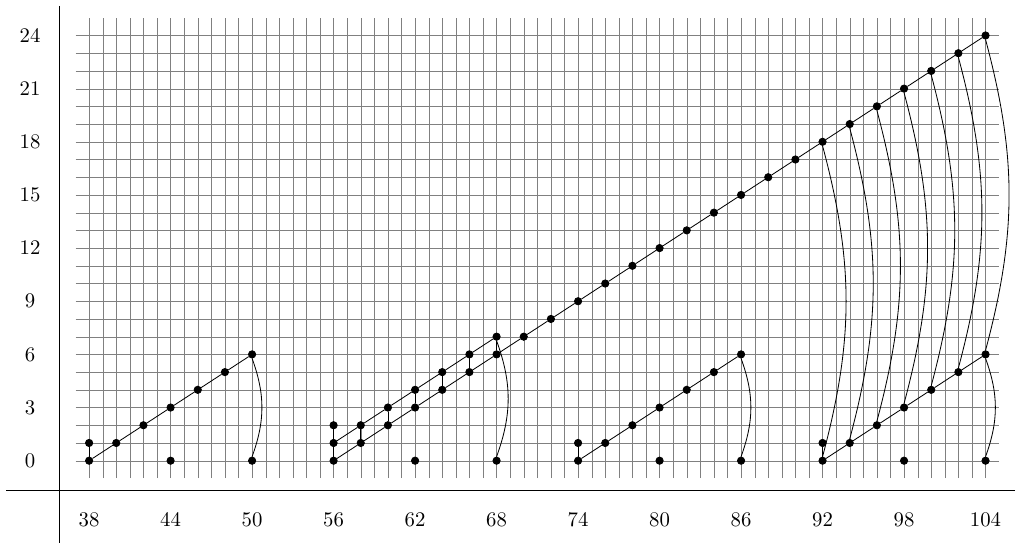}
            \caption{$T(3)$ in degrees $38$ to $104$.}\label{fig:T3}
        \end{figure}
        
        We shall prove Theorem \ref{thm:main} by first computing the $p$-completed homotopy groups of $\THH(\ku)$ for each prime $p$ and then applying the arithmetic fracture square.
        
        For $p=2$, we have $\ku_2^\wedge=\ell_2^\wedge$ so that
        \[
            \THH_\ast(\ku)_2^{\wedge} = \mathbb Z_2[\beta]\oplus F_2^{\wedge}\oplus T(2)
        \]
        by \cite[Thm. 2.6]{AHL}. More precisely, for a positive integer $m$, let
        \[
            m = a_0 2^n+\cdots+a_k 2^{n-k}
        \]
        be the $2$-adic representation of $m$ with $a_1,\dots,a_{k-1}\in\{0,1\}$ and $a_0=a_k=1$. Then, the corresponding element to $h_{2m,j}\in T(2)$ in the notation of \cite[Thm. 2.8]{AHL} is $g_w\in \Sigma^{2a_0p^{n+2}+2(p-1)}T_n$ where $w$ is the string $(a_1,\dots,a_k,0,\dots,0)$ with $j$ trailing zeros.
        
        When $p$ is odd, we shall prove the following theorem in this paper.
	    
	    \begin{theorem}\label{thm:mainp}
	        Let $p$ be an odd prime. The $E_2$-page of the descent spectral sequence for $\THH(\ku)_p^\wedge\to\THH(\ku/\MU)_p^\wedge$ has the following description as graded $(\ku_p^\wedge)_\ast = \mathbb Z_p[\beta]$-modules:
	        \begin{align*}
	            E_2^{0,\ast} &= \mathbb Z_p[\beta]\\
	            E_2^{1,\ast} &= \Sigma F_p^\wedge\\
	            E_2^{2,\ast} &= \Sigma^2 T(p)\\
	            E_2^{r,\ast} &=0\quad\text{if $r\neq0,1,2$}.
	        \end{align*}
	        The spectral sequence degenerates at the $E_2$-page and we have an isomorphism
	        \[
	            \THH_\ast(\ku)_p^\wedge = \mathbb Z_p[\beta]\oplus F_p^\wedge\oplus T(p)
	        \]
	        as $\mathbb Z_p[\beta]$-modules.
	    \end{theorem}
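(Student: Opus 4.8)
The plan is to construct the descent spectral sequence, identify its $E_2$-page as a $\operatorname{Cotor}$ computation fed by the relative theory $\THH(\ku/\MU)$, compute that $E_2$-page through the algebraic Bockstein spectral sequences of the introduction, and then deduce collapse from the three-column shape. I would first record the relative input. Since $\ku$ is a complex-orientable $\MU$-algebra and $\ku_\ast=\mathbb Z[\beta]$ is the quotient of $\MU_\ast$ by a regular sequence (every polynomial generator except the one in degree $2$), a Koszul computation gives $\THH_\ast(\ku/\MU)\cong\ku_\ast\otimes\Gamma[\phi_2,\phi_3,\dots]$, the free $\ku_\ast$-module that is the divided-power algebra on classes $\phi_i$ of degree $2i+2$. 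Using the base-change equivalence $\THH(\ku/\MU)\simeq\THH(\ku)\otimes_{\THH(\MU)}\MU$ and the $p$-adic convergence of $\MU$-based descent (as in the $\BP\langle n\rangle$ computations of \cite{AKCH}), the map $\THH(\ku)\to\THH(\ku/\MU)$ sits at the foot of a cobar tower with totalization $\THH(\ku)_p^\wedge$, whose spectral sequence has $E_2^{s,t}=\operatorname{Cotor}^{s,t}_{C}(\MU_\ast,\THH_\ast(\ku/\MU))$ for the divided-power coalgebra $C=\pi_\ast(\MU\otimes_{\THH(\MU)}\MU)$, converging to $\THH_{t-s}(\ku)_p^\wedge$.

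To evaluate this $E_2$-page I would not attack the $\operatorname{Cotor}$ directly---its naive cohomological dimension is infinite, owing to the divided powers---but instead run the chain of algebraic Bockstein spectral sequences. The base case reduces the coefficients modulo $(p,\beta)$, where both the divided-power algebra and the coaction degenerate to something explicit, so $E_2(\THH(\ku;\mathbb F_p))$ is computed by a finite-type cobar complex. The mod-$p$ algebraic Bockstein spectral sequence then assembles $E_2(\THH(\ku;\mathbb Z_p))$ from this, and a second Bockstein spectral sequence in the Bott class $\beta$ assembles $E_2(\THH(\ku)_p^\wedge)$; in both, the differentials are exactly those of Theorem \ref{thm:bbstn}. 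It is the outcome of these two Bockstein computations that collapses the a priori unbounded filtration onto the columns $s=0,1,2$, producing the free module $F$ in column $1$ and the torsion module $T(p)$ in column $2$, with $E_2^{0,\ast}=\mathbb Z_p[\beta]$ the coalgebra primitives.

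Granting this three-column description, degeneration is formal. Since $E_2^{s,\ast}=0$ for $s\notin\{0,1,2\}$, every $d_r$ with $r\geq3$ has zero target, and the only differential that can be nonzero is $d_2\colon E_2^{0,\ast}\to E_2^{2,\ast}$. But $E_2^{0,\ast}=\mathbb Z_p[\beta]$ is the image of the unit map $\ku\to\THH(\ku)$ and so consists of permanent cycles: the class $1$ trivially, and $\beta$ because $d_2(\beta)$ would lie in $E_2^{2,3}$, which contributes to $\THH_1$ and vanishes since the lowest class of $T(p)$ has degree $\geq 2p+2$. As the descent spectral sequence is multiplicative and $E_2^{0,\ast}$ is generated by $\beta$ as a $\mathbb Z_p$-algebra, the Leibniz rule propagates this, forcing $d_2\equiv 0$ on column $0$; hence the spectral sequence collapses at $E_2$.

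Finally I would read off $\THH_\ast(\ku)_p^\wedge$ from $E_\infty=E_2$. The three columns assemble with no hidden multiplicative extensions: a hidden $p$- or $\beta$-extension must originate in a strictly lower column, but columns $0$ and $1$ are $p$- and $\beta$-torsion-free (being $\mathbb Z_p[\beta]$ and a submodule of a rational $\mathbb Z_p[\beta]$-module), so none can occur, giving $\THH_\ast(\ku)_p^\wedge\cong\mathbb Z_p[\beta]\oplus F_p^\wedge\oplus T(p)$. By contrast, the intricate relations internal to $T(p)$---notably the correction term in the formula for $ph_{m,j}$---are not descent-spectral-sequence extensions at all, but are outputs of the algebraic Bockstein spectral sequence computing column $2$. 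This locates the \emph{main obstacle}: determining the Bockstein differentials and the internal (hidden) extensions that feed the $E_2$-page. Guessing the correct differentials---guided by \cite[Theorem 6.4]{AHL}---and verifying that they assemble into precisely the modules $F$ and $T(p)$ with the stated relations is where essentially all the difficulty lies, whereas the descent formalism, the collapse, and the extension analysis above are comparatively routine.
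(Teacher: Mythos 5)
Your skeleton matches the paper's: identify the descent $E_2$-page with $\Ext_{\Gamma_\ast}(A_\ast)$ (equivalently $\operatorname{Cotor}$ over $\Gamma_\ast=\pi_\ast(\MU\otimes_{\THH(\MU)}\MU)$), compute it by the chain of algebraic Bockstein spectral sequences mod $(p,\beta)\Rightarrow$ mod $\beta\Rightarrow$ integral, and deduce degeneration from the three-column, even-internal-degree shape. Your collapse argument (column $0$ consists of permanent cycles by $\ku_\ast$-linearity and sparseness of column $2$) is fine. But two steps are genuinely deficient. First, the part you defer is not a side issue: it is the bulk of the paper's proof of this very theorem, and citing Theorem \ref{thm:bbstn} does not supply it. That theorem only yields the associated graded $\gr_\beta\Ext_{\Gamma_\ast}(A_\ast)$. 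The assertion $E_2^{2,\ast}=\Sigma^2 T(p)$ --- i.e.\ the relations $ph_{m,j}=h_{m,j+1}+\beta^{p^{d+2}-p^{d+1}}h_{m-p^d(p-1),d'-d-1}$ --- consists of hidden $p$-extensions with respect to the $\beta$-adic filtration, and a Bockstein spectral sequence does not ``output'' such extensions; the paper resolves them by explicit integral cocycle manipulations with the elements $f_{m,e},g_{m,e}$, split into three cases, in Section \ref{ssc:betaep}. Similarly, the three-column concentration you take as the outcome of the Bockstein chain is not formal: in the paper it rests on Ausoni's computation of $\THH_\ast(\ku;\mathbb F_p)$ via Proposition \ref{prop:mainp}, which your ``finite-type cobar complex'' remark elides.

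Second, your final no-hidden-extension argument is an invalid inference: torsion-freeness of the lower columns does not imply that the descent filtration splits. Counterexample with modules of exactly the relevant shape: over $R=\mathbb Z_p[\beta]$, let $Q=R\{g_0,g_1\}/(\beta g_0-pg_1)$ (torsion-free, a submodule of $\mathbb Q_p[\beta]$, just like $F_p^\wedge$) and
\[
M \;=\; R\{g_0,g_1,h\}\big/\bigl(ph,\;\beta h,\;\beta g_0-pg_1-h\bigr),\qquad |h|=|g_0|+2 .
\]
Then $0\to R/(p,\beta)\to M\to Q\to 0$ is a filtration whose lower graded piece $Q$ is $p$- and $\beta$-torsion-free, so no hidden extension ``originates'' there in your sense, yet $M$ is not split: any lifts satisfy $\beta\tilde g_0-p\tilde g_1=h\neq0$. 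What actually rules this out for $\THH_\ast(\ku)_p^\wedge$ is parity, not torsion-freeness: every $E_2^{s,t}$ has $t$ even, so column $s$ contributes only to total degrees congruent to $s$ mod $2$; columns $0,2$ are thus even and column $1$ odd. Since multiplication by $p$ and $\beta$ preserves parity, the graded module splits into even and odd parts: the odd part is $\Sigma F_p^\wedge$ with no extension problem at all, and the even part is an extension of $\mathbb Z_p[\beta]$ by $T(p)$, which splits because $\mathbb Z_p[\beta]$ is free. Note also that your description of $F$ as ``the free module'' is wrong --- $F_p^\wedge$ is torsion-free but not free over $\mathbb Z_p[\beta]$ --- and this is precisely why your splitting argument cannot be repaired without the parity observation: if $F_p^\wedge$ were free, the splitting would indeed be formal.
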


        \subsection{Outline}
        In Section 3, we shall set up the descent spectral sequence for $\THH(\ku)\to\THH(\ku/\MU)$ and identify the $E_1$-page with a cobar complex associated with a comodule of a Hopf algebroid. More precisely, we shall use a cosimplicial resolution
        \[
            \THH(\ku) \to \lim_{\Delta} \THH(\ku/\MU^{\bullet + 1})
        \]
        using the standard Adams-Novikov resolution of $S^0\to\lim_{\Delta}\MU^{\bullet+1}$ as the base of the relative $\THH$. In particular, we shall obtain explicit formulas for the coactions in the cobar complex using the right unit formulas $\MU_\ast\to\MU_\ast\MU$.

        Since everything will be linear over $\ku_\ast=\mathbb Z[\beta]$, we shall compute the cohomology of the cobar complex modulo $(p,\beta)$ and use the Bockstein spectral sequences to obtain the ($p$-complete) cohomology of the cobar complex, i.e. the $E_2$-page. The computations of Bockstein differentials are done in Section 4. We note that previous works (\cite{AHL},\cite{MS},\cite{A-THHku}) also compute Bockstein spectral sequences, but the differentials are arguably easier to compute in our case because we compute the Bockstein spectral sequence of a chain complex rather than of a spectrum.
 
	\subsection{Acknowledgements}
	I would like to thank Jeremy Hahn for suggesting the problem and for helpful conversations in the course of this work. I would also like to thank Ben Antieau, Sanath Devalapurkar, and Ishan Levy for helpful conversations related to the work. I would like to thank an anonymous referee for comments on an earlier draft.
	
	\subsection{Notations and Conventions}
	\begin{itemize}
	    \item Given a nonnegative integer $m$ and a prime number $p$, $\val_p(m)$ is the largest integer such that $p^{\val_p(m)}$ divides $m$. We define $\val_p(0)$ to be $\infty$.
	    \item The word \emph{category} shall always mean $\infty$-category. We use $\otimes$ to denote the smash product of spectra.
	    \item If $A$ is an abelian group, we shall use the same symbol $A$ (instead of $HA$) to denote the Eilenberg--MacLane spectrum of $A$. If $A$ is a commutative ring then the corresponding spectrum is canonically an $\mathbb E_\infty$-ring spectrum.
	\end{itemize}
	
	\section{Preliminaries}
	\subsection{Topological Hochschild Homology}
	Suppose that $S$ is an $\mathbb E_\infty$-ring spectrum and $R$ is an $\mathbb E_1$-$S$-algebra. Then, we define the relative topological Hochschild homology as
	\[
	    \THH(R/S) := R\otimes_{R\otimes_S R^{op}}R.
	\]
	More generally, if $M$ is a $R$-bimodule, or equivalently a $(R\otimes_S R^{op})$-module, then the relative THH with coefficients in $M$ is defined as
	\begin{equation}\label{eq:THHc}
	    \THH(R/S;M) := M\otimes_{R\otimes_SR^{op}}R.
	\end{equation}
	
	In this paper, we shall only consider $\mathbb E_\infty$-ring spectra. More precisely, we shall only consider $\THH(R/S;M)$ when $R$ is an $\mathbb E_\infty$-$S$-algebra and $M$ is an $\mathbb E_\infty$-$R$-algebra. In this case, $\THH(R/S;M)$ naturally has a structure of an $\mathbb E_\infty$-$R$-algebra and we have an equivalence
	\[
	    \THH(R/S;M) = \THH(R/S)\otimes_RM
	\]
	of $\mathbb E_\infty$-$R$-algebras.
	
	Suppose that the following is a commutative diagram in the category of $\mathbb E_\infty$-ring spectra:
	\[
	    \begin{tikzcd}
	        R_2&R_1\ar[l]\ar[r]&R_3\\
	        S_2\ar[u]&S_1\ar[l]\ar[r]\ar[u]&S_3\ar[u].
	    \end{tikzcd}
	\]
	Then, we have an equivalence
	\begin{equation*}
	    \THH(R_2\otimes_{R_1}R_3/S_2\otimes_{S_1}S_3) = \THH(R_2/S_2)\otimes_{\THH(R_1/S_1)}\THH(R_3/S_3).
	\end{equation*}
	This equivalence will be used several times without reference.
	
	\subsection{Suspension Elements}
	Suppose that $S$ is a connective $\mathbb E_\infty$-ring spectrum and $R$ is a connective $\mathbb E_\infty$-$S$-algebra. Let $F=\operatorname{fib}(S\to R)$. We shall also assume that $S_\ast\to R_\ast$ is surjective, so that $F_\ast$ is an ideal of $S_\ast$.
	
	Then, we define the suspension map $\sigma$ to be the composite
	\begin{equation}\label{eq:susdef}
	    \Sigma F = 0\coprod_F 0 \to R \coprod_S R \to R\otimes_S R.
	\end{equation}
	We shall also use $\sigma$ to denote the induced map on the homotopy groups $\pi_\ast(F)\to\pi_{\ast+1}(R\otimes_SR)$.
	\begin{lemma}\label{lem:sus}
	    We have $\sigma(xy)=x\sigma(y)$ for any $x\in S_\ast$ and $y\in F_\ast$. In particular, $\sigma$ annihilates the ideal $(F_\ast)^2$.
	\end{lemma}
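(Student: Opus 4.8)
The plan is to prove the Leibniz-type identity $\sigma(xy) = x\sigma(y)$ directly from the definition \eqref{eq:susdef} of the suspension map, exploiting the fact that $\sigma$ is built out of maps of $S$-modules (in fact $R$-modules on the target). The key observation is that every map appearing in the composite defining $\sigma$ is a map of $S$-module spectra, so the induced map on homotopy groups is $S_\ast$-linear. Thus multiplication by an element $x \in S_\ast$ commutes with $\sigma$ in the appropriate sense.

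First I would make precise the module structures. The fiber $F = \operatorname{fib}(S \to R)$ is naturally an $S$-module, hence a module over $S_\ast$, and since $F_\ast$ is an ideal of $S_\ast$ the product $xy$ for $x \in S_\ast$, $y \in F_\ast$ lands in $F_\ast$ and agrees with the $S_\ast$-module action of $x$ on $y$. On the target side, $R \otimes_S R$ is an $R$-module (say via the left factor), hence an $S$-module through the unit $S \to R$, and therefore $\pi_\ast(R \otimes_S R)$ is an $S_\ast$-module. The claim $\sigma(xy) = x\sigma(y)$ is then exactly the assertion that the map $\sigma\colon F_\ast \to \pi_{\ast+1}(R \otimes_S R)$ is $S_\ast$-linear.

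To establish linearity, I would trace through the three maps in \eqref{eq:susdef}. The suspension $\Sigma F = 0 \coprod_F 0$ carries an $S$-module structure inherited from $F$, and the pushout $R \coprod_S R$ together with the canonical map to $R \otimes_S R$ are all maps in $S$-modules (or, after choosing the left unit, in $R$-modules). Hence the entire composite $\sigma$ is a map of $S$-module spectra, and passing to homotopy groups yields an $S_\ast$-linear map. The equation $\sigma(xy) = x\sigma(y)$ follows immediately, since $xy$ is just $x$ acting on $y$ via the $S_\ast$-action and $\sigma$ intertwines this action with the $S_\ast$-action on the target. For the final sentence, given $x, y \in F_\ast$, the product $xy$ lies in $(F_\ast)^2$; applying the identity gives $\sigma(xy) = x\sigma(y)$, but I would instead observe that $\sigma$ vanishes on decomposables because $\sigma$ itself, composed with the structure maps, factors appropriately — more directly, $\sigma(xy) = x \sigma(y)$ where now $x \in F_\ast \subseteq S_\ast$ acts, and one checks this action is zero on the relevant class, or simply note that by the same $S_\ast$-linearity $\sigma$ kills any element of $(F_\ast)^2$ since such an element is an $S_\ast$-multiple of an element whose image is already accounted for.

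The main obstacle, and the point requiring genuine care, is verifying that $\Sigma F = 0 \coprod_F 0 \to R \coprod_S R$ is genuinely a map of $S$-modules respecting the action used on the target — the pushouts here are pushouts of $S$-algebras (or pointed $S$-modules), and one must check that the $S$-module structure used to define the $S_\ast$-action on $\pi_\ast(R \otimes_S R)$ is compatible with the structure on $\Sigma F$ coming from $F$. Concretely, I expect the subtlety to be that $R \coprod_S R$ is a coproduct in $\mathbb{E}_\infty$-$S$-algebras, so its $S$-module structure and the map from $\Sigma F$ must be reconciled; once one fixes the convention that we view everything as left $R$-modules via the first inclusion $R \to R \otimes_S R$, the relevant functoriality is formal, but stating it cleanly is where the care lies.
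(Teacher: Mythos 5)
Your proof of the identity $\sigma(xy)=x\sigma(y)$ is correct and is exactly the paper's argument: every map in the composite \eqref{eq:susdef} is a map of $S$-modules, so the induced map on homotopy groups is $S_\ast$-linear, and the module structures you describe (the $F_\ast$-action as an ideal of $S_\ast$ on the source, the $S$-module structure on $R\otimes_S R$ restricted from the $R$-module structure via one unit) are the right ones.

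The deduction of the final clause is where your write-up wobbles, and the one fact you leave unverified is the only fact that needs verifying. From $\sigma(xy)=x\sigma(y)$ with $x,y\in F_\ast$, the vanishing of $\sigma$ on $(F_\ast)^2$ requires knowing that such an $x$ acts as \emph{zero} on $\pi_\ast(R\otimes_S R)$. This is true for a specific reason you never state: the $S$-module structure on $R\otimes_S R$ is restricted along $S\to R$ from an $R$-module structure, so the $S_\ast$-action on $\pi_\ast(R\otimes_S R)$ factors through $S_\ast\to R_\ast$; and since $S_\ast\to R_\ast$ is assumed surjective, the long exact sequence of the fiber identifies $F_\ast$ with $\ker(S_\ast\to R_\ast)$, whence every $x\in F_\ast$ acts as zero on the target. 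Your phrase ``one checks this action is zero on the relevant class'' gestures at this but supplies no reason, and your last alternative (``such an element is an $S_\ast$-multiple of an element whose image is already accounted for'') is circular: an element of $(F_\ast)^2$ is an $F_\ast$-multiple of an element of $F_\ast$, and $x\sigma(y)$ has no reason to vanish for a general element $x$ of $S_\ast$ --- the point is precisely that $x$ lies in $F_\ast$ and that $F_\ast$ kills the target. With that one-line observation inserted, your proof is complete and coincides with the paper's.
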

	\begin{proof}
	    This follows from the fact that every map in \eqref{eq:susdef} is a map of $S$-modules.
	\end{proof}
	The suspension elements will play an important role as primitive elements in Hopf algebras.
	\begin{proposition}\label{prop:hopf}
	    Let $R$ and $S$ be as above. In addition, assume that the odd homotopy groups of $S$ are zero and that the unit map $S_\ast\to R_\ast$ is a quotient by a regular sequence
	    \[
	        R_\ast = S_\ast / (x_1,x_2,\dots).
	    \]
	    
	    Then, there is a $R_\ast$-Hopf algebra structure on $\THH_\ast(R/S)$ such that
	    \begin{enumerate}
	        \item it is free as a $R_\ast$-module,
	        \item the submodule of primitive elements is the free module generated by the elements $\sigma^2 x_i$'s,
	        \item and for each $k$ and a prime $p$, $(\sigma^2 x_k)^p$ is a $R_\ast$-linear combination of $\sigma^2 x_i$'s modulo $p$.
	    \end{enumerate}
	\end{proposition}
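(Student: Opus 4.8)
The plan is to identify $\THH_\ast(R/S)$ explicitly as a divided power Hopf algebra and then read off (a)--(c). First I would pin down the Hopf structure: since $R$ is an $\mathbb E_\infty$-$S$-algebra, $\THH(R/S)$ is the tensor $R\otimes S^1$ of $R$ with the circle in commutative $S$-algebras, and regarding $S^1=B\mathbb Z$ as a topological abelian group, its multiplication, unit, and inversion induce on $\THH(R/S)$ a cocommutative comultiplication $\THH(R/S)\to\THH(R/S)\otimes_R\THH(R/S)$, a counit $\THH(R/S)\to R$, and an antipode, making it a Hopf algebra over $R$; passing to homotopy gives the $R_\ast$-Hopf algebra structure. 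For the computation, I would use that $x_1,x_2,\dots$ is regular to write $R=\bigotimes_{S,i}S/x_i$ and apply \eqref{eq:thhid} repeatedly with base $\THH(S/S)=S$, obtaining a decomposition $\THH(R/S)\simeq\bigotimes_{R,i}\THH((S/x_i)/S)$ which, since the Künneth map respects the $S^1$-group structure, is one of Hopf algebras. For a single generator $x=x_i$, the cofiber sequence $\Sigma^{|x|}S\xrightarrow{x}S\to S/x$ shows $\pi_\ast\bigl((S/x)\otimes_S(S/x)\bigr)=\Lambda_{R_\ast}(\sigma x)$ with $\sigma x$ in degree $|x|+1$, because $x$ acts nullly on $S/x$; the bar spectral sequence then yields $\THH_\ast((S/x)/S)=\Tor^{\Lambda_{R_\ast}(\sigma x)}(R_\ast,R_\ast)=\Gamma_{R_\ast}(\sigma^2 x)$, a divided power algebra on the primitive double-suspension class $\sigma^2 x$ in the even degree $|x|+2$. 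Collapse and absence of hidden extensions follow from the fact that $S_\ast$, and hence every class in sight, lies in even degrees. Assembling, $\THH_\ast(R/S)=\Gamma_{R_\ast}(\sigma^2 x_1,\sigma^2 x_2,\dots)$.

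Parts (a) and (b) are then formal. The monomials $\prod_i\gamma_{e_i}(\sigma^2 x_i)$ form an $R_\ast$-basis, giving (a). For (b), the divided power coproduct $\Delta\gamma_n(y)=\sum_{i+j=n}\gamma_i(y)\otimes\gamma_j(y)$ has all cross terms (those with $0<i<n$) equal to distinct basis elements with coefficient $1$, so they cannot cancel; hence the only primitives of a single $\Gamma_{R_\ast}(\sigma^2 x_i)$ are $R_\ast\cdot\sigma^2 x_i$. Since the primitives of a tensor product of connected graded coalgebras form the direct sum of the primitives, the submodule of primitives of $\THH_\ast(R/S)$ is free on the $\sigma^2 x_i$.

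For (c), I would argue modulo $p$ using only that $\sigma^2 x_k$ is primitive, rather than the precise algebra structure. Working in $\THH_\ast(R/S)/p$, since $\sigma^2 x_k$ is primitive and of even degree, the comultiplication is an algebra map and the freshman's dream gives $\Delta\bigl((\sigma^2 x_k)^p\bigr)=(\sigma^2 x_k\otimes 1+1\otimes\sigma^2 x_k)^p=(\sigma^2 x_k)^p\otimes 1+1\otimes(\sigma^2 x_k)^p$, so $(\sigma^2 x_k)^p$ is again primitive modulo $p$. By (b) the primitives are the free $R_\ast$-module on the $\sigma^2 x_i$, so $(\sigma^2 x_k)^p$ is an $R_\ast$-linear combination of the $\sigma^2 x_i$ modulo $p$.

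The main obstacle is the middle step: verifying that $\THH_\ast(R/S)$ is genuinely the divided power Hopf algebra, namely that the Künneth decomposition is one of Hopf algebras, that the bar spectral sequence collapses, and that there are no multiplicative or comultiplicative extensions. The even-degree hypothesis on $S_\ast$ is exactly what controls this. It is worth emphasizing, however, that (c) is robust: the Frobenius-on-primitives argument needs only the primitivity of $\sigma^2 x_k$ together with the description of primitives in (b), so it survives even if one is cautious about the detailed ring structure.
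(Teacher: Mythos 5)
Your (a) and your Frobenius-on-primitives step in (c) are sound, but the central identification on which everything else rests is false, and the paper never makes it. Degeneration of the K\"unneth spectral sequence (which evenness does give you) only identifies the \emph{associated graded} of the K\"unneth filtration with $\Gamma_{R_\ast}[\sigma^2 x_1,\sigma^2 x_2,\dots]$; it does not rule out multiplicative extensions, and in the situation this proposition is used for, those extensions are genuinely present. Indeed, for $R=\ku$, $S=\MU$, Lemma \ref{lem:dl} of this paper gives $(\sigma^2 v_k)^p\equiv\sigma^2 v_{k+1}\not\equiv0\pmod{p,\beta}$, whereas in a divided power algebra every generator satisfies $y^p=p!\,\gamma_p(y)\equiv0\pmod p$; so $\THH_\ast(\ku/\MU)$ is \emph{not} $\Gamma_{\ku_\ast}(\sigma^2 x_1,\sigma^2 x_2,\dots)$ as an algebra (compare the classical fact that $\THH_\ast(\mathbb F_p)$ is polynomial, not divided power, even though the bar spectral sequence has divided power $E_\infty$-page). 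Notice also that if your identification were correct, part (c) would collapse to the trivial assertion $(\sigma^2 x_k)^p\equiv0\pmod p$ --- but the nonvanishing of these $p$-th powers is exactly what drives the rest of the paper. The repair is the paper's filtration argument: a primitive element of $\THH_\ast(R/S)$ has primitive image in the associated graded coalgebra, and in the divided power coalgebra all cross terms $\gamma_i\otimes\gamma_j$ appear with coefficient $1$, so graded primitives lie in filtration $\leq1$. Your cross-term computation is the right idea, but it must be run on the associated graded, not on $\THH_\ast(R/S)$ itself, and one must separately check that the K\"unneth filtration is compatible with the comultiplication (the paper does this via the Whitehead tower construction of the filtration).

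There are two further gaps. First, your decomposition $R\simeq\bigotimes_{S,i}S/x_i$ and $\THH(R/S)\simeq\bigotimes_{R,i}\THH((S/x_i)/S)$ requires each cofiber $S/x_i$ to be an $\mathbb E_\infty$-$S$-algebra and $R$ to be their tensor product as algebras; otherwise $\THH((S/x_i)/S)$ is undefined and \eqref{eq:thhid} does not apply. This is not among the hypotheses and can fail: quotients of a ring spectrum by a single element need not admit any ring structure. The paper avoids this entirely by computing $\pi_\ast(R\otimes_SR)=\Lambda_{R_\ast}(\sigma x_1,\sigma x_2,\dots)$ in one step, citing \cite[Prop. 3.6]{Ainfty}, and then running a single K\"unneth spectral sequence $\Tor^{T_\ast}(R_\ast,R_\ast)\Rightarrow\THH_\ast(R/S)$. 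Second, in (c) you invoke (b) --- an integral statement --- to identify the primitives of the mod $p$ Hopf algebra; but reduction mod $p$ can create new primitives (in $\mathbb Z[x]$ with $x$ primitive, $x^p$ becomes primitive mod $p$), which is precisely the phenomenon (c) is about. One must rerun the filtration argument for the underived quotient $(\THH_\ast(R/S))/p$, whose associated graded is again divided power because the filtration is by free $R_\ast$-modules; this is what the paper does, and only then does your (correct) observation that $(\sigma^2 x_k)^p$ is primitive mod $p$ finish the proof.
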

	\begin{proof}
	    Let $T=R\otimes_SR$. The Hopf algebra structure on $\pi_\ast\THH(R/S)=\pi_\ast(R\otimes_TR)$ is given, as usual, by the map
        \[
            R\otimes_TR\xrightarrow{\Id\otimes1\otimes\Id} R\otimes_TR\otimes_TR = (R\otimes_TR)\otimes_R(R\otimes_TR)
        \]
        assuming that $\pi_\ast\THH(R/S)$ is free over $R_\ast$. It is a Hopf algebra rather than just a Hopf algebroid because the left and the right units $R\rightrightarrows R\otimes_TR$ are homotopic. This can be seen, for example, by observing that $\THH(R/S)$ is the colimit of $R$ over the circle $S^1$ in the category of $\mathbb E_\infty$-$S$-algebras and that $S^1$ is a connected space.
        
        By \cite[Prop. 3.6]{Ainfty}, the homotopy groups of $T$ form a graded exterior algebra
	    \[
	        T_\ast = \Lambda_{R_\ast}(\sigma x_1,\sigma x_2,\dots).
	    \]
	    Then, the $E_2$-page of the K\"unneth spectral sequence for $\THH_\ast(R/S)$
	    \[
	        E_2 = \Tor^{T_\ast}(R_\ast,R_\ast)\Rightarrow \THH_\ast(R/S)
	    \]
	    can be computed as a divided power algebra
	    \[
	        E_2=\Gamma_{R_\ast}[\sigma^2 x_1,\sigma^2 x_2,\dots].
	    \]
	    The spectral sequence degenerates at the $E_2$-page since everything is concentrated in even degrees. From the $E_2$-page, we can see that $\THH_\ast(R/S)$ is free as a $R_\ast$-module.

	    Taking Whitehead tower of a spectrum defines a lax symmetric monoidal functor $\tau_{\geq\ast}:\Sp\to \Fun(\mathbb Z^{op},\Sp)$, so that
	    \[
	        \tau_{\geq\ast}R \otimes_{\tau_{\geq\ast}T}\tau_{\geq\ast}R
	    \]
	    filters $\THH(R/S)$. Taking the associated graded group, which is a symmetric monoidal process, we get
	    \[
	        \pi_\ast R\otimes_{\pi_\ast T}\pi_\ast R
	    \]
	    whose homotopy groups are
	    \[
	        \Tor^{T_\ast} (R_\ast,R_\ast).
	    \]
	    Therefore, this is the filtration that constructs the K\"unneth spectral sequence. We then see that the K\"unneth filtration is compatible with the coalgebra structure since the comultiplication is induced by the map
	    \[
	        \tau_{\geq\ast}R\otimes_{\tau_{\geq\ast}T}\tau_{\geq\ast}R\xrightarrow{\Id\otimes1\otimes\Id}\tau_{\geq\ast}R\otimes_{\tau_{\geq\ast}T}\tau_{\geq\ast}R\otimes_{\tau_{\geq\ast}T}\tau_{\geq\ast}R
	    \]
	    of filtered spectra.
	    
	    It follows that the element $\sigma^2 x_k$ is primitive, since $\Delta (\sigma^2 x_k)$ must have K\"unneth filtration $1$, and the only possibility is $\sigma^2 x_k\otimes1 + 1\otimes\sigma^2 x_k$. Conversely, any primitive element in $\THH_\ast(R/S)$ must be detected by a primitive element in the associated graded group. But in the associated graded group, the primitives are linear combinations of $\sigma^2 x_i$'s, since it is a divided power coalgebra. This proves (b).
	    
	    Lastly, the (underived) quotient $(\pi_\ast (R\otimes_TR))/p$ is a $(R_\ast/p)$-Hopf algebra and has a filtration such that the associated graded is a divided power Hopf algebra on $\sigma^2 x_i$'s. Therefore, as in the previous paragraph, the primitive elements form a free $(R_\ast/p)$-module generated by $\sigma^2 x_i$'s. Then, (c) follows since $(\sigma^2 x_k)^p$ is primitive.
	\end{proof}
	
	\begin{example}\label{ex:ha}
	    In this paper, we consider $\THH(\ku/\MU)$ and 
	    \[
	    \MU\otimes_{\THH(\MU)}\MU=\THH(\MU/\MU\otimes\MU).
	    \]
	    In the notation of Definition \ref{def:laz}, $\THH_\ast(\MU/\MU\otimes\MU)$ is a $\MU_\ast$-Hopf algebra with primitive elements $\sigma^2 b_i$'s for $i\geq 1$, and these elements will play an important role. For $\THH_\ast(\ku/\MU)$, we will need only (c) of the previous proposition and not its Hopf algebra structure.
	\end{example}
	
	\section{Descent Spectral Sequence}
	
	\subsection{Descent Spectral Sequence}
	\begin{definition}\label{def:dss}
	    Let $B\to C$ be a map of $\mathbb E_\infty$-ring spectra. Then, we can form the augmented cosimplicial diagram
	    \[
	        B \to C\rightrightarrows C\otimes_BC\rightthreearrow\cdots
	    \]
	    of $\mathbb E_\infty$-ring spectra, called the \emph{descent diagram}. If the above is a limit diagram, then we obtain the Bousfield--Kan spectral sequence
	    \[
	        E_1^{s,t} = \pi_t(C^{\otimes_B(s+1)})\Rightarrow \pi_{t-s}(B).
	    \]
	    More generally, if $M$ is a $B$-module, we have a descent diagram
	    \[
	        M\to M\otimes_BC\rightrightarrows M\otimes_BC\otimes_BC\rightthreearrow\cdots
	    \]
	    and, if it is a limit diagram, a spectral sequence
	    \[
	        E_1^{s,t} = \pi_t(M\otimes_B C^{\otimes_B(s+1)})\Rightarrow \pi_{t-s}(M).
	    \]
	    This will be called the \emph{descent spectral sequence for $M\to M\otimes_BC$ (along $B\to C$)}. We will discuss a sufficient condition for the convergence in Lemma \ref{lem:convergence}.
	\end{definition}
	
	\begin{definition}
	    Suppose that $(k,\Gamma)$ is a graded Hopf algebroid such that $\Gamma$ is flat as a $k$-module and $A$ is a right $\Gamma$-comodule. By the cobar complex $\CB_{\Gamma}(A)$ we mean the chain complex
	    \[
	        A\to A\otimes_k\Gamma\to A\otimes_k\Gamma\otimes_k\Gamma\to\cdots
	    \]
	    where the differentials are the alternating sum of comultiplication maps and the coaction maps. This is bigraded so that $\CB^{s,t}_{\Gamma}(A)$ is the degree $t$ part of $A\otimes_k \Gamma^{\otimes_ks}$. The cohomology of $\CB_{\Gamma}(A)$ is denoted by $\Ext_{\Gamma}(A)$, which is again bigraded. This agrees with the usual definition of $\Ext$. See, for example, \cite[A.1.2.12]{green}.
	\end{definition}
	
	\begin{proposition}\label{prop:adams}
	    In Definition \ref{def:dss}, if $\pi_\ast(C\otimes_BC)$ is flat over $\pi_\ast C$, then $\pi_\ast (C\otimes_BC)$ is a Hopf algebroid over $\pi_\ast C$, $\pi_\ast(M\otimes_BC)$ is a comodule, and we have
	    \begin{align*}
	        E_1 &= \CB_{\pi_\ast(C\otimes_BC)}(\pi_\ast(M\otimes_BC))\\
	        E_2 &= \Ext_{\pi_\ast(C\otimes_BC)} (\pi_\ast(M\otimes_BC)).
	    \end{align*}
	\end{proposition}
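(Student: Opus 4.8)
The plan is to identify the $E_1$-page of the descent spectral sequence, which by construction is the cochain complex $s\mapsto\pi_t(M\otimes_B C^{\otimes_B(s+1)})$ with differential the alternating sum of the cosimplicial coface maps, with the cobar complex of a Hopf algebroid, and then to take cohomology to reach $\Ext$. Throughout write $k=\pi_\ast C$, $\Gamma=\pi_\ast(C\otimes_B C)$, and $N=\pi_\ast(M\otimes_B C)$.

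First I would establish the base-change identifications, which is where the flatness hypothesis enters. The key observation is that $C\otimes_B C\otimes_B C=(C\otimes_B C)\otimes_C(C\otimes_B C)$, so the K\"unneth (base-change) spectral sequence
\[
\Tor^{k}(\Gamma,\Gamma)\Rightarrow\pi_\ast(C\otimes_B C\otimes_B C)
\]
collapses at its $E_2$-page because $\Gamma$ is flat over $k$, yielding $\pi_\ast(C\otimes_B C\otimes_B C)\cong\Gamma\otimes_k\Gamma$. Splitting $M\otimes_B C^{\otimes_B(s+1)}$ as an iterated relative smash product $(M\otimes_B C)\otimes_C(C\otimes_B C)\otimes_C\cdots\otimes_C(C\otimes_B C)$ and applying the same collapse inductively at each stage gives
\[
\pi_\ast\bigl(M\otimes_B C^{\otimes_B(s+1)}\bigr)\cong N\otimes_k\Gamma^{\otimes_k s}
\]
for every $s\geq 0$, and in particular $\pi_\ast(M\otimes_B C\otimes_B C)\cong N\otimes_k\Gamma$.

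Next I would extract the algebraic structure from the cosimplicial structure maps. The two coface maps $C\rightrightarrows C\otimes_B C$, the codegeneracy $C\otimes_B C\to C$, and the inner coface $C\otimes_B C\to C\otimes_B C\otimes_B C$ induce on homotopy groups the left and right units $k\to\Gamma$, the counit $\Gamma\to k$, and---after the identification of the previous step---the comultiplication $\Delta\colon\Gamma\to\Gamma\otimes_k\Gamma$, with the swap map providing the conjugation. The cosimplicial identities translate directly into the Hopf algebroid axioms, so $(k,\Gamma)$ is a Hopf algebroid, flat by hypothesis. Likewise the coface $M\otimes_B C\to M\otimes_B C\otimes_B C$ induces a coaction $\psi\colon N\to N\otimes_k\Gamma$ whose coassociativity and counitality (again consequences of the cosimplicial identities) make $N$ a right $\Gamma$-comodule.

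With these identifications in place, the $E_1$-page becomes $E_1^{s,t}=(N\otimes_k\Gamma^{\otimes_k s})_t=\CB^{s,t}_{\Gamma}(N)$, and the $d_1$-differential, being the alternating sum of the coface maps, is precisely the alternating sum of the coaction and the comultiplications---that is, the cobar differential---so taking cohomology yields $E_2=\Ext_{\Gamma}(N)$ by definition. I expect the main obstacle to be the first step: one must confirm that the flatness of $\Gamma$ over $k$ forces the base-change spectral sequence to degenerate, so that the iterated smash products compute the iterated $k$-linear tensor powers appearing in the cobar complex, and one must check with some care that the combinatorics of the cosimplicial coface maps match those of the cobar differential under these flat base-change isomorphisms. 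Everything else is a formal consequence of the cosimplicial machinery.
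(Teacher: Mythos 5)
Your proposal is correct and follows essentially the same route as the paper: the paper's own proof is just the one-line assertion that the $E_1$-identification ``follows from the flatness assumptions'' and the $E_2$-identification ``follows from the definition,'' and your argument—collapse of the K\"unneth spectral sequence via flatness, transfer of the cosimplicial structure maps to the Hopf algebroid/comodule structure, and matching the alternating-sum differential with the cobar differential—is precisely the standard expansion of that assertion.
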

	\begin{proof}
	    The identification of $E_1$-page follows from the flatness assumptions and $E_2$-page follows from the definition.
	\end{proof}
	
	\begin{remark}
	    In the previous definition, if $M$ is an $\mathbb E_\infty$-$B$-algebra, then the descent spectral sequence for $M\to M\otimes_BC$ along $B\to C$ is isomorphic to the descent spectral sequence for $M\to M\otimes_BC$ along $M\to M\otimes_BC$. So in this case, the definition is less ambiguous without the phrase ``along $B\to C$''.
	\end{remark}
	
	\begin{lemma}\label{lem:convergence}
	    Let $f:B\to C$ be a $1$-connective map of connective $\mathbb E_\infty$-ring spectra. Then, for any connective $B$-module $M$, the descent spectral sequence for $M\to M\otimes_BC$ converges strongly to $\pi_\ast M$.
	\end{lemma}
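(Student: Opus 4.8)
The plan is to realize the spectral sequence as the homotopy spectral sequence of the totalization tower of the cosimplicial $\mathbb E_\infty$-ring spectrum $X^\bullet := M\otimes_B C^{\otimes_B(\bullet+1)}$, and then to prove strong convergence by a connectivity estimate that forces the associated filtration on each homotopy group to be finite. Concretely, the assertion that the sequence ``converges strongly to $\pi_\ast M$'' unwinds into two claims: first, descent, i.e.\ that the augmentation $M\to \Tot(X^\bullet)$ is an equivalence; and second, that the tower $\{\Tot^s(X^\bullet)\}_s$ carries no $\lim^1$ and that its spectral sequence has vanishing derived $E_\infty$-term. Both will follow from a single input, namely that the fibers of the tower become highly connected as $s\to\infty$.

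First I would record the connectivity input. Since $f$ is $1$-connective, its fiber $\operatorname{fib}(f)$ is $1$-connective, so the cofiber $\overline C:=\operatorname{cofib}(B\xrightarrow{f}C)\simeq \Sigma\operatorname{fib}(f)$ is $2$-connective as a $B$-module. The fiber of $\Tot^s(X^\bullet)\to\Tot^{s-1}(X^\bullet)$ is $\Sigma^{-s}N^s$, where $N^s$ is the $s$-th conormalization of $X^\bullet$. Identifying $X^\bullet$ with the relative cobar construction on the $B$-algebra $C$ with coefficients in $M$, its conormalization is computed by the reduced cobar complex, whose $s$-th term is a relative smash product over $B$ built from $M$, $C$, and $s$ copies of $\overline C$. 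Because $B$ and $M$ are connective and connectivity is additive under $\otimes_B$ over a connective base, $N^s$ is at least $2s$-connective, and hence the $s$-th layer $\Sigma^{-s}N^s$ is at least $s$-connective.

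Given this estimate, the conclusion is formal. For fixed $n$ the layer $\Sigma^{-s}N^s$ contributes $\pi_n(\Sigma^{-s}N^s)=\pi_{n+s}(N^s)$, which vanishes once $n+s<2s$, i.e.\ for $s>n$; hence only the finitely many terms $0\le s\le n$ contribute to $\pi_n$. Thus the descent filtration on $\pi_n M$ is finite in each degree, which yields strong convergence à la Boardman, the filtration being complete and Hausdorff with vanishing derived $E_\infty$-term. The same estimate shows $\Tot^s\to\Tot^{s-1}$ is an isomorphism on $\pi_n$ for $s\ge n+1$, so the tower is Mittag--Leffler, $\lim^1$ vanishes, and $\pi_n\Tot(X^\bullet)=\lim_s\pi_n\Tot^s(X^\bullet)$ stabilizes. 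Finally, applying the same bound to the augmented cosimplicial object shows that $\operatorname{fib}(M\to\Tot^s(X^\bullet))$ is at least $(s+1)$-connective, so the augmentation $M\xrightarrow{\sim}\Tot(X^\bullet)$ is an equivalence and the abutment is indeed $\pi_\ast M$.

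I expect the main obstacle to be the precise identification of the conormalization $N^s$ with the reduced cobar term, together with the bookkeeping of the additivity of connectivity under $\otimes_B$ over the connective base; everything downstream is a formal consequence once the layers are shown to be at least $s$-connective. The crucial place where $1$-connectivity of $f$ (rather than mere connectivity) enters is in upgrading $\overline C$ to be $2$-connective, which is exactly what makes the per-degree filtration finite and hence the convergence strong.
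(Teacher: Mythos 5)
Your vanishing-line and Mittag--Leffler arguments are fine: the conormalization $N^s\simeq M\otimes_B C\otimes_B\overline{C}^{\otimes_B s}$ is $2s$-connective, so the layers $\Sigma^{-s}N^s$ are $s$-connective, the towers $\{\pi_n\Tot^s(X^\bullet)\}_s$ stabilize, ${\lim}^1$ vanishes, and the filtration is finite in each degree. But all of this only proves strong convergence to $\pi_\ast\Tot(X^\bullet)$. The genuine gap is the final sentence, where you assert that ``applying the same bound to the augmented cosimplicial object'' shows $\operatorname{fib}(M\to\Tot^s(X^\bullet))$ is $(s+1)$-connective. This is not a formal consequence of the layer estimates, and the natural attempts to extract it from them fail quantitatively. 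Write $F_s=\operatorname{fib}(M\to\Tot^s)$, $L_s=\operatorname{fib}(\Tot^s\to\Tot^{s-1})$, and $I=\operatorname{fib}(f)$. The octahedral axiom gives fiber sequences $F_s\to F_{s-1}\to L_s$; if one knows only that $F_{s-1}$ is $s$-connective and $L_s$ is $s$-connective, the long exact sequence $\pi_sF_{s-1}\to\pi_sL_s\to\pi_{s-1}F_s\to\pi_{s-1}F_{s-1}=0$ allows $\pi_{s-1}F_s\neq0$, so connectivity \emph{degrades} rather than improves at each stage: starting from $F_0=M\otimes_BI$ (which is $1$-connective), this induction yields only that every $F_s$ is $0$-connective. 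Equivalently, $\operatorname{fib}(M\to\Tot^s)=\Tot^s\bigl(\operatorname{fib}(M\to X^\bullet)\bigr)$ has a finite filtration whose layers have connectivities $1,0,1,2,\dots$, whose minimum is $0$. Connectivity of the layers of a tower simply does not bound below the connectivity of the fiber of a map into that tower; there is large cancellation in $\Tot^s$ of the fiber object that no such estimate can see.

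What is actually needed is the identification $\operatorname{fib}(M\to\Tot^s)\simeq M\otimes_BI^{\otimes_B(s+1)}$, which is precisely the fact the paper invokes (\cite[Prop. 2.14]{MNN}). It is proved by induction using the cobar structure, not connectivity: under the identifications $F_{s-1}\simeq M\otimes_BI^{\otimes_Bs}$ and $L_s\simeq M\otimes_BI^{\otimes_Bs}\otimes_BC$, the map $F_{s-1}\to L_s$ is induced by the unit $B\to C$, so its fiber is $M\otimes_BI^{\otimes_B(s+1)}$. Once this is in place, the entire lemma (including your Mittag--Leffler statements) follows in one line, since $1$-connectivity of $I$ makes this fiber $(s+1)$-connective, hence $\pi_nM\to\pi_n\Tot^s$ an isomorphism for $s\geq n$. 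As written, your proof implicitly assumes this consequence at exactly the point where the real content lies; supply or cite it and the argument closes up into the paper's proof.
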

	\begin{proof}
	    Since the descent spectral sequence is constructed from the coskeletal filtration, the lemma follows from the fact that the fiber of the map
	    \[
	        M\to \operatorname{cosk}^s (M\otimes_B C^{\otimes_B(\bullet+1)})
	    \]
	    is $M\otimes_B I^{\otimes_B(s+1)}$ where $I$ is the fiber of $B\to C$. See, for example, \cite[Prop. 2.14]{MNN}.
	\end{proof}
	
	\begin{lemma}\label{lem:desus}
	    Suppose that a map of connective $\mathbb E_\infty$-ring spectra $f:B\to C$ induces a surjection on homotopy groups. Let $x\in F_\ast$ where $F=\operatorname{fib}(f)$. Then, $\sigma x \in \pi_\ast (C\otimes_BC)$, considered as an element in the $E_1$-page of the descent spectral sequence for $f$, is a permanent cycle and detects $x\in B$.
	\end{lemma}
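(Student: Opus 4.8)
The plan is to exhibit $\sigma x$ not as an abstract $E_1$-class but as the leading term of the genuine homotopy class $x\in\pi_\ast B$ for the descent filtration; since every class in the abutment has a leading term living in $E_\infty$, and $E_\infty$-classes are by definition represented by permanent cycles, both assertions of the lemma will follow at once from a single identification.

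First I would set up the descent tower $\cdots\to\operatorname{Tot}^s\to\operatorname{Tot}^{s-1}\to\cdots\to\operatorname{Tot}^0=C$, whose limit is $B$ by the preceding strong convergence lemma. From the computation in that proof, the fiber of $B\to\operatorname{Tot}^{s-1}$ is $F^{\otimes_B s}$; in particular $\operatorname{fib}(B\to\operatorname{Tot}^0)=F$, so the filtration-$s$ part of $\pi_\ast B$ is the image of $\pi_\ast(F^{\otimes_B s})\to\pi_\ast B$. Because $x\in\pi_\ast F$ and the composite $F\to B\to C$ is canonically null, $x$ lifts to filtration $\geq 1$ and hence has a well-defined leading term in $\operatorname{gr}^1\pi_\ast B=E_\infty^{1,\ast+1}$. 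The whole content of the lemma is to identify this leading term with $\sigma x$.

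To compute the leading term I would use the connecting map of the two-stage tower $B\to\operatorname{Tot}^1\to\operatorname{Tot}^0$. The fiber sequence associated to this composite reads
\[
    F^{\otimes_B 2}=\operatorname{fib}(B\to\operatorname{Tot}^1)\to F=\operatorname{fib}(B\to\operatorname{Tot}^0)\xrightarrow{\ \partial\ }\operatorname{fib}(\operatorname{Tot}^1\to\operatorname{Tot}^0),
\]
and the target is $\Sigma^{-1}N^1$, where $N^1=\operatorname{fib}\bigl(\mu\colon C\otimes_BC\to C\bigr)$ is the normalized object carrying the $s=1$ line of $E_1$ and $\mu$ is the codegeneracy. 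By construction the leading term of $x$ is the image $\partial(x)$, viewed in $\pi_{\ast+1}(N^1)\subseteq\pi_{\ast+1}(C\otimes_BC)=E_1^{1,\ast+1}$, so it remains to check that the desuspension of $\partial$ is the suspension map $\sigma$ of \eqref{eq:susdef}.

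This last identification is the heart of the argument and the step I expect to be the main obstacle. Both maps are assembled from the single datum that $F\to B\to C$ is null: the definition $\Sigma F=0\coprod_F0\to C\coprod_BC\to C\otimes_BC$ records precisely the nullhomotopy used to glue $\operatorname{Tot}^0$ into $\operatorname{Tot}^1$. I would make this precise by producing a map from the pushout square defining $\Sigma F$ to the relevant piece of the cosimplicial diagram, using that in $\mathbb E_\infty$-$B$-algebras the coproduct $C\coprod_BC$ is the tensor product $C\otimes_BC$; the passage between the normalized and unnormalized $s=1$ lines is harmless, since $\sigma x$ lies in the augmentation ideal and is annihilated by $\mu$. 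Once $\partial$ is identified with $\Sigma^{-1}\sigma$, the class $x\in\pi_\ast B$ is a genuine homotopy element whose image in $\operatorname{gr}^1\pi_\ast B$ is $\sigma x$; therefore $\sigma x$ survives to $E_\infty$, i.e.\ is a permanent cycle, and it detects $x$.
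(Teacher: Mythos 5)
Your proposal is correct and takes essentially the same route as the paper: the paper's proof likewise reduces to the first stage of the descent filtration (using the equalizer $\Eq(C\rightrightarrows C\otimes_BC)$ where you use $\Tot^1$ and the normalized fiber of $\mu$) and identifies the leading-term map $F\to\Omega(C\otimes_BC)$ with the adjoint of $\sigma$ by mapping the defining pushout data of $\Sigma F=0\sqcup_F 0$ into the level-one cosimplicial data, exactly as in your final paragraph. The only difference is presentational: the paper compresses your connecting-map analysis and comparison of squares into a single diagram chase through the intermediate object $\Eq(C\rightrightarrows C\sqcup_BC)$.
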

	\begin{proof}
	    It is enough to show that $x$ is detected by $\sigma x$ in the equalizer
	    \[
	        \Eq(C\rightrightarrows C\otimes_BC).
	    \]
	    This follows from chasing the following diagram
	    \[
	        \begin{tikzcd}
	            F\ar[r,equal]\ar[d]&\Eq(0\rightrightarrows 0\sqcup_F0)\ar[d]&\\
	            B\ar[r]&\Eq(C\rightrightarrows C\sqcup_BC)\ar[r]&\Eq(C\rightrightarrows C\otimes_BC)
	        \end{tikzcd}
	    \]
	    from $x\in F_\ast$.
	\end{proof}
	
	\subsection{Complex K-theory spectrum}
	Until the end of section 4, we shall assume that $p$ is a fixed odd prime number and that every spectrum is $p$-complete. For example, we shall write $\THH(\ku)$ instead of $\THH(\ku_p^\wedge)_p^\wedge$.
	
	We write $\ku$ for the connective cover of the complex K-theory spectrum $\KU$ equipped with the standard complex orientation $\MU\to\ku$, which can be lifted to be an $\mathbb E_\infty$ orientation according to \cite{JK}.
	
	We shall compute the descent spectral sequence for
	\[
	    \THH(\ku)\to\THH(\ku/\MU)=\THH(\ku)\otimes_{\THH(\MU)}\MU
	\]
	along $\THH(\MU)\to \MU$. More generally, $M=\mathbb F_p$ or $\mathbb Z_p$ with a canonical $\ku$-algebra structure, we shall compute the descent spectral sequence for
	\[
	\THH(\ku;M)\to\THH(\ku/\MU;M).
	\]
	Let $E_r^{\ast,\ast}(\THH(\ku;M))$ denote the $E_r$-page of this descent spectral sequence. Until the end of Section 4, let us write
	\begin{align*}
		\Gamma &:= \MU\otimes_{\THH(\MU)}\MU=\THH(\MU/\MU\otimes\MU)\\
		A&:= \THH(\ku/\MU).
	\end{align*}
	Using Propositions \ref{prop:hopf} and \ref{prop:adams}, we note a few things about the homotopy groups of these ring spectra.
	\begin{itemize}
	    \item $A_\ast$ and $\Gamma_\ast$ are commutative even-graded rings.
	    \item $\Gamma_\ast$ is a $\MU_\ast$-Hopf algebra and $A_\ast$ is a right $\Gamma_\ast$-comodule algebra. We shall write $\eta_R:A_\ast\to A_\ast\otimes\Gamma_\ast$ for the coaction map for reasons to be explained in Proposition \ref{prop:dss2}.
	    \item  $E_1^{\ast,\ast}(\THH(\ku))$ can be identified with the cobar complex
    	\[
    	     \CB_{\Gamma_\ast}(A_\ast)= (A_\ast \xrightarrow{D^0} A_\ast\otimes_{\MU_\ast}\Gamma_\ast\xrightarrow{D^1} A_\ast\otimes_{\MU_\ast}\Gamma_\ast\otimes_{\MU_\ast}\Gamma_\ast\xrightarrow{D^2}\cdots),
    	\]
    	where we write $D^0, D^1, \dots$ for the differentials in $\CB_{\Gamma_\ast}(A_\ast)$. These are the $d_1$ differentials in the descent spectral sequence.
	\end{itemize}
	
	\begin{remark}\label{rmk:evenf}
	     Suppose $B\to C$ is a map of $\mathbb E_\infty$-ring spectra. Instead of the coskeletal filtration of the descent diagram for $B\to C$, there is an alternative filtration using Whitehead covers. The $k$'th filtration of $B$ is given by
	    \[
	        \Tot(\tau_{\geq 2k} C^{\otimes_B (\bullet+1)}).
	    \]
	    If we further assume that $C^{\otimes_B(s+1)}$ has no odd homotopy groups for all $s\geq0$, then this filtration gives us a shearing of the descent spectral sequence in the sense that we have
	    \[
	        ^\backprime E_r^{k,3k-s} = E_{2r+1}^{s,2k}
	    \]
	    where the left-hand side is the spectral sequence associated with the new filtration and the right-hand side is the descent spectral sequence. More precisely, they are related by the d\'ecalage \cite{decalage} and doubling the speed of the filtration.
	
	    In our case of $B=\THH(\ku)$ and $C=\THH(\ku/\MU)$, the new filtration using Whitehead covers is an example of the \emph{even/motivic filtration} of Hahn, Raksit, and Wilson \cite[Def. 4.1.2]{HRW} since $C$ is \emph{evenly free} over $B$ in their sense.
	\end{remark}
	
	\begin{lemma}
	    $\CB_{\Gamma_\ast}(A_\ast)$ is a chain complex of free $\ku_\ast$-modules. Therefore, we have identifications
	    \begin{align*}
	        E_1(\THH(\ku;\mathbb Z_p)) &= \CB_{\Gamma_\ast}(A_\ast/\beta)\\
	        E_2(\THH(\ku;\mathbb Z_p)) &= \Ext_{\Gamma_\ast}(A_\ast/\beta)\\
	        E_1(\THH(\ku;\mathbb F_p)) &= \CB_{\Gamma_\ast}(A_\ast/(p,\beta))\\
	        E_2(\THH(\ku;\mathbb F_p)) &= \Ext_{\Gamma_\ast}(A_\ast/(p,\beta)).
	    \end{align*}
	\end{lemma}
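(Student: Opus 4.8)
The plan is to prove the freeness statement first and then deduce the four identifications by a universal-coefficient argument, since the whole point of the freeness is that forming $M$-coefficients will commute with taking homotopy groups and with the cobar differential.

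First I would record the two freeness inputs. Applying Proposition \ref{prop:hopf}(a) to the pair $S=\MU$, $R=\ku$, whose unit $\MU_\ast\to\ku_\ast=\mathbb Z_p[\beta]$ is a quotient of an even ring by a regular sequence, shows that $A_\ast=\THH_\ast(\ku/\MU)$ is a free $\ku_\ast$-module. Likewise, by Example \ref{ex:ha}, which identifies $\Gamma=\MU\otimes_{\THH(\MU)}\MU$ with $\THH(\MU/\MU\otimes\MU)$, together with Proposition \ref{prop:hopf}(a) applied to $S=\MU\otimes\MU$, $R=\MU$, the ring $\Gamma_\ast$ is a free $\MU_\ast$-module. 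I would then combine these on each cobar term
\[
\CB^{s}_{\Gamma_\ast}(A_\ast)=A_\ast\otimes_{\MU_\ast}\Gamma_\ast^{\otimes_{\MU_\ast}s}.
\]
A tensor product of free modules over the commutative ring $\MU_\ast$ is free, so $\Gamma_\ast^{\otimes_{\MU_\ast}s}\cong\bigoplus_\alpha\Sigma^{n_\alpha}\MU_\ast$; tensoring with $A_\ast$ over $\MU_\ast$ gives $\bigoplus_\alpha\Sigma^{n_\alpha}A_\ast$, a direct sum of shifted copies of $A_\ast$ on which $\ku_\ast$ (in particular $\beta$) acts through the left factor $A_\ast$. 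Since $A_\ast$ is $\ku_\ast$-free, so is this direct sum, proving that $\CB^s_{\Gamma_\ast}(A_\ast)$ is a free $\ku_\ast$-module for every $s$.

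For the identifications I would use that $\THH(\ku;M)=\THH(\ku)\otimes_\ku M$ and that the entire descent cosimplicial diagram base-changes along the $\ku$-linear functor $-\otimes_\ku M$; writing $X^\bullet$ for the descent diagram of $\THH(\ku)$, the descent diagram of $\THH(\ku;M)$ is $M\otimes_\ku X^\bullet$. On the $s$-th term the universal-coefficient (Tor) spectral sequence computing $\pi_\ast(M\otimes_\ku X^s)$ from $\Tor^{\ku_\ast}_\ast(M_\ast,\pi_\ast X^s)$ collapses, because $\pi_\ast X^s=\CB^s_{\Gamma_\ast}(A_\ast)$ is $\ku_\ast$-free and the higher $\Tor$ groups vanish. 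Hence, as complexes of $\ku_\ast$-modules,
\[
E_1(\THH(\ku;M))=M_\ast\otimes_{\ku_\ast}\CB_{\Gamma_\ast}(A_\ast).
\]

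Finally I would identify the right-hand side. Because $\beta$ and $p$ originate from the base $\THH(\ku)$, they are coaction-trivial, i.e.\ $\eta_R(\beta)=\beta\otimes1$ and similarly for $p$; since $\eta_R$ is a map of comodule algebras it follows that $\beta A_\ast$ and $(p,\beta)A_\ast$ are subcomodules, so $A_\ast/\beta$ and $A_\ast/(p,\beta)$ are genuine quotient $\Gamma_\ast$-comodules. Using flatness of $\Gamma_\ast^{\otimes_{\MU_\ast}s}$ over $\MU_\ast$ to commute the quotient past the tensor product then gives
\[
M_\ast\otimes_{\ku_\ast}\CB^s_{\Gamma_\ast}(A_\ast)=\CB^s_{\Gamma_\ast}(A_\ast/\beta)\ \ (M=\mathbb Z_p),\qquad \CB^s_{\Gamma_\ast}(A_\ast/(p,\beta))\ \ (M=\mathbb F_p),
\]
and taking cohomology yields the stated $\Ext$ descriptions of the $E_2$-pages. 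The step requiring the most care is exactly this base-change: one must know the cobar terms are $\ku_\ast$-free so that $-\otimes_\ku M$ commutes with $\pi_\ast$ and with the cobar differential, and one must check the comodule compatibility of $A_\ast/\beta$ and $A_\ast/(p,\beta)$ — both of which are supplied by the freeness input and the coaction-triviality of $\beta$ and $p$. Everything else is a formal consequence.
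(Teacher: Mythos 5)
Your proof is correct and takes essentially the same route as the paper: the paper's entire proof consists of the observation (via Example \ref{ex:ha}, i.e.\ Proposition \ref{prop:hopf}) that $A_\ast$ is free over $\ku_\ast$ and $\Gamma_\ast$ is free over $\MU_\ast$, leaving the tensor-product, base-change, and comodule-descent steps implicit. Your write-up supplies exactly those implicit steps, correctly.
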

	\begin{proof}
	    From Example \ref{ex:ha}, $A_\ast$ is free as a $\ku_\ast$-module and $\Gamma_\ast$ is free as a $\MU_\ast$-module.
	\end{proof}
	
	\begin{notation}
	    From this point, all ordinary modules or ordinary rings will naturally be modules or algebras over $\MU_\ast$, and all tensor product $\otimes$ will be over $\MU_\ast$ unless the base ring is explicitly written. We shall continue to write the base for the tensor product of spectra unless it is over the sphere spectrum.
	\end{notation}

	There is an alternative description of the descent spectral sequence. The augmented cosimplicial diagram
	\[
		S^0 \to \MU\rightrightarrows \MU^{\otimes2} \rightthreearrow \cdots
	\]
	induces an augmented cosimplicial diagram
	\begin{equation}\label{eq:dcs2}
		\THH(\ku)\to\THH(\ku/\MU)\rightrightarrows \THH(\ku/\MU^{\otimes2}) \rightthreearrow \cdots
	\end{equation}
	of $\mathbb E_\infty$-ring spectra. The following proposition is immediate from the definitions.
	
	\begin{proposition}\label{prop:dss2}
	    The augmented cosimplicial diagram \eqref{eq:dcs2} is equivalent to the descent diagram for $\THH(\ku)\to\THH(\ku/\MU)$. Furthermore, under the identification $\THH_\ast(\ku/\MU^{\otimes2})=A_\ast\otimes\Gamma_\ast$ the two maps
	    \[
	    \eta_L,\eta_R:\THH_\ast(\ku/\MU)\to\THH_\ast(\ku/\MU^{\otimes2})
	    \]
	    induced by the left and right units $\eta_L,\eta_R:\MU\to \MU^{\otimes2}$ can be identified with $\Id\otimes1$ and the coaction map $A_\ast\to A_\ast\otimes\Gamma_\ast$, respectively.
	\end{proposition}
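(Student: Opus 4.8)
The plan is to realize both augmented cosimplicial objects as the image of a single coproduct-preserving functor applied to the \v{C}ech nerve (descent diagram) of $S^0\to\MU$, and then to read off the two coface maps in cosimplicial degree one by bookkeeping through the base-change identity \eqref{eq:thhid}. First I would introduce the functor $G:=\THH(\ku/-)$ from the slice category $\mathrm{CAlg}_{/\ku}$ of $\mathbb E_\infty$-rings $X$ equipped with a map $X\to\ku$ (so that $\ku$ is an $X$-algebra and $\THH(\ku/X)$ is defined) to the category of $\mathbb E_\infty$-$\THH(\ku)$-algebras; the target structure is present because a map $S^0\to X$ induces $\THH(\ku)=\THH(\ku/S^0)\to\THH(\ku/X)$. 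The key input is that $G$ preserves finite coproducts: applying \eqref{eq:thhid} with $R_1=R_2=R_3=\ku$, $S_1=S^0$, $S_2=X$, $S_3=Y$ gives
\[
  G(X\otimes_{S^0}Y)=\THH(\ku/X\otimes_{S^0}Y)=\THH(\ku/X)\otimes_{\THH(\ku)}\THH(\ku/Y)=G(X)\otimes_{\THH(\ku)}G(Y),
\]
and $G(S^0)=\THH(\ku)$ is the initial object.

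Next I would note that the augmented cosimplicial ring $S^0\to\MU^{\otimes(\bullet+1)}$ is precisely the \v{C}ech nerve of $S^0\to\MU$, and that it lives in $\mathrm{CAlg}_{/\ku}$: each $\MU^{\otimes(s+1)}$ maps to $\ku$ by multiplying the $s+1$ copies of the orientation, compatibly with the cofaces (which insert a unit) and codegeneracies (which multiply adjacent factors). A functor preserving finite coproducts preserves \v{C}ech nerves, since these are built degreewise from coproducts together with unit and fold maps; hence $G$ carries the \v{C}ech nerve of $S^0\to\MU$ to that of $G(S^0)=\THH(\ku)\to G(\MU)=\THH(\ku/\MU)$, which is the descent diagram of Definition \ref{def:dss}. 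As $G(\MU^{\otimes(\bullet+1)})=\THH(\ku/\MU^{\otimes(\bullet+1)})$ by definition, this identifies the descent diagram with the augmented cosimplicial diagram \eqref{eq:dcs2}, proving the first assertion.

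For the second assertion I would make the degree-one identification explicit by regrouping \eqref{eq:thhid} so as to split off $\Gamma=\THH(\MU/\MU^{\otimes2})$. Taking $R_1=\MU\to R_2=\ku$ the orientation, $R_1=\MU\to R_3=\MU$ the identity, $S_2=\MU$, $S_3=\MU^{\otimes2}$ with $S_1=\MU\to S_3$ the right unit and $S_3\to\MU$ the multiplication yields
\[
  \THH(\ku/\MU^{\otimes2})=\THH(\ku/\MU)\otimes_{\MU}\THH(\MU/\MU^{\otimes2}),
\]
so that on homotopy $\THH_\ast(\ku/\MU^{\otimes2})=A_\ast\otimes\Gamma_\ast$. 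The two coface maps are $G(\eta_L)$ and $G(\eta_R)$. One checks that $\eta_L$ leaves the $A$-factor fixed and fills the $\Gamma$-factor with its unit, whence $G(\eta_L)=\Id\otimes1$; consequently $G(\eta_R)$ is the remaining coface map, which is by definition the comodule coaction $A_\ast\to A_\ast\otimes\Gamma_\ast$ underlying the cobar description of Proposition \ref{prop:adams}. This is exactly what justifies denoting the coaction by $\eta_R$.

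The step I expect to be the main obstacle is promoting the levelwise base-change equivalences of \eqref{eq:thhid} to an honest equivalence of augmented cosimplicial objects, rather than a matching of terms one degree at a time. Concretely this means checking that $G=\THH(\ku/-)$ is functorial and symmetric monoidal for the cocartesian structures (that \eqref{eq:thhid} is natural in all cosimplicial structure maps) and that $\ku$ is an algebra over the entire cosimplicial ring $\MU^{\otimes(\bullet+1)}$ in a coherent way. Once $G$ is known to preserve coproducts and hence \v{C}ech nerves, the first assertion is immediate, and the only remaining work in the second is the routine index bookkeeping identifying which unit induces which map.
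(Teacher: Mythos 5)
The paper states this proposition with no proof at all, treating it as routine base-change bookkeeping, so you are supplying an argument the author omitted rather than paralleling one; your overall strategy is the right one. Realizing both augmented cosimplicial objects as the image of $G=\THH(\ku/-)\colon \mathrm{CAlg}_{/\ku}\to\mathrm{CAlg}_{\THH(\ku)}$ applied to the \v{C}ech conerve of $S^0\to\MU$, and deducing the first assertion from the fact that $G$ preserves finite coproducts (your instantiation of \eqref{eq:thhid} is correct, and pushouts over the initial object are coproducts), is sound and essentially complete. The coherence worries you flag at the end are handled by the universal property: the conerve is a left Kan extension along $\Delta_+^{\leq 0}\subset\Delta_+$, so lifting it to $\mathrm{CAlg}_{/\ku}$ requires only the single commuting triangle $S^0\to\MU\to\ku$, and a finite-coproduct-preserving functor preserves this Kan extension because the relevant pointwise colimits are finite coproducts. (One small omission: passing from $\THH(\ku/\MU^{\otimes2})\simeq A\otimes_{\MU}\Gamma$ to $\THH_\ast(\ku/\MU^{\otimes2})=A_\ast\otimes\Gamma_\ast$ uses the K\"unneth isomorphism, valid since $\Gamma_\ast$ is free over $\MU_\ast$ by Proposition \ref{prop:hopf}.)

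There is, however, a concrete error in the identification of the coface maps, precisely at the step you wave off with ``one checks.'' With your gluing data --- $S_1=\MU\to S_3=\MU^{\otimes2}$ the \emph{right} unit $\eta_R$ and $S_1\to S_2=\MU$ the identity --- the pushout $S_2\otimes_{S_1}S_3$ is canonically $S_3=\MU^{\otimes2}$, and the structural inclusion $S_2\to S_2\otimes_{S_1}S_3$ is the gluing map itself, namely $\eta_R$. Naturality of \eqref{eq:thhid} identifies the first-factor inclusion $\Id\otimes1\colon A\to A\otimes_{\MU}\Gamma$ with $\THH$ applied to the pair map $(\ku,\MU)\to(\ku,\MU^{\otimes2})$ whose ring map is this structural inclusion. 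So under \emph{your} identification it is $G(\eta_R)$ that equals $\Id\otimes1$ and $G(\eta_L)$ that becomes the coaction --- the opposite of what you assert and of what the proposition states. The fix is simply to glue along the left unit: take $S_1\to S_3$ to be $\eta_L$, so that the $A$-factor is attached to the left tensor factor; then $G(\eta_L)=\Id\otimes1$, and $G(\eta_R)$ is the remaining coface, which (as you correctly observe) is what one takes as the coaction underlying Propositions \ref{prop:adams}. This orientation is not a cosmetic matter: the entire point of the ``furthermore'' clause is to guarantee that the coaction coface is the one induced by the right-factor inclusion $\MU\to\MU^{\otimes2}$, so that naturality of $\sigma^2$ yields $\eta_R(\sigma^2\alpha)=\sigma^2(\eta_R\alpha)$ with $\eta_R$ the classical right unit of $\MU_\ast\MU$; that compatibility is exactly what the proof of Lemma \ref{lem:coaction} consumes (Hazewinkel right-unit formulas), and with the labels swapped those formulas would be fed to the wrong map.
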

	
	\begin{definition}\label{def:laz}
	    Following the classical notation, we write $x_1,x_2,\dots$ with $|x_i|=2i$ for the polynomial generators of the Lazard ring $\MU_\ast$, and we write $b_1,b_2,\dots$ for the generators of $\MU_\ast\MU = \MU_\ast[b_1,b_2,\dots]$ as a $\MU_\ast$-algebra where $\MU_\ast\MU$ is given the algebra structure by the left unit $\MU_\ast\to\MU_\ast\MU$ and $b_i$'s vanish under the multiplication map $\MU_\ast\MU\to\MU_\ast$. There are many choices for the generators, and for now, we only require that $x_1$ maps to $\beta$ under
	    \[
	        \MU_\ast\to \ku_\ast=\mathbb Z_p[\beta]
	    \]
	    and that $x_i$ maps to zero for $i\geq 2$. We shall give more specific choices of generators in Lemma \ref{lem:coaction}.
	    
	    For the lightness of notations, we shall often write $v_k$ instead of $x_{p^k-1}$ when $k\geq 2$ and $t_k$ instead of $b_{p^k-1}$ when $k\geq 1$. Note that we do not write $v_1$ for $x_{p-1}$ because $x_{p-1}$ maps to $0$ in $\ku_\ast$, while it is more natural that a class named $v_1$ maps to a nonzero class in $\ku_\ast$.
	\end{definition}
	
	\begin{notation}
	    The elements $x_i\in\MU_\ast$ for $i\geq 2$ and $b_i\in\MU_\ast\MU$ for $i\geq 1$ admit double suspensions $\sigma^2 x_i\in A_\ast$ and $\sigma^2 b_i\in \Gamma_\ast$. In the cobar complex $\CB_{\Gamma_\ast}(A_\ast)$, we shall write $\sigma^2 x_i$ (or $\sigma^2 v_i$) for the corresponding element in either $A_\ast$ or $A_\ast\otimes\Gamma_\ast$. Which element the notation is referring to will be clear from the context. Similarly, we shall write $\sigma^2 b_i$ (or $\sigma^2 t_i$) for the corresponding element in $A_\ast\otimes\Gamma_\ast$. We will not need any notation for elements in $A_\ast\otimes\Gamma_\ast^{\otimes s}$ for $s\geq 2$.
	\end{notation}
	
	The following two remarks hold for any choice of generators.
	
	\begin{remark}\label{rmk:desus}
	    Consider the descent spectral sequence for $\THH(\MU)\to\THH(\MU/\MU)$, whose $E_1$-page is $\CB_{\Gamma_\ast}(\MU_\ast)$. Since
	    \[
	        \THH_\ast(\MU) = \Lambda_{\MU_\ast}(\sigma b_1,\sigma b_2,\dots),
	    \]
	    we can see, by Lemma \ref{lem:desus}, that the element $\sigma^2 b_i\in \CB_{\Gamma_\ast}^1(\MU_\ast)$ is a permanent cycle in the descent spectral sequence for any $i$ and that it detects $\sigma b_i\in\THH_\ast(\MU)$.
	    
	    Mapping to the descent spectral sequence for $\THH(\ku)\to\THH(\ku/\MU)$, we can see that $\sigma^2 b_i\in \CB^1_{\Gamma_\ast}(A_\ast)$ is a permanent cycle in this descent spectral sequence and detects $\sigma b_i\in\THH_\ast(\ku)$. Here, $\sigma b_i$ is the suspension of the class $b_i\in\ku_\ast\ku$, which is defined to be the image of $b_i\in\MU_\ast\MU$.
	\end{remark}
	
	\begin{remark}\label{rmk:mul}
	    There is a multiplicative structure on the descent spectral sequence, which is represented in $E_1=\CB_{\Gamma_\ast}(A_\ast)$ by the standard formula for cup product of cocycles. We shall only be interested in the multiplication by $\sigma^2 b_i\in A_\ast\otimes\Gamma_\ast$, which is a permanent cycle by the previous remark. In this case, we can check that the cup product formula in the $E_1$-page for $x\in A_\ast\otimes\Gamma_\ast^{\otimes s}$ and $\sigma^2 b_i$ equals $x\otimes\sigma^2 b_i\in A_\ast\otimes\Gamma_\ast^{\otimes (s+1)}$.
	\end{remark}

	\begin{lemma}\label{lem:dl}
	    For any choice of generators in Definition \ref{def:laz}, we have
	    \[
	        (\sigma^2 v_k)^p \equiv \sigma^2 v_{k+1} \pmod{p,\beta}
	    \]
	    for $k\geq 2$ and
	    \[
	        (\sigma^2 x_{p-1})^p \equiv\sigma^2 v_2 \pmod{p,\beta}
	    \]
	    in $A_\ast$ up to a $p$-adic unit. Similarly, we have
	    \[
	        (\sigma^2 b_i)^p \equiv \sigma^2 b_{pi+p-1} \pmod{p,x_1,x_2,\ldots}
	    \]
	    for $i\geq 1$ in $\Gamma_\ast$ up to a $p$-adic unit.
	\end{lemma}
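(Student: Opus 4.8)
The plan is to prove both congruences by the same mechanism, which is part (c) of Proposition \ref{prop:hopf}: in each case we are in a setting where the $p$-th power of a primitive element is again, modulo $p$, a linear combination of the primitive generators, and then we pin down \emph{which} linear combination by a degree/indecomposability argument together with the known structure of the Hopf algebra.

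\textbf{The $b_i$ statement.} For $\sigma^2 b_i\in\Gamma_\ast=\MU_\ast[b_1,b_2,\dots]^{\text{(Hopf)}}$, I would apply Proposition \ref{prop:hopf} with $S=\THH(\MU)$ and $R=\MU$ (so $\Gamma=\MU\otimes_{\THH(\MU)}\MU$ and $R_\ast=\MU_\ast$), where the regular sequence is the $b_i$'s; part (c) then tells me that $(\sigma^2 b_i)^p$ is a $\MU_\ast$-linear combination of the $\sigma^2 b_j$'s modulo $p$. To identify the combination I reduce further modulo the generators $x_1,x_2,\dots$ of $\MU_\ast$, i.e.\ work over $\mathbb F_p=\MU_\ast/(p,x_1,x_2,\dots)$. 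By degree count, $|\sigma^2 b_i|=2i+2$, so $|(\sigma^2 b_i)^p|=2pi+2p$, which matches $|\sigma^2 b_{pi+p-1}|=2(pi+p-1)+2=2pi+2p$. Thus the only primitive generator in the right degree is $\sigma^2 b_{pi+p-1}$ (all other $\sigma^2 b_j$ live in different degrees, and the $\MU_\ast$-coefficients have been killed), and the claim reduces to showing the coefficient is a $p$-adic unit, i.e.\ nonzero mod $p$. For this I would use the classical dual-Steenrod-algebra computation: over $\mathbb F_p$ the relevant $p$-th power operation on the primitives of $\mathcal A_\ast$ (equivalently the $\xi$- or $t$-generators) is governed by the Frobenius on $\MU_\ast\MU/(p,I)\cong\mathcal A_\ast$, under which $\bar t_i^{\,p}$ is, up to a unit, $\bar t_{i+1}$ in the indexing that sends $t_k=b_{p^k-1}$; translating to the $b$-indexing gives exactly the shift $i\mapsto pi+p-1$. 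The nonvanishing of the coefficient is the substantive input here.

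\textbf{The $v_k$ statement.} For the $A_\ast$ side I would use Proposition \ref{prop:hopf} with $S=\MU$, $R=\ku$, so $A=\THH(\ku/\MU)$ and $R_\ast=\ku_\ast=\mathbb Z_p[\beta]$; the regular sequence is $x_1-\beta,x_2,x_3,\dots$, whose double suspensions are the $\sigma^2 x_i$ (note $\sigma^2(x_1-\beta)=0$ since $\beta\in\ku_\ast$ is not in the fiber, consistent with $x_1\mapsto\beta$). Part (c) gives that $(\sigma^2 v_k)^p$ is a $\ku_\ast$-linear combination of $\sigma^2 x_i$'s mod $p$; reducing further mod $\beta$ kills the $\ku_\ast$-coefficients down to $\mathbb F_p$. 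A degree check with $|\sigma^2 v_k|=|\sigma^2 x_{p^k-1}|=2(p^k-1)+2=2p^k$ gives $|(\sigma^2 v_k)^p|=2p^{k+1}=|\sigma^2 x_{p^{k+1}-1}|=|\sigma^2 v_{k+1}|$, so again only $\sigma^2 v_{k+1}$ can appear. That the coefficient is a $p$-adic unit follows from the Milnor/Araki-type relation $v_{k}^{\,p}\equiv v_{k+1}$ modulo $(p,\text{lower }v)$ in $\BP_\ast$ transported through the suspension, which is compatible with the $b_i$ computation under the map $\Gamma_\ast\to A_\ast$.

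\textbf{Main obstacle.} The routine part is the degree bookkeeping that isolates the single candidate generator; \emph{the hard part is verifying that the coefficient is a unit rather than zero modulo $p$}. Proposition \ref{prop:hopf}(c) only guarantees \emph{some} linear combination, and a priori the coefficient of $\sigma^2 b_{pi+p-1}$ (resp.\ $\sigma^2 v_{k+1}$) could vanish mod $p$, in which case $(\sigma^2 b_i)^p\equiv 0$. Ruling this out is exactly where one must invoke the explicit structure of the dual Steenrod algebra and the classical formula for the $p$-th power operation on its primitives; this is the one place I would not treat as bookkeeping, and the consistency between the two congruences (via the map $\Gamma_\ast\to A_\ast$ sending $\sigma^2 b_{p^k-1}\mapsto \sigma^2 v_k$ up to units) is what makes both coefficients simultaneously nonzero.
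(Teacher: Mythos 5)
Your reduction of the lemma — invoke Proposition \ref{prop:hopf}(c) to write the $p$-th power as a linear combination of the primitive generators, then count degrees modulo $(p,\beta)$ (resp.\ $(p,x_1,x_2,\dots)$) to isolate the single candidate $\sigma^2 v_{k+1}$ (resp.\ $\sigma^2 b_{pi+p-1}$) — is sound, and you correctly identify the real content as showing the remaining coefficient is a unit. But your justification of that step rests on algebraic facts that are false. You appeal to ``the Frobenius on $\MU_\ast\MU/(p,I)\cong\mathcal A_\ast$, under which $\bar t_i^{\,p}$ is, up to a unit, $\bar t_{i+1}$,'' and to ``the Milnor/Araki-type relation $v_k^{\,p}\equiv v_{k+1}$ modulo $(p,\text{lower }v)$ in $\BP_\ast$.'' No such relations exist: $\MU_\ast\MU/(p,x_1,x_2,\dots)\cong\mathbb F_p[b_1,b_2,\dots]$, the polynomial part of the dual Steenrod algebra, and $\BP_\ast=\mathbb Z_{(p)}[v_1,v_2,\dots]$ are free (graded) polynomial algebras, so the $p$-th power of a generator is decomposable and can never be a unit multiple of another generator. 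Indeed the congruence of the lemma is \emph{false before suspension}: $b_i^{\,p}\neq b_{pi+p-1}$ in $\MU_\ast\MU/(p,I)$ and $v_k^{\,p}\not\equiv v_{k+1}$ in $\BP_\ast/p$. The statement holds only after applying $\sigma^2$, so it cannot be ``transported through the suspension'' from any identity in these classical Hopf algebroids.

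The missing ingredient is topological: power operations. This is exactly what the paper uses. Since $\Gamma\otimes_{\MU}\mathbb F_p=\mathbb F_p\otimes_{\THH(\MU;\mathbb F_p)}\mathbb F_p$ is an $\mathbb E_\infty$-$\mathbb F_p$-algebra, the $p$-th power of the even class $\sigma^2 b_i$ is the bottom Dyer--Lashof operation, $(\sigma^2 b_i)^p=Q_0(\sigma^2 b_i)$, and stability of the lower-indexed operations under suspension gives $Q_0(\sigma^2 b_i)=\sigma^2(Q_2 b_i)$. The operation $Q_2 b_i$ is \emph{not} the Frobenius of $b_i$; it is computed in $H_\ast(BU;\mathbb F_p)$ by Kochman \cite{Kochman} to be $b_{pi+p-1}$ modulo decomposables, and $\sigma^2$ kills decomposables --- that is where the unit coefficient comes from, and the $A_\ast$ case is handled the same way in \cite[Prop.~2.5.3]{HW}. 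Finally, your consistency device --- a map $\Gamma_\ast\to A_\ast$ sending $\sigma^2 b_{p^k-1}\mapsto\sigma^2 v_k$ --- does not exist: the $t$'s and $v$'s are related through the $\Gamma_\ast$-coaction on $A_\ast$ (Lemma \ref{lem:coaction}(d)), not through an algebra map, and the natural map $\Gamma\to\THH(\ku/\MU)$ kills $\sigma^2 b_i$ because $b_i$ dies under the multiplication $\MU_\ast\MU\to\MU_\ast$. As written, your argument does not rule out $(\sigma^2 b_i)^p\equiv 0$, so the proof has a genuine gap precisely at the step you flagged as hard.
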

	\begin{proof}
	    For $A_\ast$, the proof is the same as the proof of \cite[Prop. 2.5.3]{HW}. For $\Gamma_\ast$, it is similar and we shall sketch the proof. We wish to show that $(\sigma^2 v_k)^p=\sigma^2 v_{k+1}$ in
	    \[
	        \Gamma_\ast \otimes_{\MU_\ast} \mathbb F_p = \pi_\ast (\Gamma\otimes_\MU\mathbb F_p).
	    \]
	    Since base changing along $\MU\to\mathbb F_p$ is a symmetric monoidal functor, we have
	    \[
	        \Gamma\otimes_{\MU}\mathbb F_p = \mathbb F_p\otimes_{\THH(\MU;\mathbb F_p)}\mathbb F_p
	    \]
	    where $\THH(\MU;\mathbb F_p) = \mathbb F_p\otimes_{\mathbb F_p\otimes\MU}\mathbb F_p$ again by base change. By the stability of Dyer-Lashof operations, we have
	    \[
	        (\sigma^2 b_i)^p = Q_0(\sigma^2 b_i) = \sigma^2 (Q_2 b_i).
	    \]
	    Then, the statement follows from the computation of the operation $Q_2$ in $(\mathbb F_p)_{\ast}\MU=H_\ast(BU;\mathbb F_p)$, done in \cite[Thm. 6]{Kochman}.
	\end{proof}
	
	\begin{lemma}\label{lem:coaction}
		We can choose the generators $x_2,x_3,\dots,b_1,b_2,\dots$ so that the following properties hold.
		\begin{enumerate}
		    \item There is a sequence of $p$-adic units $\delta_0=1, \delta_1,\delta_2,\dots\in\mathbb Z_p^\times$ such that $(\sigma^2 v_2)^{p^k}\equiv\delta_k\sigma^2 v_{k+2}\pmod p$ for $k\geq 0$ in $A_\ast$.
		    \item We have $(\sigma^2 t_1)^{p^k}\equiv\delta_k\sigma^2 t_{k+1}\pmod p$ for $k\geq0$ in $\Gamma_\ast$ with the same sequence $\delta_1,\delta_2,\dots$ as in (a).
		    \item We have $(\sigma^2 b_1)^p \equiv \sigma^2 b_{2p-1} \pmod p$ in $\Gamma_\ast$.
		    \item The coaction of the element $\sigma^2 v_k\in A_\ast$ is given as
		    \[
		        \eta_R \sigma^2 v_k = \sigma^2 v_k + p\sigma^2 t_k + \beta^{p^k-p^{k-1}}\sigma^2 t_{k-1}
		    \]
		    and the coaction of the element $\sigma^2 x_{p-1}$ is given as
		    \[
		        \eta_R \sigma^2 x_{p-1} = \sigma^2 x_{p-1}+p\sigma^2 t_1 + \beta^{p-2}\sigma^2 b_1.
		    \]
		    \item There is a constant $\delta'$ such that the coaction of the element $\sigma^2 x_{2p-1}$ is given as
		    \[
		        \eta_R\sigma^2 x_{2p-1} = \sigma^2 x_{2p-1} + \sigma^2 b_{2p-1} + \delta' \beta^p\sigma^2 t_1.
		    \]
		    \item The constant $\delta'$ in (e) is a $p$-adic unit.
		\end{enumerate}
	\end{lemma}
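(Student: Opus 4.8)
The plan is to construct the two families of generators by one simultaneous induction, using a single source of flexibility together with the formal group law of $\ku$. By Lemma~\ref{lem:sus} the operator $\sigma^2$ is $\MU_\ast$-linear and annihilates only $(F_\ast)^2$; consequently a decomposable change of generator $x_n\mapsto x_n-c\,x_1^{c'}x_{n-c'}$ (still a legitimate polynomial generator) changes $\sigma^2 x_n$ by exactly $-c\,\beta^{c'}\sigma^2 x_{n-c'}$, and likewise $b_n\mapsto b_n-c\,x_1^{c'}b_{n-c'}$ changes $\sigma^2 b_n$ by $-c\,\beta^{c'}\sigma^2 b_{n-c'}$. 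This ``triangular'' freedom lets me subtract off lower suspension classes at will. The numerics come from the multiplicative formal group law: writing $\log(X)=\sum_{n\ge0}m_nX^{n+1}$ for the universal logarithm, the image of $m_n$ in $\ku_\ast\otimes\mathbb Q$ is $\pm\beta^n/(n+1)$, so the $p$-adic valuation of its denominator is $\val_p(n+1)$, positive precisely when $n+1$ is a power of $p$. I will read the coaction off the right-unit identity $\sum_n\eta_R(m_n)X^{n+1}=\sum_k m_k\,b(X)^{k+1}$, with $b(X)=\sum_{i\ge0}b_iX^{i+1}$, after applying $\sigma^2$.

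First I would treat the low generators directly. Expanding the right-unit identity and retaining the part linear in the $b_i$, the coefficient of $b_j$ in $\eta_R m_n$ is $(n-j+1)m_{n-j}$, whose image in $\ku_\ast$ is the \emph{integral} class $\pm\beta^{n-j}$. Writing the integral generator as $x_n=\lambda_n m_n+(\text{decomposables})$, the prime content of $\lambda_n$ is governed by $\val_p(n+1)$: for $n=p-1$ and $n=p^k-1$ the factor of $p$ in $\lambda_n$ produces the $p\,\sigma^2 t_k$ term of~(d), while for $n=2p-1$---where $2p$ is \emph{not} a prime power, so $\lambda_{2p-1}$ is a $p$-adic unit---one obtains the shape of~(e). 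Using the triangular freedom to delete every unwanted $\sigma^2 b_j$ and rescaling each generator by a unit, this pins down~(d) for $\sigma^2 x_{p-1}$ and $\sigma^2 v_2$ (with the exceptional $\beta^{p-2}\sigma^2 b_1$ replacing the meaningless $\sigma^2 t_0$ in the $k=1$ case) and the two-term formula~(e). I also fix $b_{2p-1}$ so that $(\sigma^2 b_1)^p=\sigma^2 b_{2p-1}$ exactly modulo $p$, which is~(c).

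Next I would pass to higher $k$. By Proposition~\ref{prop:hopf}(c) and Lemma~\ref{lem:dl}, $(\sigma^2 v_2)^{p^k}$ is an $\mathbb F_p[\beta]$-linear combination of suspension classes whose $\beta$-free leading term is a unit multiple $\delta_k\sigma^2 v_{k+2}$; absorbing the remaining $\beta$-divisible classes into the definition of $v_{k+2}=x_{p^{k+2}-1}$ by the triangular freedom gives~(a) and defines $\delta_k$. To obtain~(b) with the \emph{same} $\delta_k$, I apply the ring map $\eta_R$ to the relation $(\sigma^2 v_2)^{p^k}=\delta_k\sigma^2 v_{k+2}$: modulo $p$ the left side is $(\eta_R\sigma^2 v_2)^{p^k}\equiv(\sigma^2 v_2)^{p^k}+\beta^{p^{k+2}-p^{k+1}}(\sigma^2 t_1)^{p^k}$, while the right side is $\delta_k\,\eta_R\sigma^2 v_{k+2}$. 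Comparing the two expansions simultaneously determines $\eta_R\sigma^2 v_{k+2}\bmod p$ and, after cancelling the common power of $\beta$ (a nonzerodivisor mod $p$, as $A_\ast$ is $\ku_\ast$-free), forces $(\sigma^2 t_1)^{p^k}=\delta_k\sigma^2 t_{k+1}$; this gives~(b) and the mod-$p$ reduction of~(d) for all $k$.

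Finally, for~(f): in $\eta_R\sigma^2 x_{2p-1}$ the coefficient of $\sigma^2 b_{2p-1}$ and the coefficient $\delta'$ of $\sigma^2 t_1$ both arise from the single rational factor $\lambda_{2p-1}$; since the former has been normalized to $1$, that factor is a unit, and therefore so is $\delta'$ (indeed $\delta'=-1$ after this normalization, up to the decomposable corrections). The step I expect to be hardest is the \emph{integral} determination of the coefficients in~(d)---in particular, upgrading the mod-$p$ formula to the exact coefficient $p$ on $\sigma^2 t_{k+2}$ and the unit (rather than $p$-divisible) coefficient on $\sigma^2 t_{k+1}$. When $n=p^k-1$ the denominator of $m_n$ is $p^k$, so the decomposable part of $x_{p^k-1}$ needed to land in $\ker(\MU_\ast\to\ku_\ast)$ carries denominators as large as $p^{k-1}$; these corrections are \emph{not} of the form $(F_\ast)^2$, so they survive $\sigma^2$ and contribute to the coaction. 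Tracking how the powers of $p$ and of $\beta$ redistribute through them---and verifying that a single coherent choice of the $x_i$ and $b_i$ realizes~(a)--(f) at once---is the delicate heart of the argument.
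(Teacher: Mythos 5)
Your proposal has a genuine gap at exactly the point the paper found hardest: part (f). Your argument is that the coefficient of $\sigma^2 b_{2p-1}$ and the coefficient $\delta'$ of $\beta^p\sigma^2 t_1$ ``both arise from the single rational factor $\lambda_{2p-1}$,'' so normalizing the former to $1$ forces the latter to be a unit. This does not follow. Writing $x_{2p-1}=\lambda_{2p-1}m_{2p-1}+(\text{decomposables in the }m_i\text{'s})$, the decomposable part has rational coefficients with $p$-power denominators (it must, for $x_{2p-1}$ to be integral), and its image under $\eta_R$ contains terms such as $c\,m_p b_{p-1}$ and $c\,m_{p-1}m_1 b_{p-1}$ that are \emph{not} killed by $\sigma^2$ --- these are precisely the corrections you yourself acknowledge ``survive $\sigma^2$ and contribute to the coaction'' when discussing part (d). They shift the coefficient of $\beta^p\sigma^2 t_1$ by unit-scale amounts, so $\delta'\equiv\pm\lambda_{2p-1}\pmod p$ is simply not forced; the mod-$p$ value of $\delta'$ depends on the decomposable coefficients you have not computed. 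Tellingly, the paper does \emph{not} prove (f) by any formal-group-law manipulation: it postpones (f) until after the $v_0$-Bockstein computation and proves it by contradiction, showing that if $\delta'\in p\mathbb Z_p$ then the integral cycle $\alpha = \bigl(D^0((\sigma^2 x_{p-1})^p)-\beta^{p(p-2)}D^0(\sigma^2 x_{2p-1})\bigr)/p$ would represent a $p$-torsion class in $\THH_\ast(\ku)$ that is not $\beta$-power-torsion, contradicting Ausoni's theorem that $\THH(\KU)=\THH(\ku)[\beta^{-1}]$ has no torsion in odd degrees. The need for this topological input strongly suggests no one-line algebraic argument of the kind you propose is available.

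A second, self-acknowledged gap: you defer the \emph{integral} form of (d) --- the exact coefficient $p$ on $\sigma^2 t_k$ and the exact $\beta^{p^k-p^{k-1}}\sigma^2 t_{k-1}$ term --- as ``the delicate heart of the argument,'' but this is most of the content of the lemma, and your logarithm-based framework leaves it unresolved. The paper handles it concretely: for $x_{p-1}$ and $v_2$ it imports Hazewinkel generators through the Quillen idempotent, using the exact formulas $\eta_R(v_1')=v_1'+pt_1'$ and $\eta_R(v_2')=v_2'+pt_2'-(p+1)(v_1')^{p+1}t_1'$ modulo $(t_1')^2$; for $k\geq3$ it inducts by taking $p$-th powers of the formula for $v_{k-1}$, kills the coefficients $c_i$ for $i<p^k-p^{k-1}$ (which are divisible by $p$) by subtracting decomposables $x_1^ix_{p^k-1-i}$, and absorbs the remaining tail into a redefinition of $t_{k-1}$. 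There is also a minor circularity in your derivation of (b): applying $\eta_R$ to $(\sigma^2 v_2)^{p^k}\equiv\delta_k\sigma^2 v_{k+2}$ only identifies $\eta_R\sigma^2 v_{k+2}$ in terms of $(\sigma^2 t_1)^{p^k}$; to conclude $(\sigma^2 t_1)^{p^k}\equiv\delta_k\sigma^2 t_{k+1}$ you need $t_{k+1}$ to have already been pinned down (the paper does this first, via Proposition \ref{prop:hopf} and Lemma \ref{lem:dl}, redefining $t_{k+1}$ to absorb the lower $\sigma^2 b_i$ terms), after which the agreement of the sequences in (a) and (b) is indeed automatic from (d) --- but that ordering matters.
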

	\begin{proof}
        
	We first note that we do not need to require that the sequence $\delta_1,\delta_2,\dots$ of $p$-adic units appearing in (a) and (b) are the same sequences because it follows automatically from the first equation of (d) by taking $p$-th powers mod $p$ and the fact that the statement only depends on the mod $p$ reductions of the $\delta_i$'s.
    
	    By the naturality of $\sigma$, we have $\eta_R(\sigma^2 \alpha) = \sigma^2 \eta_R(\alpha)$ for any $\alpha\in \pi_\ast\operatorname{fib}(\MU\to\ku)$. Therefore, the lemma will be proved using properties of the right unit $\MU_\ast\to\MU_\ast\MU$.

        \textbf{Setup. }Let us outline our strategy. Starting from some set of polynomial generators for $\MU_\ast$ and $\MU_\ast\MU$ as in Definition \ref{def:laz}, we shall try to modify the generators so that they satisfy the lemma.
        
        Let us write $I_0=(x_2,x_3,\dots)\subseteq \MU_\ast$ and $I_1=(x_2,x_3,\dots,b_1,b_2,\dots)\subseteq \MU_\ast\MU$ for the kernel ideals of maps to $\ku_\ast$. Being kernels of maps to $\ku_\ast$ implies that $\eta_L$ and $\eta_R$ map $I_0$ to $I_1$. Also, by Lemma \ref{lem:sus}, $\sigma^2$ annihilates $I_0^2$ and $I_1^2$. In other words, there is a commutative diagram
        \[
            \begin{tikzcd}
                I_0/I_0^2\ar[r,"\overline{\eta_?}"]\ar[d,"\sigma^2"]&I_1/I_1^2\ar[d,"\sigma^2"]\\
                A_\ast\ar[r,"\eta_?"]&A_\ast\otimes\Gamma_\ast
            \end{tikzcd}
        \]
        for $?=L,R$ where $\overline{\eta_?}$ denotes the top arrow.

        The quotients $I_0/I_0^2$ and $I_1/I_1^2$ are naturally modules over $\MU_\ast/I_0=\MU_\ast\MU/I_1=\ku_\ast$. As a $\ku_\ast$-module, $I_0/I_0^2$ is a free graded module generated by $\{\ol{x_2},\ol{x_3},\dots\}$, where $\ol{x_i}$ is the image of $x_i\in I_0$. Similarly, $I_1/I_1^2$ is free and there is a natural splitting
        \[
            I_1/I_1^2=\ol{\eta_L}(I_0/I_0^2) \oplus \ku_\ast\{\ol{b_1},\ol{b_2},\dots\}
        \]
        induced by the multiplication map $\MU_\ast\MU\to\MU$.
        
        We also observe that $\overline{\eta_L}$ and $\ol{\eta_R}$ are $\ku_\ast$-linear. For $\ol{\eta_L}$, this is clear since $\eta_L(x_1)=x_1$. For $\ol{\eta_R}$, this can be seen by either observing that the vertical arrows in the above square are injective or by a direct computation:
        \[
            \eta_R(x_1\alpha)-x_1\eta_R(\alpha)=b_1\eta_R(\alpha)\in I_1^2\qquad\forall\alpha\in I_0.
        \]

        For the purpose of proving this Lemma, we only need to choose a set of generators of $I_0/I_0^2$ and $I_1/I_1^2$ as free $\ku_\ast$-modules. Any set of generators lift to a set of polynomial generators for $\MU_\ast$ and $\MU_\ast\MU$, and the choice of a lift is irrelevant since $\sigma^2$ annihilates any difference.

        \textbf{The reduction argument. }For any $k\geq 2$, we have
        \begin{equation}\label{eq:coeffs}
            \ol{\eta_R}(\ol{x_k}) = \ol{x_k} + \sum_{i=1}^k c_{k,i} \beta^{k-i}\ol{b_i}
        \end{equation}
        for some $c_{k,0},\dots,c_{k,k}\in\mathbb Z_p$. It is a standard fact \cite[Theorem 3.1.5]{green} that the top coefficient $c_{k,k}$ is a $p$-adic unit unless $k+1$ is a power of $p$ and that we have $c_{k,k}\in p\mathbb Z_p^\times$ if $k+1$ is a power of $p$.
        Note that by counitality of the Hopf algebroid $\MU_\ast\MU$, there are no terms of the form $\beta^{k-i}\ol{x_i}$ on the right hand side.

        Let us consider $(c_{k,i})$ as a matrix, indexed by $\{2,3,\dots\}\times\{1,2,\dots\}$, i.e. it is an infinite square matrix with the first row missing. Then, it is ``lower-triangular'' in the sense that $c_{k,i}=0$ for $i>k$, and the ``diagonal'' entries $c_{k,k}$ are units except for those such that $k+1$ is power of $p$.

        Since $\ol{\eta_R}$ is $\ku_\ast$-linear, modifying a generator of $I_0/I_0^2$, by adding multiples of $\beta^{k-j}x_j$ to $x_k$ for example, has an effect of adding the same multiple of the $j$-th row to the $k$-th row. As the diagonal entries are mostly units, we can perform the row operations to make the matrix diagonal except on the columns with a non-unit diagonal entry and the first column. More precisely, suppose we have
        \[
            \ol{\eta_R}(\ol{x_k})=\ol{x_k}+\sum_{i=1}^k c_{k,i}\beta^{k-i}\ol{b_i}.
        \]
        Then, we take the greatest index $i_0\in \{2,\dots,k-1\}\backslash\{p^j-1:j\geq1\}$ such that $c_{k,i_0}\neq 0$, if one exists. Then, if we replace $\ol{x_k}$ by $\ol{x_k}-c_{i_0,i_0}^{-1}c_{k,i_0}\beta^{k-i_0}\ol{x_{i_0}}$, we effectively make $c_{k,i_0}=0$. Repeating this, we arrive at a generator $\ol{x_k}$ such that
        \[
            \ol{\eta_R}(\ol{x_k}) = \ol{x_k} + c_{k,k}\ol{b_k} + \left(\sum_{p^j-1<k}c_{k,p^j-1}\beta^{k-p^j+1}\ol{t_j}\right) + c_{k,1}\beta^{k-1}\ol{b_1}.
        \]
        This \emph{reduction argument} will be used multiple times in the following proof.
	    
	\textbf{Step 1: choosing $x_{p-1},t_1,$ and $b_1$ to satisfy (d). }Let $i_Q:(\BP_\ast,\BP_\ast\BP)\to(\MU_\ast,\MU_\ast\MU)$ be the map of Hopf algebroids induced by the Quillen idempotent and let $v_1',v_2',\dots,t_1',t_2',..$ be the Hazewinkel generators (\cite[A2.2]{green}) for $\BP_\ast=\mathbb Z_p[v_1',\dots]$ and $\BP_\ast\BP=\BP_\ast[t_1',\dots]$. Note that $i_Q(v_1')\in\MU_\ast$ is the coefficient of $X^p$ in the $p$-series of the universal formal group law mod $p$. This means that the image of $i_Q(v_1')$ in $\ku_\ast$ is $\varepsilon\beta^{p-1}$ for some $p$-adic unit $\varepsilon$. We define $x_{p-1}\in\MU_\ast$ to be
	    \[
	    x_{p-1}:=i_Q(v_1') - \varepsilon x_1^{p-1}\in I_0
	    \]
	    and define $t_1\in\MU_\ast\MU$ to be $i_Q(t_1')$. If we choose $b_1$ so that $\eta_R(x_1)=x_1+b_1$ at least for now, then we have
	    \begin{align*}
	        \eta_R(x_{p-1}) &= i_Q(\eta_R v_1) - \varepsilon \eta_R(x_1^{p-1}) \\
	        &= i_Q(v_1'+pt_1') - \varepsilon(x_1+b_1)^{p-1} \\
	        &= x_{p-1}+pt_1 - \varepsilon x_1^{p-2}b_1 \pmod{I_1^2}.
	    \end{align*}
	    Therefore, after scaling $b_1$ by a $p$-adic unit, we have
        \[
            \ol{\eta_R}(\ol{x_{p-1}})=\ol{x_{p-1}}+p\ol{t_1}+\beta^{p-2}\ol{b_1}
        \]
        inducing the second equation of (d) after taking $\sigma^2$. After this scaling, we also have $\eta_R(x_1)=x_1-\varepsilon^{-1}b_1$.
	    
	    \textbf{Step 2: choosing $b_i$'s for $i\neq1,p-1$ to satisfy (b) and (c). }Make arbitrary choices for all $\ol{b_i}$'s that have not already been defined. We shall redefine $\ol{t_k}$'s inductively in $k(\geq 2)$ so that (b) holds. Suppose that we have chosen $\ol{t_{k-1}}$. By Proposition \ref{prop:hopf}, we have
	    \[
	        (\sigma^2 t_{k-1})^p \equiv \alpha_0 \sigma^2 t_k + \sum_{i=1}^{p^k-2} \alpha_i\beta^i \sigma^2 b_{p^k-1-i}\pmod p
	    \]
	    for some constants $\alpha_i$'s. By Lemma \ref{lem:dl}, $\alpha_0$ is a $p$-adic unit. Therefore, we may redefine $\ol{t_k}$ to be
	    \[
	        \alpha_0\ol{t_k}+\sum_{i=1}^{p^k-2}\alpha_i\beta^i \ol{b_{p^k-1-i}}
	    \]
	    then $(\sigma^2 t_{k-1})^p$ would be a unit multiple of $\sigma^2 t_k$ modulo $p$.
	    
	    By the same argument, we can redefine $\ol{b_{2p-1}}$ to satisfy (c).
	    
	    \textbf{Step 3: choosing $v_2$ to satisfy (d). }We shall now choose $\ol{v_2}$. Recall that $v_2'\in\BP_\ast\subseteq\MU_\ast$ is the Hazewinkel generator. Suppose that the image of $v_2'$ in $\ku_\ast$ is $c\beta^{p^2-1}$ for some $c\in\mathbb Z_p$. Let us for now define
        \[
            v_2= i_Q(v_2')-cx_1^{p^2-1}.
        \]
        Using the definition of Hazewinkel generators and formulas in \cite[A.2.1.27]{green}, it can be computed that
        \[
            \eta_R(v_2')=v_2'+pt_2'-(p+1)(v_1')^{p+1}t_1' \pmod{(t_1')^2}
        \]
        in $\BP_\ast\BP$. Therefore,
        \begin{align*}
            &\eta_R(v_2) \\
            &= i_Q(v_2')+pi_Q(t_2')-(p+1)i_Q(v_1')^{p+1}i_Q(t_1') - c(x_1-\varepsilon^{-1} b_1)^{p^2-1} \pmod{I_1^2}\\
            &= v_2 + pi_Q(t_2')-(p+1)\varepsilon^{p+1}x_1^{p^2-p}i_Q(t_1') + (p^2-1)c\varepsilon^{-1} x_1^{p^2-2}b_1 \pmod{I_1^2}
        \end{align*}
        where we have used that $i_Q(v_1')=\varepsilon x_1^{p-1}\pmod{I_1}$ from Step 1. After writing $i_Q(t_1')$ and $i_Q(t_2')$ in terms of our generators $b_i$'s from Steps 1 and 2, we conclude that in the equation \eqref{eq:coeffs}
        \[
            \ol{\eta_R}(\ol{v_2}) = \ol{v_2} + \sum_{i=1}^{p^2-1} c_{p^2-1,i}\beta^{p^2-1-i}\ol{b_i},
        \]
        we have that the coefficients $c_{p^2-1,i}$ are multiples of $p$ for $p-1<i\leq p^2-1$ and is a $p$-adic unit for $i=p-1$.

        Then, using the reduction argument, we can modify $\ol{v_2}$ so that its $\ol{\eta_R}$ is of the form
        \[
            \ol{\eta_R}(\ol{v_2})=\ol{v_2}+c_{p^2-1,p^2-1}\ol{t_2}+c_{p^2-1,p-1}\beta^{p^2-p}\ol{t_1}+\beta^{p^2-2}c_{p^2-1,1}\ol{b_1}
        \]
        for some $c_{p^2-1,p^2-1}\in p\mathbb Z_p^\times$. Since the previous coefficients $c_{p^2-1,i}$ were multiples of $p$ for $p-1<i<p^2-1$, this implies that the coefficient $c_{p^2-1,p-1}$ is still a $p$-adic unit after the reduction argument. Then, by redefining $\ol{v_2}$ to be $\ol{v_2}-c_{p^2-1,1}\beta^{p^2-p}\ol{x_{p-1}}$, we can also make $c_{p^2-1,1}=0$ while retaining that $c_{p^2-1,p-1}\in\mathbb Z_p^\times$. Finally, by scaling $\ol{v_2}$ and $\ol{t_2}$ if necessary, we obtain the desired formula
        \[
            \ol{\eta_R}(\ol{v_2})=\ol{v_2}+p\ol{t_2} + \beta^{p^2-p}\ol{t_1}.
        \]

        \textbf{Step 4: choosing $x_{2p-1}$ to satisfy (e).} The argument is similar to the one in Step 3. Starting from an arbitrary choice of $\ol{x_{2p-1}}$, we can modify it with the reduction argument to obtain
        \[
            \eta_R(\ol{x_{2p-1}}) = \ol{x_{2p-1}}+c_{2p-1,2p-1}\ol{b_{2p-1}} + c_{2p-1,p-1}\beta^p\ol{t_1}+c_{2p-1,1}\beta^{2p-2}\ol{b_1}.
        \]
        By adding a multiple of $\beta^p\ol{x_{p-1}}$ to $\ol{x_{2p-1}}$, we may also assume that $c_{2p-1,1}=0$. Finally, by scaling $x_{2p-1}$, we may assume that $c_{2p-1,2p-1}=1$ and set $\delta':=c_{2p-1,p-1}$.
        
        The difference from the previous step is that we cannot scale $\delta'$ to a fixed number because $b_{2p-1}$ cannot be scaled considering the consistency with (c).
	    
	\textbf{Step 5: choosing $v_k$'s for $k\geq3$.} By the same argument as in Step 2, we can choose $v_3,v_4,\dots$ satisfying (a).
	    
	    Then, we shall modify $v_k$'s inductively for $k\geq 3$ so that (d) is true. Suppose that we have chosen $v_{k-1}$ so that (d) is true. Consider the following two equations
	    \begin{align*}
	        \eta_R \sigma^2 v_{k-1} &= \sigma^2 v_{k-1} + p\sigma^2 t_{k-1} + \beta^{p^{k-1}-p^{k-2}}\sigma^2 t_{k-2}\\
            \eta_R\sigma^2 v_k &=\sigma^2v_k + \sum_{i=1}^{p^k-1}c_{p^k-1,i}\beta^{p^k-1-i}\sigma^2 b_i.
	    \end{align*}
            Taking the $p$-th power mod $p$ of the first and comparing with the second using (a) and (b), we see that $c_{p^k-1,i}$'s are multiples of $p$ for $i\neq p^{k-1}-1$ and that $c_{p^k-1,p^{k-1}-1}$ is a unit.

            Then, as in Step 3, we can perform the reduction argument to obatin
            \[
                \ol{\eta_R}(\ol{v_k})=\ol{v_k} + \sum_{j=1}^k c_{p^k-1,p^j-1} \beta^{p^k-p^j} \ol{t_j} + c_{p^k-1,1}\beta^{p^k-2}\ol{b_1}
            \]
            for some $c_{p^k-1,p^k-1}\in p\mathbb Z_p^\times$. Since we the $c_{p^k-1,i}$'s were previously multiples of $p$ for $i\neq p^{k-1}$, the reduction argument does not change $\ol{v_k}$ mod $p$ so that (a) still holds. Also, it implies that the reduction argument does not change the coefficients $c_{p^k-1,i}$ mod $p$, so that $c_{p^k-1,p^{j}-1}$ is a unit for $j=k-1$ and is a multiple of $p$ if $j<k-1$, and also that $c_{p^k-1,1}$ is a multiple of $p$.

            By adding a multiple of $\beta^{p^k-p}\ol{x_{p-1}}$ to $\ol{v_k}$, we can make $c_{p^k-1,1}=0$. This doesn't change anything mod $p$ since $c_{p^k-1,1}$ was previously a multiple of $p$. Then, inductively in $j\in\{2,\dots,k-1\}$, we can add a multiple of $\beta^{p^k-p^j}\ol{v_j}$ to make $c_{p^k-1,p^{j-1}-1}=0$. In the end, we are left with
            \[
                \ol{\eta_R}(\ol{v_k}) = \ol{v_k} + c_{p^k-1,p^k-1}\ol{t_k}+c_{p^k-1,p^{k-1}-1}\beta^{p^k-p^{k-1}}\ol{t_{k-1}}
            \]
            where $c_{p^k-1,p^k-1}\in p\mathbb Z_p^\times$ and $c_{p^k-1,p^{k-1}-1}\in\mathbb Z_p^\times$. After scaling $\ol{v_k}$ and $\ol{t_k}$, we obtain
            \[
                \ol{\eta_R}(\ol{v_k})=\ol{v_k}+p\ol{t_k}+\beta^{p^k-p^{k-1}}\ol{t_{k-1}}.
            \]

	    \textbf{Step 6: proof of (f). }By Proposition \ref{prop:hopf} and Lemma \ref{lem:dl}, we have
	    \[
	        (\sigma^2 x_{p-1})^p \equiv \alpha_0 \sigma^2 v_2 + \sum_{i=1}^{p^2-2} \alpha_i \beta^i \sigma^2 x_{p^2-1-i} \pmod p
	    \]
	    for some constants $\alpha_0,\dots,\alpha_{p^2-2}$ with $\alpha_0$ a unit. We shall compare both sides after applying $\eta_R$. Since
	    \[
	        \eta_R((\sigma^2 x_{p-1})^p) \equiv (\sigma^2 x_{p-1})^p + \beta^{p(p-2)}\sigma^2 b_{2p-1}\pmod p
	    \]
	    we have $\alpha_i\equiv0\pmod p$ for $i\neq 0,p^2-2p,p^2-p$ since it would otherwise introduce a nonzero term $\beta^i \sigma^2 b_{p^2-1-i}$ when $\eta_R$ is applied. If $\delta'\equiv0\pmod p$, then on the right hand side, we would not be able to cancel out $\beta^{p^2-p}\sigma^2 t_1$ appearing in $\eta_R(\alpha_0 \sigma^2 v_2)$.
	\end{proof}
	
	\begin{remark}
	    Using that $\sigma^2 b_i$ can be identified with the image of Bott map in \cite[Prop. II.12.6]{blue}, it could be possible to determine the sequence $(\delta_i)$ or even the generators.
	\end{remark}
	
	\section{Bockstein Spectral Sequences}
	
	As before, until the end of this section, $p$ is a fixed odd prime and every spectrum is $p$-complete. We also fix a set of generators $x_1,x_2,\dots,b_1,b_2,\dots$ of $\MU_\ast$ and $\MU_\ast\MU$ satisfying Lemma \ref{lem:coaction}.
	
	Recall that $\CB_{\Gamma_\ast}(A_\ast)$ is a cochain complex of free $\ku_\ast$-modules. Then, filtering this cobar complex by powers of $\beta$, we obtain the \emph{$\beta$-Bockstein spectral sequence}
	\begin{equation}\label{eq:betabss}
		E_1 = \Ext_{\Gamma_\ast}(A_\ast/\beta)[\beta] \Rightarrow \Ext_{\Gamma_\ast}(A_\ast),
	\end{equation}
	and filtering $\CB_{\Gamma_\ast}(A_\ast/\beta)$ by powers of $p$, we obtain the \emph{$v_0$-Bockstein spectral sequence}
	\begin{equation}\label{eq:v0bss}
	E_1 = \Ext_{\Gamma_\ast}(A_\ast/(p,\beta))[v_0] \Rightarrow \Ext_{\Gamma_\ast}(A_\ast/\beta).
	\end{equation}
	The convergence of the $v_0$-Bockstein spectral sequence follows from the $p$-completeness and the convergence of the $\beta$-Bockstein spectral sequence follows from the fact that $A_\ast\otimes\Gamma_\ast^{\otimes s}$ is bounded below for any $s$. We shall compute these spectral sequences in this section.
	
	Note that a $v_0$-Bockstein differential $d_r(x)=v_0^ry$ is equivalent to finding a class $\widetilde x\in\CB_{\Gamma_\ast}^i(A_\ast/\beta)$ such that $\widetilde x$ represents $x$ modulo $p$ and $D^i(\widetilde x)=p^r\widetilde y$ for some $\widetilde y\in \CB_{\Gamma_\ast}^{i+1}(A_\ast/\beta)$ representing $y$ modulo $p$. There is a similar description for $\beta$-Bockstein differentials.
	
	\subsection{\texorpdfstring{$v_0$}{v0}-Bockstein}\label{ssec:v0b}
	\begin{proposition}\label{prop:mainp}
		We have
		\[
			\Ext_{\Gamma_\ast}(A_\ast/(p,\beta)) = \mathbb F_p[\mu]\otimes_{\mathbb F_p} \Lambda_{\mathbb F_p}(\lambda_1,\lambda_2)
		\]
		as $\mathbb F_p$-algebras, where $\mu,\lambda_1,\lambda_2$ are represented by $\sigma^2 x_{p-1}, \sigma^2 t_1,\sigma^2 b_1$, respectively, in $\CB_{\Gamma_\ast}(A_\ast/(p,\beta))$.
	\end{proposition}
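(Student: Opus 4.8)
The plan is to compute $\Ext_{\Gamma_\ast}(A_\ast/(p,\beta))$ as the cohomology of the cobar complex $\CB_{\Gamma_\ast}(A_\ast/(p,\beta))$, after first reducing it to an honest $\mathbb F_p$-linear problem. Since $A_\ast/(p,\beta)$ is a module over $\ku_\ast/(p,\beta)=\mathbb F_p$ and all tensor products are over $\MU_\ast$, I would set $\overline\Gamma:=\Gamma_\ast\otimes_{\MU_\ast}\mathbb F_p$ and $\overline A:=A_\ast/(p,\beta)$, and identify $\CB_{\Gamma_\ast}(A_\ast/(p,\beta))=\CB_{\overline\Gamma}(\overline A)$: the cobar complex of the $\mathbb F_p$-Hopf algebra $\overline\Gamma$ with coefficients in the comodule algebra $\overline A$. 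By Proposition \ref{prop:hopf} and Example \ref{ex:ha}, $\overline\Gamma$ is generated by the primitives $\sigma^2 b_i$ $(i\geq 1)$ and $\overline A$ by $\sigma^2 x_i$ $(i\geq 2)$, subject to the $p$-th power relations $(\sigma^2 b_i)^p=\sigma^2 b_{pi+p-1}$ and $(\sigma^2 v_k)^p=\sigma^2 v_{k+1}$ of Lemma \ref{lem:dl}.

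Next I would reduce the coaction formulas of Lemma \ref{lem:coaction} modulo $(p,\beta)$. Every term carrying a positive power of $\beta$ dies, so $\eta_R(\sigma^2 v_k)=\sigma^2 v_k\otimes 1$ for all $k\geq 1$; that is, each $\sigma^2 v_k=\sigma^2 x_{p^k-1}$ (in particular $\mu=\sigma^2 x_{p-1}$) becomes a comodule primitive. For the remaining generators only the leading term of the right unit survives, giving $\eta_R(\sigma^2 x_i)=\sigma^2 x_i\otimes 1+1\otimes\sigma^2 b_i$. Thus each such $\sigma^2 x_i$ ``caps'' the primitive $\sigma^2 b_i$, and the only primitives of $\overline\Gamma$ left uncapped are $\sigma^2 b_1$ (there is no $\sigma^2 x_1$) and $\sigma^2 t_1=\sigma^2 b_{p-1}$ (its partner $\sigma^2 x_{p-1}$ coacts trivially). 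These are exactly the classes $\lambda_2,\lambda_1$ of the statement, and $\mu$ is the uncapped primitive of $\overline A$.

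The main step is a change-of-rings computation. Let $\overline\Phi\subseteq\overline\Gamma$ be the sub-Hopf-algebra generated by the capped classes $\{\sigma^2 b_i:i\geq 2,\ i\neq p^k-1\}$. Using $\eta_R(\sigma^2 x_i)=\sigma^2 x_i\otimes 1+1\otimes\sigma^2 b_i$ I would show that $\overline A$ is an extended (injective) $\overline\Phi$-comodule, so that $\Ext_{\overline\Phi}(\overline A)$ is concentrated in cohomological degree $0$ and equals the $\overline\Phi$-coinvariants. Granting that the coinvariants are $\mathbb F_p[\mu]$ with trivial coaction over the quotient Hopf algebra $\overline\Gamma/\!/\overline\Phi$, the computation reduces to $\mathbb F_p[\mu]\otimes\Ext_{\overline\Gamma/\!/\overline\Phi}(\mathbb F_p)$. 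The quotient should be the divided power Hopf algebra on the two surviving primitives $\sigma^2 b_1,\sigma^2 t_1$; since the $\Ext$ of a divided power Hopf algebra on a single even primitive is an exterior algebra on one class, this yields $\Lambda_{\mathbb F_p}(\lambda_1,\lambda_2)$ and hence the claimed answer, after checking that $\mu,\lambda_1,\lambda_2$ are represented by $\sigma^2 x_{p-1},\sigma^2 t_1,\sigma^2 b_1$.

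The crux is the interaction between the $p$-th power towers and the divided power structure, where the two exterior classes behave asymmetrically. For $\sigma^2 b_1$ everything is clean: all of its $p$-th powers $\sigma^2 b_{2p^k-1}$ are capped by $\sigma^2 x_{2p^k-1}$, so in the quotient $(\sigma^2 b_1)^p=0$ while the divided powers $\gamma_{p^k}(\sigma^2 b_1)$ survive, producing an honest divided power algebra and the exterior class $\lambda_2$. The class $\sigma^2 t_1$ is the hard case: its $p$-th powers $\sigma^2 t_k$ are \emph{not} capped, because their would-be partners $\sigma^2 v_k$ coact trivially. To prevent spurious polynomial classes in $\Ext^{\geq 2}$ I would need to control the coaction on the non-primitive part of $\overline A$---information beyond Lemma \ref{lem:coaction}, which only records $\sigma^2$ of the ring generators---and show that the residual coaction forces $\sigma^2 t_k\equiv\mu^{p^{k-1}-1}\lambda_1$ in cohomology, so that the $\sigma^2 t$-tower is absorbed into $\mathbb F_p[\mu]$-multiples of $\lambda_1$ rather than contributing new generators. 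I expect the bulk of the work, and the essential use of the restricted $p$-th power structure of Proposition \ref{prop:hopf}(c) together with Lemmas \ref{lem:dl} and \ref{lem:coaction}, to be concentrated exactly here.
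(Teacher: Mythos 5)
Your reduction is fine as far as it goes: modulo $(p,\beta)$ the cobar complex is indeed that of the $\mathbb F_p$-Hopf algebra $\overline\Gamma:=\Gamma_\ast\otimes_{\MU_\ast}\mathbb F_p$ acting on $\overline A:=A_\ast/(p,\beta)$, the classes $\sigma^2 v_k$ become comodule invariants, and the capped generators coact by $\sigma^2 x_i\mapsto\sigma^2 x_i\otimes1+1\otimes(\mathrm{unit})\,\sigma^2 b_i$. But the proof breaks at the change-of-rings step, and the failure is sharper than the gap you flag. First, $\overline A$ is not generated as an algebra by the $\sigma^2 x_i$: by Proposition \ref{prop:hopf} it is a filtered deformation of a divided power algebra, so multiplicativity does not propagate your generator formulas to all of $\overline A$; in particular the $\overline\Phi$-comodule structure you invoke is not even defined (comodules restrict along quotient Hopf algebras, not along sub-Hopf algebras, so you would first have to prove the coaction factors through $\overline A\otimes\overline\Phi$, which is exactly the unknown). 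Second, the identification of $\overline\Gamma/\!/\overline\Phi$ with $\Gamma_{\mathbb F_p}[\sigma^2 b_1]\otimes\Gamma_{\mathbb F_p}[\sigma^2 t_1]$ is false: an ideal generated by ring elements never contains their divided powers, so the same observation that keeps $\gamma_{p^k}(\sigma^2 b_1)$ alive also keeps the classes $\gamma_{p^k}(\sigma^2 b_i)$ for \emph{capped} $i$ alive in the quotient. Quantitatively, $\overline\Gamma$ is free over the sub-Hopf algebra $\overline\Phi$ and $\overline\Phi$ is at most a polynomial algebra on its tower generators, so the Poincar\'e series of $\overline\Gamma/\!/\overline\Phi$ strictly exceeds that of the divided power algebra on two classes (there is excess by degree $6p$ at the latest); since for a connected graded Hopf algebra $\Sigma$ the alternating sum of the Poincar\'e series of $\Ext^s_\Sigma(\mathbb F_p,\mathbb F_p)$ equals the reciprocal of the series of $\Sigma$, it follows that $\Ext_{\overline\Gamma/\!/\overline\Phi}(\mathbb F_p,\mathbb F_p)\neq\Lambda(\lambda_1,\lambda_2)$. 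Thus, even granting your extended-comodule claim, the reduction terminates at an answer strictly larger than the proposition's: $\Ext^1$ acquires extra classes such as the primitive images of $\sigma^2 t_k$ ($k\geq2$) and of $\gamma_p(\sigma^2 b_i)$. The repair you propose, forcing $\sigma^2 t_k\equiv\mu^{p^{k-1}-1}\lambda_1$, is a relation in $\Ext_{\overline\Gamma}(\overline A)$ that is invisible after your reduction, because over $\overline\Gamma/\!/\overline\Phi$ with trivial coefficients there is no $\mu$ left to effect it; such identifications are precisely the failure of $\overline A$ to be extended over $\overline\Phi$. So the missing step is not a fillable hole inside your framework; as organized, the framework forces a wrong intermediate statement.

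For comparison, the paper's proof never touches the comodule structure of $\overline A$ beyond the generators: it mimics the proof of \cite[Prop. 6.1.6]{HW}, taking as input Ausoni's computation $\THH_\ast(\ku;\mathbb F_p)\simeq\mathbb F_p[\sigma^2 x_{p-1}]\otimes\Lambda_{\mathbb F_p}(\sigma t_1,\sigma b_1)$, which is the abutment of the (strongly convergent) descent spectral sequence whose $E_2$-page is the object being computed. One exhibits $\mu,\lambda_1,\lambda_2$ as classes on the $E_2$-page detecting the generators of the abutment and then pins down the whole $E_2$-page against the known size of the abutment. A genuinely algebraic proof along your lines is conceivable, but the statement it would require is that $\overline A$ is, away from the trivial factor $\mathbb F_p[\mu]$, a cofree comodule induced along an appropriate \emph{quotient} Hopf algebra of $\overline\Gamma$ whose $\Ext$ is $\Lambda(\lambda_1,\lambda_2)$; establishing that requires control of the coaction on the divided-power part of $\overline A$, which is exactly the information that neither Proposition \ref{prop:hopf}, Lemma \ref{lem:dl}, nor Lemma \ref{lem:coaction} supplies, and which the appeal to Ausoni's theorem is designed to avoid.
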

	\begin{proof}
	    We can mimic the proof of \cite[Prop. 6.1.6]{HW}. The necessary ingredient is \cite[Thm. 6.8]{A-THHku} which states that
	    \[
	        \THH_\ast(\ku;\mathbb F_p) \simeq \mathbb F_p[\sigma^2 x_{p-1}] \otimes \Lambda_{\mathbb F_p}(\sigma t_1,\sigma b_1).
	    \]
	\end{proof}

	\begin{corollary}\label{cor:deg}
	    The descent spectral sequence for 
	    \[
	    \THH(\ku)\to\THH(\ku/\MU)
	    \]
	    degenerates at the $E_2$-page.
	\end{corollary}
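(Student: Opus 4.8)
The plan is to combine two features of the $E_2$-page $\Ext_{\Gamma_\ast}(A_\ast)$ of the descent spectral sequence: a \emph{parity} constraint, coming from the fact that $A_\ast$ and $\Gamma_\ast$ are concentrated in even degrees, and a \emph{horizontal vanishing} constraint, namely that $\Ext_{\Gamma_\ast}(A_\ast)$ is concentrated in cohomological (cobar) degrees $s\in\{0,1,2\}$. Together these force every differential out of $E_2$ to vanish.

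First I would establish the horizontal vanishing. By Proposition \ref{prop:mainp}, $\Ext_{\Gamma_\ast}(A_\ast/(p,\beta)) = \mathbb F_p[\mu]\otimes_{\mathbb F_p}\Lambda_{\mathbb F_p}(\lambda_1,\lambda_2)$, where $\mu=\sigma^2 x_{p-1}$ is represented in cobar degree $0$ (it lies in $A_\ast$) while $\lambda_1=\sigma^2 t_1$ and $\lambda_2=\sigma^2 b_1$ are represented in cobar degree $1$ (they lie in $\Gamma_\ast$). Hence every monomial $\mu^a\lambda_1^{\epsilon_1}\lambda_2^{\epsilon_2}$ has cobar degree $\epsilon_1+\epsilon_2\in\{0,1,2\}$, so this algebra is concentrated in $s\le 2$. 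The differentials of both Bockstein spectral sequences \eqref{eq:v0bss} and \eqref{eq:betabss} are induced by the cobar differential and therefore raise $s$ by exactly one; in particular they cannot leave the range $s\le 2$. Running $\Ext_{\Gamma_\ast}(A_\ast/(p,\beta))$ through the $v_0$-Bockstein then shows that $\Ext_{\Gamma_\ast}(A_\ast/\beta)$ is concentrated in $s\le 2$, and running this through the $\beta$-Bockstein shows the same for $E_2=\Ext_{\Gamma_\ast}(A_\ast)$.

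Next I would record the parity statement: since $A_\ast$ and $\Gamma_\ast$ vanish in odd degrees, so does each cobar term $A_\ast\otimes\Gamma_\ast^{\otimes s}$, and hence $E_1^{s,t}=0$—and so $E_r^{s,t}=0$ for all $r$—whenever $t$ is odd. With both inputs in hand the degeneration is immediate. The descent differential has bidegree $d_r\colon E_r^{s,t}\to E_r^{s+r,t+r-1}$. When $r$ is even, $d_r$ reverses the parity of $t$, so it vanishes by the parity statement; when $r$ is odd we have $r\ge 3$, so $d_r$ lands in cohomological degree $s+r\ge 3$, where $E_r$ vanishes by horizontal vanishing. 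Since every $r\ge 2$ falls into one of these two cases, all differentials vanish and the spectral sequence degenerates at $E_2$.

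The only real subtlety—and thus the step I would treat most carefully—is the claim that the Bockstein differentials raise cobar degree by one, which is exactly what lets the thin band $s\le 2$ propagate from $\Ext_{\Gamma_\ast}(A_\ast/(p,\beta))$ all the way up to $E_2$. This follows from the description of a Bockstein differential as lifting a representing cocycle, dividing by the relevant power of $p$ or $\beta$, and applying the cobar differential $D^i$, recalled just before Section \ref{ssec:v0b}; since $D^i$ raises $s$ by one and the division is internal to a fixed cobar degree, the band is preserved. Note that this argument also renders the potentially dangerous differential $d_2\colon E_2^{0,t}\to E_2^{2,t+1}$ harmless on purely parity grounds, so no separate analysis of permanent cycles in filtration $0$ is needed.
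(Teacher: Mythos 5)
Your proof is correct and follows essentially the same route as the paper: concentration of $\Ext_{\Gamma_\ast}(A_\ast)$ in cobar degrees $0\leq s\leq 2$ (deduced from Proposition \ref{prop:mainp} via convergence of the two Bockstein spectral sequences) together with vanishing in odd internal degrees $t$, which leaves no room for any $d_r$ with $r\geq 2$. The paper compresses the final step into ``there is no room for differentials,'' whereas you spell out the even/odd-$r$ case analysis explicitly; this is a matter of exposition, not substance.
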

	\begin{proof}
	    Since $\Ext^{s,t}_{\Gamma_\ast}(A_\ast/(p,\beta))$ is concentrated in $0\leq s\leq2$, so is $\Ext^{s,t}_{\Gamma_\ast}(A_\ast)$. This follows from the convergence of Bockstein spectral sequences and will become clearer as we compute Bockstein differentials. Furthermore, the Ext groups are nonzero only if $t\in 2\mathbb Z$ because this is already true for cobar complexes. Therefore, there is no room for differentials and the descent spectral sequence degenerates.
	\end{proof}
	
	\begin{lemma}\phantomsection\label{lem:luc}
	    \begin{enumerate}
	    \item For any nonnegative integers $m,k$, we have
	    \[
	        \val_p\binom mk\geq \val_p(m)-\val_p(k),
	    \]
	    and if $k$ is a power of $p$ and $m$ is a multiple of $k$, then it is an equality.
	    \item Recall that $p>2$. Let $R$ be any commutative ring and $x,y\in R$ be any elements. Then
	    \[
	        (x+p^e y)^m \equiv x^m + mp^e x^{m-1}y \pmod{p^{e+\val_p(m)+1}} 
	    \]
	    for any positive integers $m,e$. 
	    \item If $m$ is a multiple of $p^k$ for some $k\geq 0$, then we have
	    \[
	        \binom{m}{p^k} \equiv \frac{m}{p^k} \pmod p.
	    \]
	    \end{enumerate}
	\end{lemma}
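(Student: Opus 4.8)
These are three elementary number-theoretic facts, and my plan is to dispatch each separately, using (a) to feed (b) and (c). For part (a), I would argue via the $p$-adic valuation of factorials through Legendre's formula, writing $\val_p(m!)=\sum_{i\geq1}\lfloor m/p^i\rfloor$, so that $\val_p\binom mk=\val_p(m!)-\val_p(k!)-\val_p((m-k)!)$. The cleanest route, however, is to use Kummer's theorem: $\val_p\binom mk$ equals the number of carries when adding $k$ and $m-k$ in base $p$. The bound $\val_p\binom mk\geq\val_p(m)-\val_p(k)$ I would prove by the standard identity $k\binom mk=m\binom{m-1}{k-1}$, which gives $\val_p(k)+\val_p\binom mk=\val_p(m)+\val_p\binom{m-1}{k-1}\geq\val_p(m)$, yielding the inequality immediately. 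For the equality when $k=p^j$ is a prime power, I would observe that $\val_p\binom{m-1}{p^j-1}=0$: indeed $\binom{m-1}{p^j-1}$ is a $p$-adic unit because, writing $m-1$ and $p^j-1$ in base $p$ and noting that $p^j-1$ has digits all equal to $p-1$ in positions $0,\dots,j-1$, Lucas' theorem forces each relevant binomial coefficient of digits to be nonzero mod $p$ (equivalently, there are no carries in adding $(p^j-1)$ and $(m-p^j)$ when $p^j\mid m$).

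For part (b), I would expand $(x+p^e y)^m$ by the binomial theorem and estimate the valuation of each term $\binom mk x^{m-k}(p^e y)^k$ for $k\geq2$. The point is that $\val_p\bigl(\binom mk p^{ek}\bigr)\geq \val_p\binom mk + ek \geq (\val_p(m)-\val_p(k))+ek$ by part (a). So it suffices to check that $(\val_p(m)-\val_p(k))+ek\geq e+\val_p(m)+1$ for all $k\geq2$, i.e. that $e(k-1)\geq\val_p(k)+1$. Since $k\geq2$ gives $e(k-1)\geq k-1$ and $\val_p(k)\leq\log_p k<k-1$ for $k\geq2$ (with the borderline case $k=p$, where $\val_p(k)=1$ and $e(k-1)=e(p-1)\geq p-1\geq 2=\val_p(k)+1$ handled directly), the inequality holds. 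Hence every term with $k\geq2$ vanishes modulo $p^{e+\val_p(m)+1}$, leaving $x^m+mp^e x^{m-1}y$. I expect this term-by-term valuation bookkeeping—particularly verifying the inequality $e(k-1)\geq\val_p(k)+1$ uniformly—to be the one spot requiring a little care, so that is where I would focus attention.

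For part (c), with $m=p^k n$ I would use the factorization $\binom{m}{p^k}=\frac{m}{p^k}\cdot\frac{(m-1)!}{(p^k-1)!\,(m-p^k)!}=\frac m{p^k}\binom{m-1}{p^k-1}$ via the same identity $p^k\binom m{p^k}=m\binom{m-1}{p^k-1}$ used above. By the prime-power equality case of part (a), $\binom{m-1}{p^k-1}$ is a $p$-adic unit, and in fact I would show $\binom{m-1}{p^k-1}\equiv1\pmod p$ directly from Lucas' theorem: since $p^k\mid m$, the base-$p$ digits of $m-1$ in positions $0,\dots,k-1$ are all $p-1$, matching those of $p^k-1$, so each digitwise binomial coefficient is $\binom{p-1}{p-1}=1$. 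Therefore $\binom{m}{p^k}=\frac m{p^k}\binom{m-1}{p^k-1}\equiv\frac m{p^k}\pmod p$, as claimed.
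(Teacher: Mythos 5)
Your proof is correct for odd $p$ (the standing assumption in the section where this lemma lives) and is considerably more explicit than the paper's, which disposes of (a) by citing Kummer's theorem ($\val_p\binom mk$ equals the number of carries when adding $k$ and $m-k$ in base $p$), declares that (b) easily follows from (a), and notes that (c) is a special case of Lucas's theorem. Your route is genuinely different in (a) and (c): the absorption identity $k\binom mk=m\binom{m-1}{k-1}$ gives the inequality in one line, and it reduces both the equality case of (a) and the congruence (c) to the single fact $\binom{m-1}{p^j-1}\equiv1\pmod p$ when $p^j\mid m$, which you get from Lucas. The Kummer argument buys uniformity (the equality case of (a) and part (c) are both immediate carry counts); yours buys self-containedness and, more importantly, actually carries out the valuation bookkeeping in (b), which is the one part the paper waves off and the one place where a hypothesis on $p$ is genuinely needed.

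Two caveats, neither fatal. First, your verification of $e(k-1)\geq\val_p(k)+1$ silently uses $p\geq3$: the strict inequality $\log_p k<k-1$ fails at $p=2$, $k=2$, and your borderline estimate $e(p-1)\geq p-1\geq 2$ also fails at $p=2$; indeed statement (b) itself is false at the prime $2$ (take $m=2$, $e=1$: then $(x+2y)^2=x^2+4xy+4y^2\not\equiv x^2+4xy\pmod 8$ in general). Since the section fixes $p$ odd, your argument is fine, but you should state explicitly that oddness of $p$ is being used. Second, your Lucas argument for the equality case of (a) assumes $p^j\mid m$ (your parenthetical); this assumption is genuinely needed, since otherwise the digits of $m-1$ in positions $0,\dots,j-1$ need not all equal $p-1$ and equality can fail, e.g. $\val_3\binom{4}{3}=0>\val_3(4)-\val_3(3)=-1$. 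So the unqualified equality claim in the lemma as stated is, read literally, false; your restricted proof covers exactly the regime in which the paper applies it (every application has $\val_p(m)\geq\val_p(k)$), which is the right fix, but the discrepancy with the stated lemma deserves a sentence.
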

	\begin{proof}
	    (a) follows from the fact that $\val_p\binom mk$ is equal to the number of carries in the $p$-adic addition of $k$ and $m-k$. (b) follows from (a) since the $k$'th term in the binomial expansion has $p$-adic valuation
        \[
            \val_p\left[\binom mk p^{ke}\right] \geq \val_p(m)-\val_p(k) + ke\geq \val_p(m)+e-\log_p k+(k-1)e\geq\val_p(m)+e+1
        \]
        for $k\geq 3$. The last inequality holds since
        \[
            -\log_pk+(k-1)e\geq-\log_pk+(k-1)\geq 1
        \]
        is implied by the inequalities $p\geq3$ and $3^{k-2}\geq k$. If $k=2$, then $\val_p(k)=0$, so that 
        \[
            \val_p\left[\binom mk p^{ke}\right] \geq \val_p(m)+ke\geq\val_p(m)+e+1.
        \]
        Note that we are using our assumption that $p$ is odd.
        (c) is a special case of Lucas's theorem.
	\end{proof}

        \begin{remark}
            Lemma \ref{lem:luc} (b) is false if $p=2$, $e=1$, and $m\equiv2\pmod4$, which is not a problem for us since we are assuming that $p$ is odd. One needs to be slightly more careful if one wants to replicate the rest of the argument in the paper at $p=2$.
        \end{remark}
	
	\begin{proposition}
		We have
		\[
			\Ext_{\Gamma_\ast}(A_\ast/\beta) \simeq \left[\mathbb Z_p\{1\}\oplus\bigoplus_{k=1}^\infty \mathbb Z/p^{\val_p(k)+1}\{a_k\}\right]\otimes_{\mathbb Z_p}\Lambda_{\mathbb Z_p}(\lambda_2)
		\]
		as $\mathbb Z_p$-modules. The generators are represented in the $\CB_{\Gamma_\ast}(A_\ast/\beta)$ as the following:
		\begin{itemize}
			\item $\mathbb Z_p$ is generated by $1$,
			\item The generator $a_k$ of $\mathbb Z/p^{\val_p(k)+1}$ is represented by the bidegree $(1,2pk)$ cycle (also denoted $a_k$ by abuse of notation)
			\[
				a_k:=\frac{D^0((\sigma^2 x_{p-1})^k)}{p^{\val_p(k)+1}} = \frac{(\sigma^2 x_{p-1}+p\sigma^2 b_{p-1})^k-(\sigma^2 x_{p-1})^k}{p^{\val_p(k)+1}},
			\]
			\item $\lambda_2$ is represented by $\sigma^2 b_1$.
		\end{itemize}
		Multiplication by $\lambda_2$ should be interpreted as in Remark \ref{rmk:mul}.
	\end{proposition}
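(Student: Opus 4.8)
The plan is to run the $v_0$-Bockstein spectral sequence \eqref{eq:v0bss}, whose $E_1$-page is
\[
\Ext_{\Gamma_\ast}(A_\ast/(p,\beta))[v_0] = \mathbb F_p[\mu,v_0]\otimes_{\mathbb F_p}\Lambda_{\mathbb F_p}(\lambda_1,\lambda_2)
\]
by Proposition \ref{prop:mainp}. Since $\lambda_2=\sigma^2 b_1$ is a permanent cycle (Remark \ref{rmk:desus}) and multiplication by it is given by Remark \ref{rmk:mul}, the whole spectral sequence is a module over $\Lambda(\lambda_2)$ and every differential is $\lambda_2$-linear. It therefore suffices to compute the reduced spectral sequence on $\mathbb F_p[\mu,v_0]\otimes\Lambda(\lambda_1)$ and tensor the outcome with $\Lambda_{\mathbb Z_p}(\lambda_2)$ at the very end.

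The computational input is the coaction formula. Recalling $t_1=b_{p-1}$ and reducing Lemma \ref{lem:coaction}(d) modulo $\beta$---the term $\beta^{p-2}\sigma^2 b_1$ vanishing because $p$ is odd---we obtain $\eta_R(\sigma^2 x_{p-1})=\sigma^2 x_{p-1}+p\,\sigma^2 t_1$ in $A_\ast/\beta$. As $\eta_R$ is a ring map, in $\CB^1_{\Gamma_\ast}(A_\ast/\beta)$ we have
\[
D^0(\mu^k)=(\mu+p\,\sigma^2 t_1)^k-\mu^k=\sum_{j=1}^{k}\binom{k}{j}p^j\mu^{k-j}(\sigma^2 t_1)^j.
\]
A valuation count via Lemma \ref{lem:luc}(a) gives $\val_p\bigl(\binom{k}{j}p^j\bigr)\geq\val_p(k)+\bigl(j-\val_p(j)\bigr)\geq\val_p(k)+1$ for every $j\geq1$, with equality exactly at $j=1$, where the coefficient $kp$ has valuation $\val_p(k)+1$ and unit part $k/p^{\val_p(k)}$. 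Consequently $D^0(\mu^k)$ is divisible by $p^{\val_p(k)+1}$, so $a_k:=D^0(\mu^k)/p^{\val_p(k)+1}$ is an integral cocycle with $a_k\equiv (k/p^{\val_p(k)})\,\mu^{k-1}\lambda_1\pmod p$, a unit multiple of $\mu^{k-1}\lambda_1$ on the $E_1$-page.

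This reads off all differentials. For $k\geq1$ with $r=\val_p(k)$, the fact that $D^0(\mu^k)$ has valuation exactly $r+1$ forces $d_1,\dots,d_r$ to vanish on $v_0^i\mu^k$ and yields $d_{r+1}(v_0^i\mu^k)=v_0^{i+r+1}a_k$; meanwhile, since the reduced spectral sequence is concentrated in cohomological degrees $0$ and $1$, the degree-one classes $v_0^i\mu^k\lambda_1$ are permanent cycles. A direct bookkeeping now determines $E_\infty$. In degree $s=0$ only the tower $\mathbb F_p[v_0]\cdot1$ survives, since every $\mu^k$ with $k\geq1$ supports a differential; this gives the free summand $\mathbb Z_p\{1\}$. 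In degree $s=1$, the class $v_0^j\mu^{k-1}\lambda_1$ is hit precisely when $j\geq\val_p(k)+1$ (by $d_{\val_p(k)+1}$ off $\mu^k$), so for each $k\geq1$ exactly the classes $v_0^j\mu^{k-1}\lambda_1$ with $0\leq j\leq\val_p(k)$ survive. These are the associated graded of a cyclic group on $a_k$, and the explicit relation $p^{\val_p(k)+1}a_k=D^0(\mu^k)=0$ in $\Ext$, together with the $v_0$-tower (that is, $p$-multiplication) structure, identifies it as $\mathbb Z/p^{\val_p(k)+1}\{a_k\}$. Tensoring back with $\Lambda_{\mathbb Z_p}(\lambda_2)$ gives the stated module.

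I expect the last paragraph to be the main obstacle: one must check that the valuation estimate really pins the first nonzero differential on $\mu^k$ to be exactly $d_{\val_p(k)+1}$ (neither an earlier accidental differential nor a longer one) and that the surviving $v_0$-towers assemble into genuine cyclic groups rather than split extensions. This is where the explicit integral cocycles $a_k$ and the relation $p^{\val_p(k)+1}a_k=0$ in $\Ext$ are indispensable, and where the $\Lambda(\lambda_2)$-linearity coming from Remarks \ref{rmk:desus} and \ref{rmk:mul} must be invoked to reduce cleanly to the two-column spectral sequence.
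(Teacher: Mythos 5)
Your proposal is correct and takes essentially the same route as the paper: both run the $v_0$-Bockstein spectral sequence \eqref{eq:v0bss}, deduce the differentials $d_{\val_p(k)+1}(\mu^k)=v_0^{\val_p(k)+1}\mu^{k-1}\lambda_1$ (up to a unit) from the integral computation $D^0((\sigma^2 x_{p-1})^k)\equiv kp(\sigma^2 x_{p-1})^{k-1}\sigma^2 t_1 \pmod{p^{\val_p(k)+2}}$, and propagate by $\lambda_2$-permanence (Remarks \ref{rmk:desus}, \ref{rmk:mul}). Your final bookkeeping of $E_\infty$ and the resolution of extensions via the explicit cocycles $a_k$ with $p^{\val_p(k)+1}a_k=D^0(\mu^k)$ is exactly the content the paper compresses into ``the generators in the statement can be derived from this computation.''
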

	\begin{proof}
	    We work in $\CB_{\Gamma_\ast}(A_\ast/\beta)$. Then, by Lemma \ref{lem:luc}(b), we have
	    \begin{align*}
	        D^0((\sigma^2 x_{p-1})^k) &= (\sigma^2 x_{p-1}+p\sigma^2 t_1)^k - (\sigma^2 x_{p-1})^k \\
	        &\equiv kp(\sigma^2 x_{p-1})^{k-1}\sigma^2 t_1 \pmod{p^{\val_p(k)+2}},
	    \end{align*}
	    which gives us the differentials
	    \[
	        d_{\val_p(k)+1}(\mu^k) = v_0^{\val_p(k)+1}\mu^{k-1}\lambda_1
	    \]
	    up to a $p$-adic unit in the $v_0$-Bockstein spectral sequence \eqref{eq:v0bss}. Since $\lambda_2$ is a permanent cycle represented by $\sigma^2 b_1$, we have the differentials
	    \[
	        d_{\val_p(k)+1}(\mu^k \lambda_2) = v_0^{\val_p(k)+1}\mu^{k-1}\lambda_1\lambda_2.
	    \]
	    The generators in the statement can be derived from this computation.
	\end{proof}
	
	\begin{corollary}
	    We have
	    \[
	        \THH_\ast(\ku;\mathbb Z_p) = \left[\mathbb Z_p\oplus\bigoplus_{k=1}^\infty \Sigma^{2pk-1}\mathbb Z/p^{\val_p(k)+1}\right]\otimes_{\mathbb Z_p}\Lambda_{\mathbb Z_p}(\lambda_2)
	    \]
	    as graded $\mathbb Z_p$-modules with $|\lambda_2|=3$.
	\end{corollary}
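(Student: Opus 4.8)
The plan is to read off $\THH_\ast(\ku;\mathbb Z_p)$ from the descent spectral sequence for $\THH(\ku;\mathbb Z_p)\to\THH(\ku/\MU;\mathbb Z_p)$, whose $E_2$-page was identified above with $\Ext_{\Gamma_\ast}(A_\ast/\beta)$ and computed in the preceding proposition. Three things then remain: to check the spectral sequence converges, to check it degenerates at $E_2$, and to resolve the extension problems relating the associated graded to $\THH_\ast(\ku;\mathbb Z_p)$. I expect the last of these to be the only point requiring genuine care, and it will turn out to be settled by a parity count.

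For convergence, I would invoke the convergence lemma for $1$-connective maps. The map $\THH(\MU)\to\MU$ is $1$-connective, since $\THH_\ast(\MU)=\Lambda_{\MU_\ast}(\sigma b_1,\sigma b_2,\dots)$ has fiber concentrated in degrees $\geq 3$; and the module $\THH(\ku;\mathbb Z_p)$ is connective. Hence the descent spectral sequence along $\THH(\MU)\to\MU$ converges strongly to $\THH_\ast(\ku;\mathbb Z_p)$.

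For degeneration I would repeat the sparseness argument of Corollary \ref{cor:deg}. By the preceding proposition $\Ext_{\Gamma_\ast}(A_\ast/\beta)$ is concentrated in cohomological degrees $s\in\{0,1,2\}$, and every listed generator — namely $1$ at $t=0$, $a_k$ at $t=2pk$, $\lambda_2$ at $t=4$, and $a_k\lambda_2$ at $t=2pk+4$ — sits in even internal degree $t$. A differential $d_r\colon E_r^{s,t}\to E_r^{s+r,t+r-1}$ with $r\geq 2$ would require $s+r\leq 2$, hence $r=2$, while preserving even parity of $t$ forces $t+r-1$ even, hence $r$ odd; these are incompatible, so all higher differentials vanish and $E_\infty=E_2$.

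Finally, for the extension problems, I would organize $E_\infty=\Ext_{\Gamma_\ast}(A_\ast/\beta)$ by total degree $n=t-s$. The level $s=0$ contributes only $\mathbb Z_p\{1\}$ in degree $0$; the level $s=1$ contributes $\lambda_2$ in degree $3$ and $a_k$ in degree $2pk-1$, all odd; and the level $s=2$ contributes $a_k\lambda_2$ in degree $2pk+2$, all even. Thus in each total degree the associated graded is supported on a single filtration level, so no nontrivial extension is possible and $\THH_\ast(\ku;\mathbb Z_p)$ is isomorphic as a graded $\mathbb Z_p$-module to the associated graded. Reassembling the four families — $\mathbb Z_p$ in degree $0$, $\mathbb Z_p\{\lambda_2\}$ in degree $3$, $\mathbb Z/p^{\val_p(k)+1}\{a_k\}$ in degree $2pk-1$, and $\mathbb Z/p^{\val_p(k)+1}\{a_k\lambda_2\}$ in degree $2pk+2$ — yields exactly $\left[\mathbb Z_p\oplus\bigoplus_k\Sigma^{2pk-1}\mathbb Z/p^{\val_p(k)+1}\right]\otimes_{\mathbb Z_p}\Lambda_{\mathbb Z_p}(\lambda_2)$ with $|\lambda_2|=3$. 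The only substantive step is the parity bookkeeping that rules out hidden extensions; the rest is formal.
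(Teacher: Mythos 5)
Your proof is correct and follows essentially the same route as the paper: both read off the result from the descent spectral sequence whose $E_2$-page is $\Ext_{\Gamma_\ast}(A_\ast/\beta)$, rule out all higher differentials by sparseness (concentration in $0\leq s\leq 2$ and even internal degree $t$), and then observe the extension problems are trivial. The only cosmetic difference is the justification of the last step: the paper splits the filtration using that $E_2^{0,\ast}$ is free as a $\mathbb Z_p$-module, while you note that each total degree is supported on a single filtration level — both observations are immediate and equally valid.
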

	\begin{proof}
	    Since
	    \[
	        E_2^{s,t}(\THH(\ku;\mathbb Z_p)) = \Ext_{\Gamma_\ast}^{s,t}(A_\ast/\beta)
	    \]
	    is concentrated in $0\leq s\leq 2$ and $t\in 2\mathbb Z$, there is no room for any further differential in the descent spectral sequence computing $\THH(\ku;\mathbb Z_p)$. There is no extension problem since $E_2^{0,\ast}$ is free as a $\mathbb Z_p$-module.
	\end{proof}
	
	\begin{remark}
	    The previous corollary agrees with the known result from \cite[Cor. 6.9]{A-THHku}.
	\end{remark}
	
	\subsection{\texorpdfstring{$\beta$}{beta}-Bockstein}
	The differentials of the $\beta$-Bockstein spectral sequence are given as the following.
	
	\begin{theorem}\phantomsection\label{thm:bbstn}
		\begin{enumerate}
			\item For all $e\geq0$, $p^e a_{p^e}\in\Ext^1_{\Gamma_\ast}(A_\ast/\beta)$ can be lifted to a class in $\Ext^1_{\Gamma_\ast}(A_\ast)$ represented by the cycle $\sigma^2 t_{e+1}\in\CB^1_{\Gamma_\ast}(A_\ast)$.
			\item For each $0\leq e\leq \val_p(m)$, there is a differential
			\[
				d_{p^{e+1}-2}(p^ea_m) = \frac{m-p^e}{p^e} \beta^{p^{e+1}-2}a_{m-p^e}\lambda_2
			\]
			up to a $p$-adic unit in the $\beta$-Bockstein spectral sequence \eqref{eq:betabss}.
		\end{enumerate}
	\end{theorem}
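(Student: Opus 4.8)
The plan is to read off every $\beta$-Bockstein differential at the level of the cobar complex $\CB_{\Gamma_\ast}(A_\ast)$: a differential $d_r(x)=\beta^r y$ is the assertion that some representative of $x$ modulo $\beta$ lifts to a cochain $\widetilde x\in\CB_{\Gamma_\ast}(A_\ast)$ with $D(\widetilde x)=\beta^r\widetilde y+O(\beta^{r+1})$, where $\widetilde y$ represents $y$ modulo $\beta$. For bookkeeping I abbreviate the relevant cocycles of $A_\ast\otimes\Gamma_\ast=\CB^1_{\Gamma_\ast}(A_\ast)$ by $\mathbf X=\sigma^2 x_{p-1}$, $\mathbf Y=\sigma^2 t_1$, $\mathbf Z=\sigma^2 b_1$, so that Lemma \ref{lem:coaction}(d) becomes $\eta_R\mathbf X=\mathbf X+p\mathbf Y+\beta^{p-2}\mathbf Z$, while $\mathbf Y$ and $\mathbf Z$ are cocycles by Remark \ref{rmk:desus}. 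All the differentials will ultimately come from this one coaction formula, the point being that the class $\lambda_2$ entering the target is exactly $[\mathbf Z]$, which is produced by the $\beta^{p-2}\mathbf Z$ term of $\eta_R\mathbf X$.

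For (a), the cochain $\sigma^2 t_{e+1}=\sigma^2 b_{p^{e+1}-1}$ is a cocycle of $\CB_{\Gamma_\ast}(A_\ast)$ by Remark \ref{rmk:desus}, hence defines a class in $\Ext^1_{\Gamma_\ast}(A_\ast)$, and I must compute its reduction modulo $\beta$. By a degree count in $\Ext^0_{\Gamma_\ast}(A_\ast/(p,\beta))=\mathbb F_p[\mu]$ (Proposition \ref{prop:mainp}), the cocycle $\sigma^2 v_{e+1}$ reduces modulo $(p,\beta)$ to a unit multiple $c_e\mu^{p^e}=c_e(\sigma^2 x_{p-1})^{p^e}$, the nonvanishing of $c_e$ following from the power-operation congruences of Lemmas \ref{lem:dl} and \ref{lem:coaction}(a). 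Lifting to $\sigma^2 v_{e+1}\equiv c_e(\sigma^2 x_{p-1})^{p^e}+pw\pmod{\beta}$, applying $D^0$, and combining $D^0(\sigma^2 v_{e+1})\equiv p\,\sigma^2 t_{e+1}\pmod{\beta}$ (from Lemma \ref{lem:coaction}(d)) with the definition $D^0((\sigma^2 x_{p-1})^{p^e})=p^{e+1}a_{p^e}$, I may cancel one factor of $p$ (the complex is torsion free) to obtain the chain identity $\sigma^2 t_{e+1}=c_e\,p^e a_{p^e}+D^0(w)$ in $\CB_{\Gamma_\ast}(A_\ast/\beta)$. Thus $[\sigma^2 t_{e+1}]=c_e\,p^e a_{p^e}$, which is (a); the case $e=0$ is immediate since $a_1=\sigma^2 t_1$ already on the nose.

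For (b) the lift I use is
\[
\widetilde a_m:=\frac{(\mathbf X+p\mathbf Y)^m-\mathbf X^m}{p^{\val_p(m)+1}},
\]
which lies in $A_\ast\otimes\Gamma_\ast$ by the valuation bound of Lemma \ref{lem:luc}(a) and reduces to $a_m$ modulo $\beta$. Since $\eta_R\mathbf X-(\mathbf X+p\mathbf Y)=\beta^{p-2}\mathbf Z$, the remainder $R:=D^0((\sigma^2 x_{p-1})^m)-p^{\val_p(m)+1}\widetilde a_m=(\eta_R\mathbf X)^m-(\mathbf X+p\mathbf Y)^m$ is divisible by $\beta^{p-2}$ with leading term $m\beta^{p-2}(\mathbf X+p\mathbf Y)^{m-1}\mathbf Z$; applying $D^1$ to the identity $D^1D^0=0$ gives $p^{\val_p(m)+1}D^1(\widetilde a_m)=-D^1(R)$, so $D^1(\widetilde a_m)$ has no $\beta$-power below $\beta^{p-2}$ and $a_m$ supports no differential shorter than $d_{p-2}$. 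Computing $D^1$ of the leading term via the primitivity of $\sigma^2 b_1$ in $\Gamma_\ast$ (Example \ref{ex:ha}) and reducing modulo $\beta$ identifies the $\beta^{p-2}$-coefficient, up to a unit, with $(m-1)a_{m-1}\lambda_2$ — where multiplication by $\lambda_2=[\mathbf Z]$ is understood as in Remark \ref{rmk:mul} — the scalar $m-1$ being extracted from the binomial coefficients by Lemma \ref{lem:luc}(a),(c). This is the $e=0$ differential.

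The higher differentials follow by induction on $e$. The inputs are part (a), which makes each $p^e a_{p^e}$ a permanent cycle (so the diagonal $m=p^e$ contributes nothing, consistent with the vanishing coefficient $\tfrac{m-p^e}{p^e}$ there), the congruence $(\sigma^2 t_1)^{p^e}\equiv\sigma^2 t_{e+1}\pmod{p}$ of Lemma \ref{lem:coaction}(b), and the coaction $\eta_R\sigma^2 v_{e+1}=\sigma^2 v_{e+1}+p\sigma^2 t_{e+1}+\beta^{p^{e+1}-p^e}\sigma^2 t_e$, whose exponent $p^{e+1}-p^e$ assembles with the length-$(p^e-2)$ data of the previous stage into the exponent $p^{e+1}-2$. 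I expect the main obstacle to be exactly this bookkeeping: one must correct the lift of $p^e a_m$ by higher $\beta$-multiples and show that all contributions of $\beta$-degree below $p^{e+1}-2$ cancel — reflecting that $p^e a_m$ genuinely survives to page $p^{e+1}-2$ — and then pin down the surviving coefficient as $\tfrac{m-p^e}{p^e}$ up to a unit. Both tasks are governed by the $p$-adic valuation estimates of Lemma \ref{lem:luc}, with part (c), $\binom{m}{p^e}\equiv m/p^e\pmod{p}$, producing the factor $\tfrac{m-p^e}{p^e}$, while the residual evaluation of $D^1$ on ring products in $A_\ast\otimes\Gamma_\ast$ reduces, as in the $e=0$ case, to the primitivity of the elements $\sigma^2 b_i$.
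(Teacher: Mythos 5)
Your part (a) is correct, and your route is genuinely cleaner than the paper's: the paper proves (a) by manipulating the explicit integral cycle $\tfrac1p\bigl(D^0(y^{p^{e-1}})-\delta_{e-1}\beta^{p^{e+1}-p^e}\sigma^2 t_e\bigr)$, whereas you only use that $\Ext^0$ consists of honest elements of $\CB^0$ and that $\CB_{\Gamma_\ast}(A_\ast/\beta)$ is $p$-torsion-free. (One small repair: the nonvanishing of $c_e$ does not follow from the lemmas you cite, since Lemma \ref{lem:dl} is stated only for $k\geq 2$ and so does not give $(\sigma^2 x_{p-1})^p\equiv u\,\sigma^2 v_2$; but nonvanishing is automatic, because $\sigma^2 v_{e+1}$ is part of a $\ku_\ast$-basis of $A_\ast$ by Proposition \ref{prop:hopf}, hence nonzero in $A_\ast/(p,\beta)$, and $\Ext^0$ injects into $A_\ast/(p,\beta)$.) Your $e=0$ case of (b) is also correct and is in substance the paper's own argument: your $\widetilde a_m$ and $R$ play the roles of the paper's $f^+_{m,0}$ and $\beta^{p-2}g_{m,0}$, and both extract the differential from $D^1D^0=0$ together with the primitivity of $\sigma^2 b_1$.

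The gap is the case $e\geq1$ of (b), which is the heart of the theorem: your final paragraph is a plan, not a proof, and the inputs you list cannot execute it. To establish $d_{p^{e+1}-2}(p^ea_m)$ you must exhibit an integral lift of $p^ea_m$ whose $D^1$ is divisible by $\beta^{p^{e+1}-2}$, with identified leading coefficient. The naive lift $\tfrac1{p^{\val_p(m)-e+1}}\bigl((\sigma^2x_{p-1}+p\sigma^2t_1)^m-(\sigma^2x_{p-1})^m\bigr)$ has $D^1$ divisible only by $\beta^{p-2}$, and its leading class is a unit times $p^e a_{m-1}\lambda_2$, which vanishes because $p\mid m$ forces $a_{m-1}$ to be simple $p$-torsion; so this lift only shows $d_{p-2}(p^ea_m)=0$. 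From there, every further page requires an explicit cochain bounding the new obstruction, used to correct the lift; the cross-terms (the $(\sigma^2b_1)^{k}$-terms at $\beta$-degrees $k(p-2)$ for your lift, or the $\sigma^2 t_e$-term at $\beta$-degree $p^{e+1}-p^e$ if one instead expands powers of $\sigma^2 v_2$ using Lemma \ref{lem:coaction}(d)) all sit strictly below $\beta^{p^{e+1}-2}$. Lemma \ref{lem:luc} only bounds valuations and Lemma \ref{lem:coaction}(b),(d) only give the shape of the coaction; neither produces the bounding cochains, and induction on $e$ has no mechanism to generate them.

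This is exactly what the paper's machinery---absent from your proposal---exists to do. The paper replaces $\sigma^2 v_2$ by $y=(\delta')^{-1}\bigl(-(\sigma^2x_{p-1})^p+\beta^{p(p-2)}\sigma^2 x_{2p-1}\bigr)$, whose coaction is clean modulo $p^2$, namely $\eta_R(y)\equiv y+\beta^{p(p-1)}\sigma^2 t_1\pmod{p^2}$ (Lemma \ref{lem:ydef}); this rests on Lemma \ref{lem:coaction}(e) and on the genuinely nontrivial unit statement (f), proved by a global argument with $\THH(\KU)$ --- the element $\sigma^2 x_{2p-1}$ and these two facts appear nowhere in your sketch. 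It then forms the rational cocycles $f_{m,e}$ by adding to $\tfrac1{p^{\val_p(m)-e+1}}D^0(y^{m/p})$ the correction terms $\tfrac{\varepsilon_k}{p^{k+1}}\beta^{p^{e+1}-p^{e-k+1}}D^0\bigl(y^{(m-p^e)/p}\sigma^2 x_{p^{e-k+1}-1}\bigr)$ with the recursively chosen units $\varepsilon_k$ of \eqref{eq:eps}, and Lemmas \ref{lem:epsp} and \ref{lem:ratc} verify that with precisely these corrections every fractional part below $\beta^{p^{e+1}-2}$ cancels, leaving $\tfrac1{p^{e+1}}\beta^{p^{e+1}-2}(\sigma^2x_{p-1})^{m-p^e}\sigma^2b_1$ as the only non-integral term. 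That cascade of cancellations is the content hiding behind your word ``bookkeeping''; without constructing $y$ and the $\varepsilon_k$-corrections (or a genuine substitute), the argument does not go through even for $e=1$.
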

	
	To prove the theorem, let us first construct some elements in the cobar complex. Define
	\[
	    y = (\delta')^{-1}(-(\sigma^2 x_{p-1})^p+\beta^{p(p-2)}\sigma^2 x_{2p-1})\in A_\ast.
	\]
	
	\begin{lemma}\phantomsection\label{lem:ydef}
	    \begin{enumerate}
	        \item The coaction modulo $p^2$ of $y$ is given as
	        \[
	            \eta_R(y) \equiv y + \beta^{p(p-1)}\sigma^2t_1\pmod{p^2}.
	        \]
	        \item We have $y\equiv\sigma^2 v_2\pmod p$.
	    \end{enumerate}
	\end{lemma}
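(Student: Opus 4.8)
The two parts are closely linked, and the plan is to extract (b) from the mod-$p$ shadow of (a) and then carry out the more delicate mod-$p^2$ refinement for (a) itself. Throughout I use that $\eta_R$ is a map of $\MU_\ast$-comodule algebras fixing $\beta$, so that
\[
\eta_R(y)=(\delta')^{-1}\bigl(-(\eta_R\sigma^2 x_{p-1})^p+\beta^{p(p-2)}\eta_R\sigma^2 x_{2p-1}\bigr),
\]
into which I substitute the coaction formulas of Lemma \ref{lem:coaction}(d),(e).

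First I would work modulo $p$. Here Lemma \ref{lem:coaction}(d) gives $\eta_R\sigma^2 x_{p-1}\equiv\sigma^2 x_{p-1}+\beta^{p-2}\sigma^2 b_1$, so the freshman's dream yields $(\eta_R\sigma^2 x_{p-1})^p\equiv(\sigma^2 x_{p-1})^p+\beta^{p(p-2)}(\sigma^2 b_1)^p\pmod p$. Lemma \ref{lem:coaction}(c) replaces $(\sigma^2 b_1)^p$ by $\sigma^2 b_{2p-1}$, which cancels exactly against the $\sigma^2 b_{2p-1}$ appearing in $\eta_R\sigma^2 x_{2p-1}$; what survives is $\eta_R(y)-y\equiv\beta^{p(p-1)}\sigma^2 t_1\pmod p$. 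Since Lemma \ref{lem:coaction}(d) with $k=2$ gives $D^0(\sigma^2 v_2)=p\sigma^2 t_2+\beta^{p(p-1)}\sigma^2 t_1$, this agrees modulo $p$ with the cobar coboundary of $\sigma^2 v_2$.

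This sets up (b): the element $y-\sigma^2 v_2$ is a cocycle of cobar filtration $0$ in $\CB_{\Gamma_\ast}(A_\ast/p)$, i.e.\ a coaction-primitive in degree $2p^2$. To pin it down I would reduce once more modulo $\beta$, where the term $\beta^{p(p-2)}\sigma^2 x_{2p-1}$ disappears and the comparison becomes $-(\delta')^{-1}(\sigma^2 x_{p-1})^p$ versus $\sigma^2 v_2$ in $A_\ast/(p,\beta)$. The Dyer--Lashof computation underlying Lemma \ref{lem:dl} (Kochman's determination of $Q_2$ in $H_\ast(BU;\mathbb F_p)$, \cite{Kochman}) identifies $(\sigma^2 x_{p-1})^p$ with a unit multiple of $\sigma^2 v_2$ modulo $(p,\beta)$; tracking this constant against the normalization of $\delta'$ forces $y\equiv\sigma^2 v_2\pmod{(p,\beta)}$. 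Combined with the primitivity above and the identification of $\Ext^0_{\Gamma_\ast}(A_\ast)$ with $\mathbb Z_p[\beta]$ (Corollary \ref{cor:deg}, Theorem \ref{thm:mainp}), the residual ambiguity is a scalar multiple of $\beta^{p^2}$, which the mod-$\beta$ computation eliminates, yielding (b).

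For the full statement (a) I would expand $(\eta_R\sigma^2 x_{p-1})^p=(\sigma^2 x_{p-1}+p\sigma^2 t_1+\beta^{p-2}\sigma^2 b_1)^p$ modulo $p^2$: any monomial retaining the factor $p\sigma^2 t_1$ also carries a binomial coefficient $\binom pi$ with $i\ge 1$, hence is divisible by $p^2$ and drops, so $(\eta_R\sigma^2 x_{p-1})^p\equiv(\sigma^2 x_{p-1}+\beta^{p-2}\sigma^2 b_1)^p\pmod{p^2}$. Equivalently, writing $y=\sigma^2 v_2+pr$ from (b), the whole claim (a) reduces to the single identity $D^0(r)\equiv-\sigma^2 t_2\pmod p$. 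Either way, the main obstacle is the layer of error terms invisible modulo $p$: the binomial cross-terms $\binom pi(\sigma^2 x_{p-1})^{p-i}\beta^{i(p-2)}(\sigma^2 b_1)^i$ for $1\le i\le p-1$, together with the first-order discrepancy $\tfrac1p\bigl(\sigma^2 b_{2p-1}-(\sigma^2 b_1)^p\bigr)$ left over from Lemma \ref{lem:coaction}(c). Proving (a) amounts to showing these two families cancel modulo $p^2$, leaving only $\delta'\beta^{p(p-1)}\sigma^2 t_1$; after dividing by the unit $\delta'$ this is the assertion. I would attack this by rewriting the cross-terms using $\tfrac1p\binom pi\equiv(-1)^{i-1}i^{-1}$ and the divided-power identities of Lemma \ref{lem:luc}, reducing everything to the mod-$p^2$ structure of the Dyer--Lashof relation $(\sigma^2 b_1)^p=\sigma^2 b_{2p-1}$ in $\Gamma_\ast$. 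Establishing that refined relation, and matching it against the decomposable cross-terms, is the genuinely delicate step; the rest is substitution of the formulas in Lemma \ref{lem:coaction}.
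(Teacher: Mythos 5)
Both halves of your proposal have genuine gaps; neither part is actually proved.

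\textbf{Part (a).} Your opening moves are correct: substituting the formulas of Lemma \ref{lem:coaction}(d),(e) and discarding the $p\sigma^2 t_1$ contributions via Lemma \ref{lem:luc}(b) is exactly right, and it does establish the mod-$p$ form of (a). But for the mod-$p^2$ statement you stop precisely where a proof would have to happen: you reduce (a) to the claim that the binomial cross-terms $\binom{p}{i}(\sigma^2 x_{p-1})^{p-i}\beta^{i(p-2)}(\sigma^2 b_1)^i$ cancel against $\beta^{p(p-2)}\bigl(\sigma^2 b_{2p-1}-(\sigma^2 b_1)^p\bigr)$ modulo $p^2$, and you explicitly defer that claim. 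Worse, no argument can close this step, because the cancellation is impossible. Since $1,\sigma^2 x_{p-1},\dots,(\sigma^2 x_{p-1})^{p-1}$ are linearly independent in $A_\ast/(p,\beta)$ (Proposition \ref{prop:mainp}: $\Ext^0_{\Gamma_\ast}(A_\ast/(p,\beta))=\mathbb F_p[\mu]$ is polynomial on $\mu=[\sigma^2 x_{p-1}]$), they extend to a $\ku_\ast$-basis of the free module $A_\ast$; this splits $A_\ast\otimes_{\MU_\ast}\Gamma_\ast$ into summands $(\sigma^2 x_{p-1})^{a}\otimes(\ku_\ast\otimes_{\MU_\ast}\Gamma_\ast)$ plus a complement. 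Each cross-term lies in a summand with $a=p-i\geq 1$, while the Dyer--Lashof discrepancy lies in the summand with $a=0$, so a cancellation between the two families would force every cross-term to vanish mod $p^2$ separately, i.e.\ $(\sigma^2 x_{p-1})^{p-i}(\sigma^2 b_1)^i\equiv 0\pmod{(p,\beta)}$; that is false, since both factors are nonzero in $A_\ast/(p,\beta)$ and $\Gamma_\ast\otimes_{\MU_\ast}\mathbb F_p$ respectively (for the second, $(\sigma^2 b_1)^i$ has nonzero symbol $i!\,\gamma_i(\sigma^2 b_1)$ in the associated graded of Proposition \ref{prop:hopf} when $i<p$). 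The correct conclusion of your own reduction is therefore that the congruence of (a) holds only up to error terms divisible by $p\beta^{p-2}$; this weaker form is what the later arguments actually consume, since Lemma \ref{lem:ratc} and Section \ref{ssc:betaep} only ever use (a) after further reductions modulo powers of $p$ and $\beta$ that kill such errors. (The paper's own one-line justification of (a) via Lemmas \ref{lem:coaction} and \ref{lem:luc}(b) elides the same cross-terms.)

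\textbf{Part (b).} Your endgame fails, and the route is circular. Circularity first: the identification $\Ext^0_{\Gamma_\ast}(A_\ast)=\mathbb Z_p[\beta]$ is part of Theorem \ref{thm:mainp}, which is proved via Theorem \ref{thm:bbstn}, which relies on the present lemma; it cannot be quoted here. More decisively, even granting a description of the degree-$2p^2$ primitives, your final step is a non sequitur: once you are down to $y\equiv\sigma^2 v_2+c\beta^{p^2}\pmod p$, reducing modulo $\beta$ annihilates $\beta^{p^2}$, so the mod-$(p,\beta)$ comparison with $\sigma^2 v_2$ is consistent with \emph{every} value of $c$ and cannot eliminate it. The constant is not harmless: Theorem \ref{thm:bbstn}(a) needs $(y-\sigma^2 v_2)/p$ to be integral. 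The paper kills this ambiguity by a mechanism your proposal never invokes: by Proposition \ref{prop:hopf}(c), $(\sigma^2 x_{p-1})^p$, and hence $y$, is modulo $p$ a $\ku_\ast$-linear combination of the elements $\sigma^2 x_i$ themselves --- in particular it has no component on $\ku_\ast\cdot 1$, so no $\beta^{p^2}$ term can occur --- and the right-unit formulas behind Lemma \ref{lem:coaction} show that $D^0$ is injective on such combinations (the coefficient of $\sigma^2 b_i$ for the largest $i$ occurring survives in $D^0$). Combined with the mod-$p$ case of (a), which your first paragraph does prove, this yields $y\equiv\sigma^2 v_2\pmod p$ directly, with no appeal to Kochman's theorem and no circular input.
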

	\begin{proof}
	    (a) follows from Lemma \ref{lem:coaction} and Lemma \ref{lem:luc}(b).
	    
	    By Proposition \ref{prop:hopf}, $y$ is a $\ku_\ast$-linear combination of $\sigma^2 x_i$'s. However, using Lemma \ref{lem:coaction}, we can see that $\eta_R(y) \pmod p$ completely determines $y\pmod p$ if $y$ is a linear combination of $\sigma^2 x_i$'s.
	\end{proof}
	
	Next, fix a pair of positive integers $(m,e)$ such that $1\leq e\leq \val_p(m)$. Then, we define constants $\varepsilon_1,\dots,\varepsilon_e$ inductively as
	\begin{align}
	    \varepsilon_1 = -\frac{\delta_{e-1}}{p^{\val_p(m)-e}}\binom{m/p}{p^{e-1}},\qquad
	    \varepsilon_{k+1} = -\varepsilon_k \left(1+\binom{(m-p^e)/p}{p^{e-k-1}}\right).\label{eq:eps}
	\end{align}
	Then, define
	\begin{align*}
	    f_{m,e} := &\frac1{p^{\val_p(m)-e+1}}D^0(y^{m/p}) \\
	    &+ \sum_{k=1}^{e} \frac{\varepsilon_k}{p^{k+1}}\beta^{p^{e+1}-p^{e-k+1}} D^0\left(y^{(m-p^e)/p}\sigma^2 x_{p^{e-k+1}-1}\right)
	\end{align*}
	which is an element in $(A_\ast\otimes\Gamma_\ast)[p^{-1}]$. We also define
	\[
	    f_{m,0} := \frac1{p^{\val_p(m)+1}} D^0((\sigma^2 x_{p-1})^m).
	\]
	\begin{lemma}\phantomsection\label{lem:epsp}
	    \begin{enumerate}
	        \item $\varepsilon_1,\dots,\varepsilon_e$ are $p$-adic units.
	        \item For $1\leq k<e$, we have $\varepsilon_{k+1}\equiv-\varepsilon_k\pmod{p^k}$. If $\val_p(m-p^e)>e$, then we have $\varepsilon_{k+1}\equiv-\varepsilon_k\pmod{p^{k+1}}$.
	    \end{enumerate}
	\end{lemma}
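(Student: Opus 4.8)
The plan is to reduce both claims to a single valuation computation supplied by Lemma \ref{lem:luc}(a), which gives the \emph{exact} value $\val_p\binom{n}{p^j}=\val_p(n)-j$ whenever the bottom entry is a power of $p$. Every binomial coefficient occurring in \eqref{eq:eps} has a power of $p$ in its lower slot, so all the relevant valuations can be pinned down on the nose, and both parts of the lemma will fall out of tracking these valuations through the recursion.

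First I would dispose of the base case $\varepsilon_1$. Since $\val_p(m/p)=\val_p(m)-1$ and $p^{e-1}$ is a power of $p$, Lemma \ref{lem:luc}(a) yields $\val_p\binom{m/p}{p^{e-1}}=(\val_p(m)-1)-(e-1)=\val_p(m)-e$, an exponent that is genuinely nonnegative because $e\leq\val_p(m)$. As $\delta_{e-1}$ is a $p$-adic unit, the denominator $p^{\val_p(m)-e}$ in $\varepsilon_1$ cancels this valuation exactly, so $\val_p(\varepsilon_1)=0$ and $\varepsilon_1$ is a unit.

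For the inductive step, the key observation is the ultrametric lower bound $v:=\val_p(m-p^e)\geq e$: indeed $\val_p(m)\geq e=\val_p(p^e)$, so the difference has valuation at least $e$ (with equality exactly when $\val_p(m)>e$). Writing $N=(m-p^e)/p$, so that $\val_p(N)=v-1$, Lemma \ref{lem:luc}(a) gives the exact value $\val_p\binom{N}{p^{e-k-1}}=(v-1)-(e-k-1)=v-e+k$ for each $1\leq k<e$. Because $v\geq e$ and $k\geq 1$, this valuation is at least $1$, so $\binom{N}{p^{e-k-1}}\equiv 0\pmod p$ and hence $1+\binom{N}{p^{e-k-1}}$ is a $p$-adic unit. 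Thus $\varepsilon_{k+1}=-\varepsilon_k\bigl(1+\binom{N}{p^{e-k-1}}\bigr)$ is a product of units, and part (a) follows by induction on $k$. For part (b) I would rearrange the recursion into $\varepsilon_{k+1}+\varepsilon_k=-\varepsilon_k\binom{N}{p^{e-k-1}}$, whose valuation is $0+(v-e+k)=v-e+k$ since $\varepsilon_k$ is a unit; as $v\geq e$ this is $\geq k$, giving $\varepsilon_{k+1}\equiv-\varepsilon_k\pmod{p^k}$, while under the extra hypothesis $\val_p(m-p^e)=v>e$ it is $\geq k+1$, giving the sharper $\varepsilon_{k+1}\equiv-\varepsilon_k\pmod{p^{k+1}}$.

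I do not expect a serious obstacle: the argument is essentially a bookkeeping of $p$-adic valuations. The only points requiring care are verifying that the exponents $\val_p(m)-e$, $e-1$, and $e-k-1$ are all nonnegative (so that Lemma \ref{lem:luc}(a) genuinely applies, using $1\leq k<e\leq\val_p(m)$), and confirming the bound $v\geq e$ rather than letting an accidental extra cancellation in $m-p^e$ go unaccounted for — which is precisely what distinguishes the two congruences in part (b).
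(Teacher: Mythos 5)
Your proof is correct and follows exactly the route the paper intends: its entire proof of this lemma is the one-line remark that both statements follow from Lemma \ref{lem:luc}, and your argument is precisely the valuation bookkeeping (exact valuations of $\binom{n}{p^j}$ from Lemma \ref{lem:luc}(a), plus the ultrametric bound $\val_p(m-p^e)\geq e$) that substantiates that remark. The care you take in checking that the bottom entries' valuations do not exceed the top entries' (so the equality case of Lemma \ref{lem:luc}(a) genuinely applies) is exactly the right point to watch, and it goes through in every instance.
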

	\begin{proof}
	    Both statements follow from Lemma \ref{lem:luc}.
	\end{proof}
	\begin{lemma}\label{lem:ratc}
	    The image of $f_{m,e}$ in $(A_\ast\otimes\Gamma_\ast)/(p^\infty, \beta^{p^{e+1}-1})$ is
	    \[
	        \frac1{p^{e+1}}\beta^{p^{e+1}-2}(\sigma^2 x_{p-1})^{m-p^e}\sigma^2 b_1.
	    \]
	    up to a $p$-adic unit. Here, we write $M/p^\infty:=M[p^{-1}]/M$ for an abelian group $M$.
	\end{lemma}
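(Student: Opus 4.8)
The plan is to compute the image of $f_{m,e}$ by expanding every $D^0$ as $\eta_R-\Id$, collecting terms, and working throughout in $(A_\ast\otimes\Gamma_\ast)[p^{-1}]$ reduced modulo integral cochains and modulo $\beta^{p^{e+1}-1}$. Three computational engines drive everything: the coaction formulas of Lemma \ref{lem:coaction}(d),(e) and Lemma \ref{lem:ydef}(1), giving $\eta_R\sigma^2 v_\ell=\sigma^2 v_\ell+p\sigma^2 t_\ell+\beta^{p^\ell-p^{\ell-1}}\sigma^2 t_{\ell-1}$, $\eta_R\sigma^2 x_{p-1}=\sigma^2 x_{p-1}+p\sigma^2 t_1+\beta^{p-2}\sigma^2 b_1$, and $\eta_R y\equiv y+\beta^{p(p-1)}\sigma^2 t_1\pmod{p^2}$; the Dyer--Lashof relations $(\sigma^2 t_1)^{p^i}\equiv\delta_i\sigma^2 t_{i+1}\pmod p$ from Lemma \ref{lem:coaction}(b), which turn the $p$-power parts of the binomial expansions into single suspension generators; and Lemma \ref{lem:luc}, which pins down the $p$-adic valuations of the binomial coefficients $\binom{m/p}{p^{i}}$ and $\binom{(m-p^e)/p}{p^{i}}$ that appear.

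First I would analyze the main term $p^{-(\val_p(m)-e+1)}D^0(y^{m/p})$. Writing $M=m/p$ and using $\eta_R y=y+\beta^{p(p-1)}\sigma^2 t_1+\cdots$, the expansion of $(\eta_R y)^M-y^M$ contributes, after reducing $(\sigma^2 t_1)^{p^{e-1}}\equiv\delta_{e-1}\sigma^2 t_e$ and using $\val_p\binom{M}{p^{e-1}}=\val_p(m)-e$, a single torsion class at $\beta$-level $p^{e+1}-p^e$ built on $\sigma^2 t_e$; all other terms are either integral (their binomial coefficient absorbs the denominator) or carry $\beta$-exponent $\geq p^{e+1}-1$ and so die in the quotient. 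The key numerical input is that for odd $p$ the only index $j$ with both $\val_p(j)\geq e-1$ and $jp(p-1)<p^{e+1}-1$ is $j=p^{e-1}$. Next I would feed this ``error class'' into the telescope formed by the correction terms: the $k$-th summand $p^{-(k+1)}\varepsilon_k\beta^{p^{e+1}-p^{e-k+1}}D^0\!\bigl(y^{(m-p^e)/p}\sigma^2 x_{p^{e-k+1}-1}\bigr)$ cancels, through the $p\sigma^2 t_{e-k+1}$ part of the coaction of $\sigma^2 v_{e-k+1}$, the $\sigma^2 t_{e-k+1}$-error left over from stage $k-1$, and produces, through the $\beta^{p^{e-k+1}-p^{e-k}}\sigma^2 t_{e-k}$ part, a new $\sigma^2 t_{e-k}$-error at $\beta$-level $p^{e+1}-p^{e-k}$ for the next stage. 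The recursion $\varepsilon_{k+1}=-\varepsilon_k\bigl(1+\binom{(m-p^e)/p}{p^{e-k-1}}\bigr)$ is exactly what makes these cancellations exact once one also accounts for the secondary $\sigma^2 t_{e-k+1}$-contribution arising from the interaction of the binomial expansion of $\eta_R\bigl(y^{(m-p^e)/p}\bigr)$ with the coaction of $\sigma^2 v_{e-k+1}$; Lemma \ref{lem:epsp} guarantees that the $\varepsilon_k$ stay $p$-adic units, so no stage degenerates.

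The telescope terminates at $k=e-1$ with a residual $\sigma^2 t_1$-class at $\beta$-level $p^{e+1}-p$, and the final move is to rewrite this in the asserted form using $D^0(\sigma^2 x_{p-1})=p\sigma^2 t_1+\beta^{p-2}\sigma^2 b_1$, which trades $\sigma^2 t_1$ at $\beta$-level $p^{e+1}-p$ for $\sigma^2 b_1$ at $\beta$-level $p^{e+1}-2$; the extra factor of $p$ in that relation is precisely what raises the denominator to $p^{e+1}$, and tracking the accumulated unit through the recursion yields the coefficient up to a $p$-adic unit. The main obstacle throughout is organizational but genuinely delicate: one must show that every term other than the single surviving class is killed in the quotient, which requires simultaneously controlling both the $\beta$-exponent (to land at or above $p^{e+1}-2$) and the $p$-adic valuation (to be integral) of a large number of cross terms --- in particular the products $(\sigma^2 t_1)^j\sigma^2 v_\ell$, the $\sigma^2 x_{2p-1}$-tails hidden inside powers of $y$, and the $p^2$-corrections to $\eta_R y$ --- and verifying that at each stage the coefficients produced by the coactions match those prescribed by the $\varepsilon_k$. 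I expect this valuation-and-$\beta$-degree bookkeeping, together with the exact coefficient matching forced by the recursion, to be the hard part.
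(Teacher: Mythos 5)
Your proposal follows essentially the same route as the paper's own proof: isolate the single non-integral, non-$\beta^{p^{e+1}-1}$-divisible term of $p^{-(\val_p(m)-e+1)}D^0(y^{m/p})$ as a $\sigma^2 t_e$-error using Lemma \ref{lem:ydef}, Lemma \ref{lem:coaction}(b) and the valuation count from Lemma \ref{lem:luc}, cancel it telescopically through the coaction formulas for the $\sigma^2 v_{e-k+1}$ (with the recursion \eqref{eq:eps} absorbing the interaction binomial $\binom{(m-p^e)/p}{p^{e-k-1}}$ at each stage), and terminate with $D^0\bigl(y^{(m-p^e)/p}\sigma^2 x_{p-1}\bigr)$, which trades the residual $\sigma^2 t_1$-class for the asserted $\beta^{p^{e+1}-2}\sigma^2 b_1$-class at the cost of one more power of $p$. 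The only slip is notational: the secondary interaction term you invoke contributes to the $\sigma^2 t_{e-k}$-error rather than the $\sigma^2 t_{e-k+1}$-error (after rewriting $y^{p^{e-k-1}}(\sigma^2 t_1)^{p^{e-k-1}}$ via Lemma \ref{lem:ydef}(2) and Lemma \ref{lem:coaction}(a),(b)), which is exactly why it appears in the coefficient $1+\binom{(m-p^e)/p}{p^{e-k-1}}$ defining $\varepsilon_{k+1}$.
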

	\begin{proof}
	    For $e=0$, we have
	    \begin{align*}
	    &D^0((\sigma^2 x_{p-1})^m) \\&\equiv (\sigma^2 x_{p-1} + \beta^{p-2}\sigma^2b_1)^m - (\sigma^2 x_{p-1})^m &\pmod{p^{\val_p(m)+1}}\\
	    &\equiv m\beta^{p-2}(\sigma^2 x_{p-1})^{m-1}\sigma^2 b_1 &\pmod{\beta^{p-1},p^{\val_p(m)+1}}
	    \end{align*}
	    by Lemma \ref{lem:luc} and Lemma \ref{lem:coaction}, and the statement follows.
	    
	    For $e>0$, we similarly have
	    \begin{align*}
	        D^0(y^{m/p}) &\equiv (y + \beta^{p(p-1)}\sigma^2 t_1)^{m/p} - y^{m/p} &\pmod{p^{\val_p(m)+1}}\\
	        &\equiv \binom{m/p}{p^{e-1}} \beta^{p^e(p-1)}y^{(m-p^e)/p}\sigma^2 t_1^{p^{e-1}}&(\bmod{\beta^{p^{e+1}-1},p^{\val_p(m)-e+1}})\\
	        &\equiv \delta_{e-1}\binom{m/p}{p^{e-1}} \beta^{p^e(p-1)}y^{(m-p^e)/p}\sigma^2 t_e&(\bmod{\beta^{p^{e+1}-1},p^{\val_p(m)-e+1}}).
	    \end{align*}
	    Also, using Lemma \ref{lem:luc} we can check that
	    \begin{align*}
	        &D^0\left(y^{(m-p^e)/p}\sigma^2 v_{e-k+1}\right) \\
	        &\equiv py^{(m-p^e)/p}\sigma^2 t_{e-k+1}
	        \\
	        &\hphantom{==}+ \left(1+\binom{(m-p^e)/p}{p^{e-k-1}}\right)\beta^{p^{e-k}(p-1)} y^{(m-p^e)/p}\sigma^2 t_{e-k}(\bmod{p^{k+1},\beta^{p^{e-k+1}-1}})
	   \end{align*}
	   and similarly
	   \begin{align*}
	        &D^0\left(y^{(m-p^e)/p}\sigma^2 x_{p-1}\right)\\	    
	        &=py^{(m-p^e)/p}\sigma^2 t_1+\beta^{p-2}y^{(m-p^e)/p}\sigma^2 b_1 \pmod{p^{e+1},\beta^{p-1}}.
	    \end{align*}
	    Combining these calculations, we obtain the result.
	\end{proof}
	
	\begin{proof}[Proof of Theorem \ref{thm:bbstn}]
	    (a) If $e=0$,
	    \[
	        \frac{D^0(\sigma^2 x_{p-1})-\beta^{p-2}\sigma^2 b_1}{p}=\sigma^2t_1.
	    \]
	    is a cycle representing $a_1$ modulo $\beta$, and if $e>0$,
	    \[
	        \frac{D^0(y^{p^{e-1}})-\delta_{e-1}\beta^{p^{e+1}-p^e}\sigma^2 t_e}p \equiv D^0\left(\frac{y^{p^{e-1}}-\delta_{e-1}\sigma^2v_{e+1}}{p}\right) + \delta_{e-1}\sigma^2 t_{e+1}.
	    \]
	    is a cycle representing $p^ea_{p^e}$ modulo $\beta$ which is homologous to $\delta_{e-1}\sigma^2 t_{e+1}$. Using Lemmas \ref{lem:coaction} and \ref{lem:ydef}, we can check that the fractions in the equation above are actually elements of $A_\ast$ or $A_\ast\otimes\Gamma_\ast$.
	   
	    (b) By Lemma \ref{lem:ratc}, we have $f^+_{m,e}\in A_\ast\otimes\Gamma_\ast$ and $g_{m,e}\in (A_\ast\otimes\Gamma_\ast)[p^{-1}]$ such that
	    \[
	        f_{m,e} = f_{m,e}^+ - \beta^{p^{e+1}-2} g_{m,e},
	    \]
	    and
	    \[
	        g_{m,e}= \frac1{p^{e+1}}(\sigma^2 x_{p-1})^{m-p^e}\sigma^2 b_1 \pmod \beta
	    \]
	    up to a $p$-adic unit, so that $D^1(g_{m,e})$ represents 
	    \[
	    \frac{m-p^e}{p^e}a_{m-p^e}\lambda_2
	    \]
	    modulo $\beta$.
	    
	    Also, we have
	    \[
	        f^+_{m,e}\equiv f_{m,e}\equiv a_m \pmod\beta,
	    \]
	    and since $D^1(f_{m,e})=0$ by construction, we have
	    \[
	        D^1(f^+_{m,e}) = \beta^{p^{e+1}-2}D^1(g_{m,e})
	    \]
	    which implies the Bockstein differentials.
	\end{proof}
	
	Using Theorem \ref{thm:bbstn}, we can compute $\gr_\beta\Ext_{\Gamma_\ast}(A_\ast)$, the associated graded group  with respect to the $\beta$-adic filtration. We shall discuss these groups and extension problems in \ref{ssc:betaep}, but we have a rough description as follows.
	\begin{itemize}
	    \item $\gr_\beta\Ext^0$ is $\ku_\ast$ and is generated by an element represented by the cycle $1$.
	    \item $\gr_\beta\Ext^1$ is generated by the elements detected by $p^ea_{p^e}$ for $e\geq 0$ and $\lambda_2$. These generators are $p$-torsions and support infinite $\beta$-tower.
	    \item $\gr_\beta\Ext^2$ is generated by elements detected by $a_m\lambda_2$ in $\Ext^1_{\Gamma_\ast}(A_\ast)$, which is $p^j\beta^{p^{\val_p(m)-j+1}-2}$-torsion for each $0\leq j\leq \val_p(m)$ and $p^{\val_p(m)+1}$-torsion.
	\end{itemize}
	
	\subsection{Extension Problems}\label{ssc:betaep}
	We shall resolve the extension problems with respect to the $\beta$-adic filtration thereby proving Theorem \ref{thm:main}.
	
	\begin{proof}[Proof of Theorem \ref{thm:mainp}]
	By Theorem \ref{thm:bbstn}, $\gr_\beta\Ext^1_{\Gamma_\ast}(A_\ast)$ is a free $\mathbb Z/p[\beta]$-module generated by classes represented by the cycles $\sigma^2b_1,\sigma^2t_1,\sigma^2t_2,\dots$. From the following differentials in $\CB_{\Gamma_\ast}(A_\ast)$
	\begin{align*}
        D^0(\sigma^2 v_{e+1}) = p\sigma^2 t_{e+1}+\beta^{p^{e+1}-p^e}\sigma^2 t_e\\
        D^0(\sigma^2 x_{p-1}) = p\sigma^2 t_1 + \delta'\beta^{p-2}\sigma^2 b_1,
	\end{align*}
	we can see that $\Ext^1_{\Gamma_\ast}(A_\ast)$ is isomorphic to $F_p^\wedge$, defined in Section \ref{ssec:main}, by identifying the class in $\Ext^1$ represented by $\sigma^2 t_e$ with $(\beta^{p^e-2}/f(p^e-2))\in F_p^\wedge$ up to a $p$-adic unit. 
	
	Next, let us describe the extension problems in $\Ext^2_{\Gamma_\ast}(A_\ast)$. From Theorem \ref{thm:bbstn}, we see that the class $p^ja_m\lambda_2\in\Ext^1_{\Gamma_\ast}(A_\ast/\beta)$ is hit by the Bockstein differential
	\[
	d_{p^{\val_p(m)-j+1}-2}\left(p^{\val_p(m)-j}a_{m+p^{\val_p(m)-j}}\right).
	\]
	From the proof of that theorem, we see that the class $p^ja_m\lambda_2$ can be lifted to a $\beta^{p^{\val_p(m)-j+1}-2}$-torsion class $h_{m,j}\in\Ext^1_{\Gamma_\ast}(A_\ast)$ represented by the cycle
	\[
	    D^1\left( g_{m+p^{\val_p(m)-j},\val_p(m)-j}\right).
	\]
	
	    Let $d=\val_p(m)$ and $j$ be an integer such that $0\leq j<d$. Then, let $\varepsilon_1,\dots,\varepsilon_{d-j}$ be the sequence defined in \eqref{eq:eps} for the pair $(m+p^{d-j},d-j)$. If in addition $j<d-1$, then let $\varepsilon_1',\dots,\varepsilon_{d-j-1}'$ be the sequence for the pair $(m+p^{d-j-1},d-j-1)$.
	    
	    Case 1: If $j<d-1$, we have
	    \begin{align*}
	        &pf_{m+p^{d-j},d-j} - \frac{\varepsilon_{d-j}}{\varepsilon_{d-j-1}'} \beta^{p^{d-j}(p-1)}f_{m+p^{d-j-1},d-j-1}\\
	        &=\frac{1}{p^{\val_p(m+p^{d-j})-d+j}}D^0(y^{(m+p^{d-j})/p}) \\
	        &\hphantom{=}+ \frac1p\beta^{p^{d-j}(p-1)}\left(\varepsilon_1D^0(y^{m/p}\sigma^2v_{d-j})-\frac{\varepsilon_{d-j}}{\varepsilon_{d-j-1}'}D^0(y^{(m+p^{d-j-1})/p})\right)\\
	        &\hphantom{=}+\sum_{k=1}^{d-j-1}\frac{1}{p^{k+1}}\beta^{p^{d-j+1}-p^{d-j-k}}\left(\varepsilon_{k+1} - \frac{\varepsilon_{d-j}}{\varepsilon_{d-j-1}'}\varepsilon_k'\right)D^0(y^{m/p}\sigma^2 x_{p^{d-j-k}-1}).
	    \end{align*}
	    We shall show that this is integral, i.e. an element of $A_\ast\otimes\Gamma_\ast$, possibly except for the first term. For the second term, we have
	    \[
	        D(y^{(m+p^{d-j-1})/p}) \equiv \delta_{d-j-2} D(y^{m/p}\sigma^2 v_{d-j})\pmod p,
	    \]
	    so that we need to show
	    \[
	        \varepsilon_1 - \frac{\varepsilon_{d-j}}{\varepsilon_{d-j-1}'}\delta_{d-j-2} \equiv 0\pmod p.
	    \]
	    This is true since we have
	    \[
	        \varepsilon_1 - \frac{\varepsilon_{d-j}}{\varepsilon_{d-j-1}'}\delta_{d-j-2} \equiv\varepsilon_1 + \frac{\varepsilon_1}{\varepsilon_{1}'}\delta_{d-j-2}\pmod p
	    \]
	    by Lemma \ref{lem:epsp} and
	    \[
	        \varepsilon_1' + \delta_{d-j-2} = \delta_{d-j-2}\left(-\frac{m+p^{d-j-1}}{p^{d-j-1}}+1\right) \equiv 0\pmod p
	    \]
	    by Lemma \ref{lem:luc}. Next, for the summation part, we wish to show that
	    \[
	        \varepsilon_{k+1} \equiv \frac{\varepsilon_{d-j}}{\varepsilon_{d-j-1}'}\varepsilon_k'\pmod{p^{k+1}},
	    \]
	    and this follows from Lemma \ref{lem:epsp}. Therefore, we have proved that
	    \begin{align}\label{eq:extb}
	        &pf_{m+p^{d-j},d-j} - \frac{\varepsilon_{d-j}}{\varepsilon_{d-j-1}'} \beta^{p^{d-j}(p-1)}f_{m+p^{d-j-1},d-j-1} \\
	        &= \frac{1}{p^{\val_p(m+p^{d-j})-d+j}}D^0(y^{(m+p^{d-j})/p}) + (\text{integral element}).
	    \end{align}
	    We divide into $3$ cases.
	    
	    Case 1-1: If $\val_p(m+p^{d-j})=d-j$, then the whole right-hand side of \eqref{eq:extb} is integral. In this case, we see that
	    \begin{align*}
	        pg_{m+p^{d-j},d-j} - \frac{\varepsilon_{d-j}}{\varepsilon_{d-j-1}'} g_{m+p^{d-j-1},d-j-1}
	    \end{align*}
	    is integral so that by taking $D^1$, we have
	    \[
	        ph_{m,j} = h_{m,j+1}
	    \]
	    in $\Ext^2_{\Gamma_\ast}(A_\ast)$ up to a $p$-adic unit.
	    
	    Case 1-2: If $m=p^d(p-1)$ and $j=0$, then from the proof of Theorem \ref{thm:bbstn}.(a), 
	    \[
	        \frac{D^0(y^{p^d})}{p} = \delta_d\beta^{p^{d+2}-p^{d+1}}\frac{\sigma^2 t_{e+1}}p + \text{(integral element)}.
	    \]
	    Therefore,
	    \[
	        pg_{p^{d+1},d} - \frac{\varepsilon_{d}}{\varepsilon_{d-1}'} g_{m+p^{d-1},d-1} - \delta_d\beta^{p^{d+2}-2p^{d+1}+2}\frac{\sigma^2 t_{e+1}}{p}
	    \]
	    is integral, and by taking $D^1$, we obtain
	    \[
	        ph_{p^d(p-1),0} = h_{p^d(p-1),1}.
	    \]
	    
	    Case 1-3: The remaining case is when $\val_p(m+p^{d-j})>d-j$ but not $(m,j)\neq(p^d(p-1),0)$. This can happen only if $j=0$. In this case, we have
	    \[
	        f_{m+p^d,d+1} = \frac{1}{p^{\val_p(m+p^d)-d}}D^0(y^{(m+p^d)/p}) + \beta^{p^{d+2}-p^{d+1}}\times\text{(cycle)}
	    \]
	    where (cycle) means some cycle in $A_\ast\otimes\Gamma_\ast[p^{-1}]$. Comparing with \eqref{eq:extb}, we see that
	    \[
	        pg_{m+p^d,d} - \frac{\varepsilon_{d}}{\varepsilon_{d-1}'}g_{m+p^{d-1},d-1} - \beta^{p^{d+2}-p^{d+1}} g_{m+p^d,d+1} + \beta^{p^{d+2}-2p^{d+1}+2} \times\text{(cycle)}
	    \]
	    is integral. Taking $D^1$, we have
	    \[
	        ph_{m,0} = h_{m,1} + \beta^{p^{d+2}-p^{d+1}}h_{m-p^d(p-1),d'-d-1}
	    \]
	    where $d'=\val_p(m-p^d(p-1))$. This completes the proof when $j<d-1$.
	    
	    Case 2: If $j=d-1$, then we have
	    \begin{align*}
	        &pf_{m+p,1} - c\beta^{p(p-1)}f_{m+1,0} \\
	        &= \frac{1}{p^{\val_p(m+p)-1}} D^0(y^{(m+p)/p}) \\
	        &\hphantom{==}+ \frac1p\beta^{p(p-1)}\left(\varepsilon_1D^0(y^{m/p}\sigma^2 x_{p-1})-cD^0((\sigma^2 x_{p-1})^{m+1})\right)
	    \end{align*}
	    and we can choose a $p$-adic unit $c$ so that
	    \[
	    \frac1p\beta^{p(p-1)}\left(\varepsilon_1D^0(y^{m/p}\sigma^2 x_{p-1})-cD^0((\sigma^2 x_{p-1})^{m+1})\right)
	    \]
	    is $\beta^{2p^2-3p}$ times a cycle by the definition of $y$. The rest of the argument is similar to the previous cases.
	    
	    Case 3: For $j=d$, we have
	    \[
	        pf_{m+1,0} = \frac{1}{p^{\val_p(m+1)}}D^0((\sigma^2 x_{p-1})^{m+1})
	    \]
	    and the rest of the argument is similar to the previous cases.
	    
	    This completes the proof of the description of $\Ext_{\Gamma_\ast}(A_\ast)$. The rest of the statement immediately follows. 
	\end{proof}
	
	\section{Integral Homotopy via Fracture Square}
	
	In this section, we shall compute the homotopy groups of $\THH(\ku)$, thereby proving \ref{thm:main}, by assembling the $p$-complete homotopy groups of $\THH(\ku)$. Recall that $\THH(\ku)$ splits into a direct sum
	\[
	    \THH(\ku) = \ku \oplus \overline{\THH(\ku)}.
	\]
	Let $X=\overline{\THH(\ku)}$. Then, we can compute the homotopy groups of $X$ using the arithmetic fracture square
	\[
	    \begin{tikzcd}
	        X\ar[r]\ar[d]&\prod_{p}X_p^\wedge\ar[d]\\
	        X\otimes\mathbb Q\ar[r]&\left(\prod_{p}X_p^\wedge\right)\otimes\mathbb Q.
	    \end{tikzcd}
	\]
	which is a pullback. Equivalently, there is a cofiber sequence
	\[
	    X\to (X\otimes\mathbb Q)\oplus \left(\prod_{p}X_p^\wedge\right) \to \left(\prod_{p}X_p^\wedge\right)\otimes\mathbb Q.
	\]
	
	\begin{lemma}
	    The rational homotopy groups of $\THH(\ku)$ are
	    \[
	        \pi_\ast\left(\THH(\ku)\otimes\mathbb Q\right) = \mathbb Q[\beta]\oplus\Sigma^3\mathbb Q[\beta]
	    \]
	    as $\mathbb Q[\beta]$-modules. The second summand is generated by $\sigma b_1$ where $b_1$ denotes the image of $b_1\in\MU_\ast\MU$ in $\ku_\ast\ku$.
	\end{lemma}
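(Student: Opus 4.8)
The plan is to compute the rational homotopy groups of $\THH(\ku)$ directly, bypassing the $\beta$-Bockstein and $v_0$-Bockstein machinery, since rationalization drastically simplifies the algebra. Over $\mathbb Q$, the spectrum $\ku_\mathbb Q$ is formal and its homotopy ring is $\ku_\ast\otimes\mathbb Q=\mathbb Q[\beta]$. The key observation is that rationally the complex orientation $\MU\to\ku$ becomes a map of rational $\mathbb E_\infty$-rings, and rational THH is computed by ordinary (topological = algebraic, rationally) Hochschild homology. Concretely, I would use the identity $\THH(\ku)\otimes\mathbb Q=\THH(\ku_\mathbb Q)$ together with the fact that rationally $\THH$ of a polynomial $\mathbb Q$-algebra is an exterior algebra on the generators in degree one higher.

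First I would establish that $\ku_\mathbb Q$ is equivalent, as an $\mathbb E_\infty$-$\mathbb Q$-algebra, to the Eilenberg--MacLane ring spectrum on $\mathbb Q[\beta]$ with $|\beta|=2$; this is standard since rational spectra are equivalent to graded $\mathbb Q$-vector spaces and $\ku_\mathbb Q$ has even homotopy with trivial $k$-invariants. Then I would invoke the rational Hochschild--Kostant--Rosenberg theorem: for the smooth $\mathbb Q$-algebra $\mathbb Q[\beta]$ one has
\begin{equation*}
    \pi_\ast\THH(\ku_\mathbb Q)=\mathrm{HH}_\ast(\mathbb Q[\beta]/\mathbb Q)=\Lambda_{\mathbb Q[\beta]}(d\beta),
\end{equation*}
where $d\beta$ sits in homological degree one, hence total degree $3$. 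This already yields the additive answer $\mathbb Q[\beta]\oplus\Sigma^3\mathbb Q[\beta]$ as a $\mathbb Q[\beta]$-module, with the second summand generated by the suspension class $\sigma\beta$.

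The only substantive point is the identification of the generator of the second summand with $\sigma b_1$. Here I would use the descent spectral sequence rationally, or more directly Remark \ref{rmk:desus}: the class $\sigma b_1\in\THH_\ast(\ku)$ is the suspension of the image of $b_1\in\MU_\ast\MU$ in $\ku_\ast\ku$, and under $\MU\to\ku$ the generator $x_1$ maps to $\beta$ with $\eta_R(x_1)=x_1+b_1$, so $\sigma b_1$ is the suspension of $\beta$ in the appropriate sense. Since $\sigma\beta$ is precisely the HKR class $d\beta$ up to a scalar, and the rationalized suspension map is injective on the relevant indecomposable, $\sigma b_1$ generates the $\Sigma^3\mathbb Q[\beta]$ summand. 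I expect this bookkeeping between the HKR generator and the cobar-complex class $\sigma b_1$ to be the only place requiring care, though it is routine once the correspondence $x_1\mapsto\beta$, $\eta_R(x_1)=x_1+b_1$ is in hand.

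An alternative, purely spectral-sequence approach would run the rationalized descent spectral sequence of Theorem \ref{thm:mainp} and observe that after inverting all primes the torsion module $T(p)$ and the bounded denominators in $F$ disappear: $F\otimes\mathbb Q=\Sigma^3\mathbb Q[\beta]$ and $T\otimes\mathbb Q=0$, leaving exactly $\mathbb Q[\beta]\oplus\Sigma^3\mathbb Q[\beta]$. This route has the advantage of immediately matching the generator with the class detected by $\sigma^2 t_1$ (equivalently $\sigma b_1$) in the $E_2$-page, but it presupposes the main $p$-complete computation; for a self-contained rational statement the HKR argument above is cleaner, and I would present that as the primary proof.
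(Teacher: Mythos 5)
Your primary (HKR) argument is correct, and its computational core coincides with the paper's: both proofs come down to the facts that $\mathrm{HH}_\ast(\mathbb Q[\beta]/\mathbb Q)=\Lambda_{\mathbb Q[\beta]}(d\beta)$ with $d\beta$ in total degree $3$, and that the image of $b_1$ in $\pi_\ast(\ku\otimes\ku)\otimes\mathbb Q$ is $\eta_R\beta-\eta_L\beta$ (from $\eta_R(x_1)=x_1+b_1$), so that $\sigma b_1$ is, up to sign, the exterior generator. The packaging differs: the paper never identifies $\THH(\ku)\otimes\mathbb Q$ with a Hochschild homology spectrum. Instead it runs the rational B\"okstedt spectral sequence, whose $E_2$-page $\Lambda_{\mathbb Q[\beta]}(\sigma(\beta_1-\beta_2))$ is concentrated in HH-degrees $0$ and $1$ and hence degenerates, and then observes that $\sigma(\beta_1-\beta_2)$ detects $\sigma b_1$ rationally. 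Your route trades the spectral sequence for two standard inputs: $\mathbb E_\infty$-formality of $\ku_{\mathbb Q}$, and the identification of rational THH with HH (which holds because $H\mathbb Q$ is idempotent, so $\THH(\mathbb Q)=\mathbb Q$). On the first point, your justification via ``trivial $k$-invariants'' only gives an additive splitting; the clean argument is that $\mathbb Q[\beta]$ is the free rational $\mathbb E_\infty$-algebra on an even class, so a choice of $\beta\in\pi_2(\ku_{\mathbb Q})$ determines an $\mathbb E_\infty$-map $\mathbb Q[\beta]\to\ku_{\mathbb Q}$ that is an equivalence on homotopy. What your approach buys is the absence of any spectral sequence; what the paper's buys is fewer prerequisites, since the B\"okstedt spectral sequence exists with no formality hypothesis and degenerates here for trivial reasons.

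By contrast, the ``alternative'' route you sketch does not work as stated and is best dropped: Theorem \ref{thm:mainp} computes the $p$-complete object $\THH(\ku)_p^\wedge$, and rationalizing that output produces $\mathbb Q_p$-vector spaces, not the rational homotopy of the integral $\THH(\ku)$; moreover, Theorem \ref{thm:main} is itself deduced from the present lemma via the fracture square, so invoking the integral decomposition here would be circular. Your instinct to make the HKR argument the primary proof is the right one.
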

	\begin{proof}
	    The $E_2$-page of the B\"okstedt spectral sequence
	    \[
	        E_2 = \operatorname{HH}_\ast(\pi_\ast(\ku\otimes\mathbb Q)/\mathbb Q) \Rightarrow \pi_\ast\left(\THH(\ku)\otimes\mathbb Q\right)
	    \]
	    is
	    \[
	        E_2 = \Lambda_{\mathbb Q[\beta]}(\sigma(\beta_1-\beta_2))
	    \]
	    where $\beta_1,\beta_2$ are the two copies of $\beta$ in $\mathbb Q[\beta]\otimes_{\mathbb Q}\mathbb Q[\beta]$. The spectral sequence degenerates since there is no room for any differential, and the conclusion follows from the observation that $\sigma(\beta_1-\beta_2)$ detects $\sigma b_1$ rationally.
	\end{proof}
	
	\begin{proof}[Proof of Theorem \ref{thm:main}]
	    We shall prove that the map
	    \begin{equation}\label{eq:frac}
	         (X\otimes\mathbb Q)\oplus \left(\prod_{p}X_p^\wedge\right) \to \left(\prod_{p}X_p^\wedge\right)\otimes\mathbb Q
	    \end{equation}
	    in the arithmetic fracture square is a surjection on homotopy groups and compute the kernel.
	    
	    At even degrees, \eqref{eq:frac} is
	    \[
	        0\oplus\bigoplus_p T(p) \to 0
	    \]
	    so that the even homotopy group of $X$ is $\bigoplus_pT(p)$.
	    
	    Next, we note that in the expression
	    \[
	        \THH_\ast(\ku)_p^\wedge = \mathbb Z_p[\beta]\oplus F_p^\wedge\oplus T(p)
	    \]
	    of Theorem \ref{thm:mainp}, the generator of $\THH_3(\ku)_p^\wedge$, i.e. the lowest degree generator of $F_p^\wedge$, is $\sigma b_1$. This follows from the proof of Theorem \ref{thm:mainp} in Section \ref{ssc:betaep} combined with Remark \ref{rmk:desus}.
	    
	    Then, at an odd degree, say $2k+3$, \eqref{eq:frac} is
	    \[
	        \mathbb Q \oplus \prod_p\mathbb Z_p\to\left(\prod_p\mathbb Z_p\right)\otimes\mathbb Q
	    \]
	    where $\mathbb Q$ on the left-hand side is generated by $\beta^k\sigma b_1$ and the $\mathbb Z_p$'s on both sides are generated by $(\beta^k/f(k))\sigma b_1$. This is the fracture square for the ordinary ring $\mathbb Q$. Therefore, it is surjective and the kernel is $\mathbb Z$ generated by $(\beta^k/f(k))\sigma b_1$.
	\end{proof}
	
	\section{Further Questions}\label{sec:FQ}
	    It would be interesting if the descent spectral sequence for $\THH(R)\to\THH(R/\MU)$ degenerates for more general $R$. When $R=\BP\langle n\rangle$ is a $\mathbb E_3$-$\MU$-algebra, \cite[Prop. 6.1.6]{HW} implies that $E_2^{s,t}(\THH(R))$ is concentrated in $0\leq s\leq n+1$, so that the degeneracy is not immediate as in Corollary \ref{cor:deg}. However, since the descent spectral sequence for $R=\MU$ degenerates at the $E_2$-page, the following conjecture would imply the degeneracy for $R=\BP\langle n\rangle$. Note that the assumption that $R$ is a $\mathbb E_3$-$\MU$-algebra is needed to ensure that $\THH_\ast(R/\MU)$ is a commutative ring.
	    
        \begin{conjecture}
            Suppose that we have $\BP\langle n\rangle$ with an $\mathbb E_3$-$\MU$-algebra structure, which exists by \cite{HW}. Let $E_2(\THH(\MU))$ and $E_2(\THH(\BP\langle n\rangle))$ be the $E_2$-page of the descent spectral sequence for $\THH(\MU)\to\THH(\MU/\MU)$ and $\THH(\BP\langle n\rangle)\to\THH(\BP\langle n\rangle/\MU)$. Then, 
            \[
            E_2^{s,t}(\THH(\MU))\to E_2^{s,t}(\THH(\BP\langle n\rangle))
            \]
            is surjective for $0\leq s\leq n$.
        \end{conjecture}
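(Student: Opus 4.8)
The plan is to reduce the conjecture to a statement about generation by primitive elements, and then to attack that statement by running the iterated Bockstein argument of the body of the paper one height higher. First I would record the two $E_2$-pages as bigraded algebras. Since the descent spectral sequence for $\THH(\MU)\to\THH(\MU/\MU)$ degenerates and $\THH_\ast(\MU)=\Lambda_{\MU_\ast}(\sigma b_1,\sigma b_2,\dots)$ with each $\sigma b_i$ detected in cohomological filtration $1$ (Remark \ref{rmk:desus}), the multiplicative structure (Remark \ref{rmk:mul}) identifies
\[
E_2(\THH(\MU))=\Ext_{\Gamma_\ast}(\MU_\ast)=\Lambda_{\MU_\ast}(\sigma^2 b_1,\sigma^2 b_2,\dots),
\]
an exterior algebra on the primitives $\sigma^2 b_i\in E_2^{1}$ (this is $\operatorname{Cotor}$ of the divided-power Hopf algebra $\Gamma_\ast$, and is exterior at every prime). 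The comparison map is a map of bigraded algebras carrying each $\sigma^2 b_i$ to its image $\overline{\sigma^2 b_i}\in E_2^1(\THH(\BP\langle n\rangle))$, again by Remark \ref{rmk:desus}. Using the edge identification $E_2^0(\THH(\BP\langle n\rangle))=\BP\langle n\rangle_\ast$, onto which $\MU_\ast=E_2^0(\THH(\MU))$ surjects, the image of the comparison map is exactly the sub-$\BP\langle n\rangle_\ast$-algebra generated by the $\overline{\sigma^2 b_i}$. The conjecture is therefore equivalent to the assertion that this primitively generated subalgebra contains all of $E_2^{s}(\THH(\BP\langle n\rangle))$ for $0\le s\le n$.

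Next I would set up the chain of Bockstein spectral sequences computing $\Ext_{\Gamma_\ast}(A_\ast)$ from the mod $(p,v_1,\dots,v_n)$ input
\[
\Ext_{\Gamma_\ast}(A_\ast/(p,v_1,\dots,v_n))=\mathbb F_p[\mu]\otimes_{\mathbb F_p}\Lambda_{\mathbb F_p}(\lambda_1,\dots,\lambda_{n+1}),
\]
the analogue of Proposition \ref{prop:mainp} that underlies the bound $0\le s\le n+1$ of \cite[Prop. 6.1.6]{HW}. Here the exterior generators $\lambda_1,\dots,\lambda_{n+1}$ lie in cohomological filtration $1$ and are images of primitive classes $\sigma^2 b_j$, whereas $\mu$ lies in filtration $0$ but is \emph{not} in the image from $\MU$. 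The decisive structural observation, which I would isolate as the main lemma, is that every Bockstein differential raises the cohomological degree $s$ by exactly one, precisely as in Theorem \ref{thm:bbstn} and the $v_0$-Bockstein of Section \ref{ssec:v0b}. Since there are $n+1$ Bockstein stages (for $v_0=p,v_1,\dots,v_n$), a surviving class descended from a power of $\mu$ must climb $n+1$ filtrations, and I expect it to land precisely in the top filtration $s=n+1$; below that filtration every surviving class should be a $\BP\langle n\rangle_\ast$-combination of products of the $\overline{\sigma^2 b_i}$. This is exactly what happens for $n=1$: there the $\mu$-tower produces the torsion $E_2^2=\Sigma^2 T(p)$, while $E_2^1=\Sigma F_p^\wedge$ is generated as a $\mathbb Z_p[\beta]$-module precisely by the images of $\sigma^2 b_1$ and the $\sigma^2 t_e=\sigma^2 b_{p^e-1}$, whose internal degrees match the $p$-power jumps of the sequence $f$.

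To make this precise I would generalize Lemma \ref{lem:coaction} and Theorem \ref{thm:bbstn} to $\BP\langle n\rangle$: choose generators $x_i,b_i$ putting the coactions $\eta_R\sigma^2 v_k$ and $\eta_R\sigma^2 x_{p^i-1}$ into a normal form, and compute the successive $v_j$-Bockstein differentials on the $\mu$-power classes explicitly. Granting that the filtration-$1$ survivors are exactly the primitives $\overline{\sigma^2 b_1}$ and $\overline{\sigma^2 t_i}$ together with their $v_j$-divided analogues (reproducing the module generation of $E_2^1$ seen for $\ku$), multiplicativity of the comparison map then propagates surjectivity to products of primitives, hence to all of $E_2^s$ with $2\le s\le n$, provided those filtrations contain no genuinely new, $\mu$-generated class.

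The hard part will be exactly this last point: controlling the iterated Bockstein differentials well enough to prove that no non-primitive class survives below filtration $n+1$. The combinatorial bookkeeping of the structure constants (the analogues of the $\delta_k$ of Lemma \ref{lem:coaction} and the $\varepsilon_k$ of \eqref{eq:eps}, governed by Lemmas \ref{lem:luc} and \ref{lem:epsp}) grows rapidly with $n$, and unlike the case $n=1$—where only the filtrations $s\le 1$ are needed and a single $\beta$-Bockstein suffices—one must now track interactions among $n+1$ separate Bockstein towers together with the cup products between the intermediate classes. I regard establishing that the $\mu$-tower survives only in the top filtration $s=n+1$, with all lower filtrations primitively generated over $\BP\langle n\rangle_\ast$, as the essential obstacle; the surjectivity asserted by the conjecture would then follow formally from the multiplicativity recorded in the first paragraph.
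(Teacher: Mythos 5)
First, a point of order: the paper does not prove this statement at all --- it appears in Section \ref{sec:FQ} precisely as an open conjecture --- so there is no proof of record to compare yours against, and your proposal has to stand on its own as an attempted resolution of an open problem. It does not yet do so. Your opening reduction is correct but essentially tautological: since the comparison map is a map of bigraded algebras and $E_2(\THH(\MU))$ is generated over $E_2^0=\MU_\ast$ by the filtration-one classes $\sigma^2 b_i$, its image is automatically the sub-$\BP\langle n\rangle_\ast$-algebra generated by the $\overline{\sigma^2 b_i}$, so ``surjective in filtrations $s\le n$'' and ``primitively generated in filtrations $s\le n$'' are the same sentence; no progress has been made at that point. (Also, the identification $E_2(\THH(\MU))=\Lambda_{\MU_\ast}(\sigma^2 b_1,\sigma^2 b_2,\dots)$, i.e.\ the Cotor of the divided-power Hopf algebra $\Gamma_\ast$ of Proposition \ref{prop:hopf}, is believable but is itself something you would need to prove integrally, together with the identification $E_2^0(\THH(\BP\langle n\rangle))=\BP\langle n\rangle_\ast$.)

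The genuine gap is the step you yourself flag as ``the essential obstacle,'' and it is not a technical loose end --- it is the entire content of the conjecture. Your ``decisive structural observation'' that Bockstein differentials raise the cohomological degree $s$ by exactly one is true but vacuous: every differential in these spectral sequences is induced by the cobar differential and so raises $s$ by one, and this by itself in no way prevents a torsion class of the shape $\mu^k\lambda_{i_1}\cdots\lambda_{i_s}$ from surviving in some filtration $s\le n$ without lying in the primitively generated subalgebra. In the case $n=1$, ruling this out is exactly Theorem \ref{thm:bbstn} together with the extension analysis of Section \ref{ssc:betaep} --- the construction of $y$, $f_{m,e}$, $g_{m,e}$, the constants $\delta_k$ and $\varepsilon_k$ of \eqref{eq:eps}, and the normal form of Lemma \ref{lem:coaction} --- which is the technical heart of the paper; for $n\ge 2$ you set up none of the analogous apparatus. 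Two further obstacles to the plan as stated: for $n\ge 2$ the spectrum $\BP\langle n\rangle$ is only known to admit an $\mathbb E_3$-$\MU$-algebra structure, whereas Proposition \ref{prop:hopf}, Lemma \ref{lem:dl}, and Lemma \ref{lem:coaction} are proved here for $\mathbb E_\infty$-algebras, so your proposed generalizations must be rebuilt on the $\mathbb E_3$ power-operation technology of \cite{HW}; and with $n+1$ Bockstein towers (for $p,v_1,\dots,v_n$) one must control differentials and hidden extensions that mix the different $v_j$-directions, an interaction that simply does not occur in the single $\beta$-Bockstein of the $n=1$ computation and for which you offer no mechanism. As it stands, the proposal is a plausible program --- indeed the one the paper implicitly suggests --- but not a proof.
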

        
        Theorem \ref{thm:bbstn}(a) shows that the conjecture is true for $\ku$ instead of $\BP\langle 1\rangle$, i.e. the map $E_2^{s,t}(\THH(\MU))\to E_2^{s,t}(\THH(\ku))$ is surjective for $0\leq s\leq 1$. Similar computations can be done to show that the conjecture is true for $0\leq s\leq 1$ and any $n$.
    \bibliographystyle{amsalpha}
    \bibliography{ref}

@article {MNN,
    AUTHOR = {Mathew, Akhil and Naumann, Niko and Noel, Justin},
     TITLE = {Nilpotence and descent in equivariant stable homotopy theory},
   JOURNAL = {Adv. Math.},
  FJOURNAL = {Advances in Mathematics},
    VOLUME = {305},
      YEAR = {2017},
     PAGES = {994--1084},
      ISSN = {0001-8708},
       DOI = {10.1016/j.aim.2016.09.027},
       URL = {https://doi.org/10.1016/j.aim.2016.09.027},
}

@incollection {JK,
    AUTHOR = {Joachim, Michael},
     TITLE = {Higher coherences for equivariant {$K$}-theory},
 BOOKTITLE = {Structured ring spectra},
    SERIES = {London Math. Soc. Lecture Note Ser.},
    VOLUME = {315},
     PAGES = {87--114},
 PUBLISHER = {Cambridge Univ. Press, Cambridge},
      YEAR = {2004},
       DOI = {10.1017/CBO9780511529955.006},
       URL = {https://doi.org/10.1017/CBO9780511529955.006},
}

@article {HW,
    AUTHOR = {Hahn, Jeremy and Wilson, Dylan},
     TITLE = {Redshift and multiplication for truncated {B}rown-{P}eterson
              spectra},
   JOURNAL = {Ann. of Math. (2)},
  FJOURNAL = {Annals of Mathematics. Second Series},
    VOLUME = {196},
      YEAR = {2022},
    NUMBER = {3},
     PAGES = {1277--1351},
      ISSN = {0003-486X,1939-8980},
       DOI = {10.4007/annals.2022.196.3.6},
       URL = {https://doi.org/10.4007/annals.2022.196.3.6},
}

@article {AHL,
    AUTHOR = {Angeltveit, Vigleik and Hill, Michael A. and Lawson, Tyler},
     TITLE = {Topological {H}ochschild homology of {$\ell$} and {$ko$}},
   JOURNAL = {Amer. J. Math.},
  FJOURNAL = {American Journal of Mathematics},
    VOLUME = {132},
      YEAR = {2010},
    NUMBER = {2},
     PAGES = {297--330},
      ISSN = {0002-9327},
       DOI = {10.1353/ajm.0.0105},
       URL = {https://doi.org/10.1353/ajm.0.0105},
}

@book {DGM,
    AUTHOR = {Dundas, Bj\o rn Ian and Goodwillie, Thomas G. and McCarthy,
              Randy},
     TITLE = {The local structure of algebraic {K}-theory},
    SERIES = {Algebra and Applications},
    VOLUME = {18},
 PUBLISHER = {Springer-Verlag London, Ltd., London},
      YEAR = {2013},
     PAGES = {xvi+435},
      ISBN = {978-1-4471-4392-5; 978-1-4471-4393-2},
}

@article {NS,
    AUTHOR = {Nikolaus, Thomas and Scholze, Peter},
     TITLE = {On topological cyclic homology},
   JOURNAL = {Acta Math.},
  FJOURNAL = {Acta Mathematica},
    VOLUME = {221},
      YEAR = {2018},
    NUMBER = {2},
     PAGES = {203--409},
      ISSN = {0001-5962},
       DOI = {10.4310/ACTA.2018.v221.n2.a1},
       URL = {https://doi.org/10.4310/ACTA.2018.v221.n2.a1},
}

@article {A-Kku,
    AUTHOR = {Ausoni, Christian},
     TITLE = {On the algebraic {$K$}-theory of the complex {$K$}-theory
              spectrum},
   JOURNAL = {Invent. Math.},
  FJOURNAL = {Inventiones Mathematicae},
    VOLUME = {180},
      YEAR = {2010},
    NUMBER = {3},
     PAGES = {611--668},
      ISSN = {0020-9910},
       DOI = {10.1007/s00222-010-0239-x},
       URL = {https://doi.org/10.1007/s00222-010-0239-x},
}

@article {BDRR,
    AUTHOR = {Baas, Nils A. and Dundas, Bj\o rn Ian and Richter, Birgit and
              Rognes, John},
     TITLE = {Ring completion of rig categories},
   JOURNAL = {J. Reine Angew. Math.},
  FJOURNAL = {Journal f\"{u}r die Reine und Angewandte Mathematik},
    VOLUME = {674},
      YEAR = {2013},
     PAGES = {43--80},
      ISSN = {0075-4102},
       DOI = {10.1515/crelle.2012.024},
       URL = {https://doi.org/10.1515/crelle.2012.024},
}

@article {A-THHku,
    AUTHOR = {Ausoni, Christian},
     TITLE = {Topological {H}ochschild homology of connective complex
              {$K$}-theory},
   JOURNAL = {Amer. J. Math.},
  FJOURNAL = {American Journal of Mathematics},
    VOLUME = {127},
      YEAR = {2005},
    NUMBER = {6},
     PAGES = {1261--1313},
      ISSN = {0002-9327},
       URL =
              {http://muse.jhu.edu/journals/american_journal_of_mathematics/v127/127.6ausoni.pdf},
}

@article {MS,
    AUTHOR = {McClure, J. E. and Staffeldt, R. E.},
     TITLE = {On the topological {H}ochschild homology of {$b{\rm u}$}. {I}},
   JOURNAL = {Amer. J. Math.},
  FJOURNAL = {American Journal of Mathematics},
    VOLUME = {115},
      YEAR = {1993},
    NUMBER = {1},
     PAGES = {1--45},
      ISSN = {0002-9327},
       DOI = {10.2307/2374721},
       URL = {https://doi.org/10.2307/2374721},
}

@article {Ainfty,
    AUTHOR = {Angeltveit, Vigleik},
     TITLE = {Topological {H}ochschild homology and cohomology of
              {$A_\infty$} ring spectra},
   JOURNAL = {Geom. Topol.},
  FJOURNAL = {Geometry \& Topology},
    VOLUME = {12},
      YEAR = {2008},
    NUMBER = {2},
     PAGES = {987--1032},
      ISSN = {1465-3060},
       DOI = {10.2140/gt.2008.12.987},
       URL = {https://doi.org/10.2140/gt.2008.12.987},
}

@book {green,
    AUTHOR = {Ravenel, Douglas C.},
     TITLE = {Complex cobordism and stable homotopy groups of spheres},
    SERIES = {Pure and Applied Mathematics},
    VOLUME = {121},
 PUBLISHER = {Academic Press, Inc., Orlando, FL},
      YEAR = {1986},
     PAGES = {xx+413},
      ISBN = {0-12-583430-6; 0-12-583431-4},
}

@article {Kochman,
    AUTHOR = {Kochman, Stanley O.},
     TITLE = {The homology of the classical groups over the {D}yer-{L}ashof
              algebra},
   JOURNAL = {Bull. Amer. Math. Soc.},
  FJOURNAL = {Bulletin of the American Mathematical Society},
    VOLUME = {77},
      YEAR = {1971},
     PAGES = {142--147},
      ISSN = {0002-9904},
       DOI = {10.1090/S0002-9904-1971-12637-X},
       URL = {https://doi.org/10.1090/S0002-9904-1971-12637-X},
}

@book {blue,
    AUTHOR = {Adams, J. F.},
     TITLE = {Stable homotopy and generalised homology},
    SERIES = {Chicago Lectures in Mathematics},
 PUBLISHER = {University of Chicago Press, Chicago, Ill.-London},
      YEAR = {1974},
     PAGES = {x+373},
}

@article {BMS2,
    AUTHOR = {Bhatt, Bhargav and Morrow, Matthew and Scholze, Peter},
     TITLE = {Topological {H}ochschild homology and integral {$p$}-adic
              {H}odge theory},
   JOURNAL = {Publ. Math. Inst. Hautes \'{E}tudes Sci.},
  FJOURNAL = {Publications Math\'{e}matiques. Institut de Hautes \'{E}tudes
              Scientifiques},
    VOLUME = {129},
      YEAR = {2019},
     PAGES = {199--310},
      ISSN = {0073-8301},
       DOI = {10.1007/s10240-019-00106-9},
       URL = {https://doi.org/10.1007/s10240-019-00106-9},
}

@article {AR-Hopf,
    AUTHOR = {Angeltveit, Vigleik and Rognes, John},
     TITLE = {Hopf algebra structure on topological {H}ochschild homology},
   JOURNAL = {Algebr. Geom. Topol.},
  FJOURNAL = {Algebraic \& Geometric Topology},
    VOLUME = {5},
      YEAR = {2005},
     PAGES = {1223--1290},
      ISSN = {1472-2747},
       DOI = {10.2140/agt.2005.5.1223},
       URL = {https://doi.org/10.2140/agt.2005.5.1223},
}

@article {AKCH,
    AUTHOR = {Angelini-Knoll, Gabriel and Culver, Dominic Leon and H\"oning,
              Eva},
     TITLE = {Topological {H}ochschild homology of truncated
              {B}rown-{P}eterson spectra, {I}},
   JOURNAL = {Algebr. Geom. Topol.},
  FJOURNAL = {Algebraic \& Geometric Topology},
    VOLUME = {24},
      YEAR = {2024},
    NUMBER = {5},
     PAGES = {2509--2536},
      ISSN = {1472-2747,1472-2739},
       DOI = {10.2140/agt.2024.24.2509},
       URL = {https://doi.org/10.2140/agt.2024.24.2509},
}

@article{HRW,
    AUTHOR = {Jeremy Hahn and Arpon Raksit and Dylan Wilson},
    TITLE = {A motivic filtration on the topological cyclic homology of commutative ring spectra},
   JOURNAL = {Ann. of Math. (2)},
  FJOURNAL = {Annals of Mathematics. Second Series},
     note = {to appear},
}

@article {LW,
    AUTHOR = {Liu, Ruochuan and Wang, Guozhen},
     TITLE = {Topological cyclic homology of local fields},
   JOURNAL = {Invent. Math.},
  FJOURNAL = {Inventiones Mathematicae},
    VOLUME = {230},
      YEAR = {2022},
    NUMBER = {2},
     PAGES = {851--932},
      ISSN = {0020-9910,1432-1297},
       DOI = {10.1007/s00222-022-01134-9},
       URL = {https://doi.org/10.1007/s00222-022-01134-9},
}

@article {KN,
    AUTHOR = {Krause, Achim and Nikolaus, Thomas},
     TITLE = {B\"{o}kstedt periodicity and quotients of {DVR}s},
   JOURNAL = {Compos. Math.},
  FJOURNAL = {Compositio Mathematica},
    VOLUME = {158},
      YEAR = {2022},
    NUMBER = {8},
     PAGES = {1683--1712},
      ISSN = {0010-437X,1570-5846},
       DOI = {10.1112/s0010437x22007655},
       URL = {https://doi.org/10.1112/s0010437x22007655},
}

@misc{decalage,
author={Benjamin Antieau},
title={Spectral sequences, d\'ecalage, and the {B}eilinson t-structure},
note={arXiv:2411.09115}
}
\end{document}